\newtheorem{theorem}{Theorem}[section]
\newtheorem{lemma}[theorem]{Lemma}
\newtheorem{corollary}[theorem]{Corollary}
\newtheorem{remark}[theorem]{Remark}
\newtheorem{proposition}[theorem]{Proposition}
\newtheorem{definition}[theorem]{Definition}
\newtheorem{example}[theorem]{Example}
\newtheorem{notation}[theorem]{Notation}
\newproof{proof}{Proof}
\numberwithin{equation}{section}
\numberwithin{theorem}{section}
\newcommand{\e}{\varepsilon}
\newcommand{\w}{\omega}
\newcommand{\IR}{\mathbb{R}}
\newcommand{\ff}{\mathbb{F}}
\newcommand{\IF}{\mathbb{F}}
\newcommand{\xxx}{\mathbf{x}}
\newcommand{\TTT}{\mathcal{T}}
\newcommand{\EE}{{\mathcal E}}
\newcommand{\Nn}{\mathcal{N}}
\newcommand{\LL}{\mathcal{L}}
\newcommand{\A}{\mathcal{A}}
\newcommand{\supp}{\mathrm{supp}}
\newcommand{\cl}{\mathrm{cl}}
\newcommand{\Ra}{\Rightarrow}
\newcommand{\LRa}{\Leftrightarrow}
\newcommand{\cacx}{\overline{\mathrm{acx}}}
\newcommand{\Bo}{\mathsf{Bo}}
\newcommand{\Id}{\mathsf{id}}
\newcommand{\spn}{\mathsf{span}}
\newcommand{\cspn}{\overline{\mathsf{span}}}
\newcommand{\SM}{{\setminus}}
\begin{document}

\begin{frontmatter}

\title{Limited type subsets  of locally convex spaces}

\author{Saak Gabriyelyan}
\ead{saak@math.bgu.ac.il}
\address{Department of Mathematics, Ben-Gurion University of the Negev, Beer-Sheva, P.O. 653, Israel}

\begin{abstract}
Let $1\leq p\leq q\leq\infty.$ Being motivated by the classical notions of limited, $p$-limited and coarse $p$-limited subsets of a Banach space, we introduce and study $(p,q)$-limited subsets and their equicontinuous versions and coarse $p$-limited subsets of an arbitrary locally convex space $E$. Operator characterizations of these classes are given. We compare these classes with the classes of bounded, (pre)compact, weakly (pre)compact  and relatively weakly sequentially (pre)compact sets. If $E$ is a Banach space, we show that the class of coarse $1$-limited subsets of $E$ coincides with the class of $(1,\infty)$-limited sets, and if $1<p<\infty$, then the class of coarse $p$-limited sets in $E$ coincides with the class of $p$-$(V^\ast)$ sets of Pe{\l}czy\'{n}ski. We also generalize a known theorem of Grothendieck.
\end{abstract}

\begin{keyword}
$(p,q)$-limited set \sep coarse $p$-limited set \sep $p$-$(V^\ast)$ set \sep $p$-convergent operator \sep $p$-barrelled space

\MSC[2010] 46A3 \sep 46E10

\end{keyword}

\end{frontmatter}


\section{Introduction}


Let $E$ be a locally convex space (lcs for short), and let $E'$ denote the topological dual of $E$.
For a bounded subset $A\subseteq E$ and a functional $\chi\in E'$, we put
\[
\|\chi\|_A:= \sup\big\{ |\chi(x)|:x\in A\cup\{0\}\big\}.
\]

\begin{definition} \label{def:limited-Banach-BD} {\em
A bounded subset $A$ of a Banach space $E$ is called {\em limited} if each weak$^\ast$ null sequence $\{ \chi_n\}_{n\in\w}$ in $E'$ converges to zero uniformly on $A$, that is $\lim_{n\to\infty} \|\chi_n\|_A =0$. Denote by $\mathsf{L}(E)$ the family of all limited subsets of $E$.\qed}
\end{definition}

Limited sets in Banach spaces were systematically studied by Bourgain and Diestel  \cite{BourDies}, see also Schlumprecht \cite{Schlumprecht-Ph}. Among other things they proved the following result (all relevant definitions are given in Section \ref{sec:pre}).
\begin{theorem}[\cite{BourDies}] \label{t:limited-BD}
Let $E$ be a Banach space. Then:
\begin{enumerate}
\item[{\rm(i)}] $\mathsf{L}(E)$ is closed  under taking subsets, finite sums and absolutely convex hulls;
\item[{\rm(ii)}] if $E$ contains no copy of $\ell_1$, then each $A\in\mathsf{L}(E)$ is relatively weakly compact;
\item[{\rm(iii)}] every $A\in\mathsf{L}(E)$ is weakly sequentially precompact;
\item[{\rm(iv)}] if $E$ is separable or reflexive, then each $A\in\mathsf{L}(E)$ is relatively compact.
\end{enumerate}
\end{theorem}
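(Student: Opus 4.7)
My plan breaks naturally into the four parts.

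Part (i) is a formal consequence of the definition. Passing to a subset $A'\subseteq A$ can only decrease $\|\chi\|_{A'}\le\|\chi\|_A$, so limitedness transfers. For sums, one has $\|\chi\|_{A+B}\le\|\chi\|_A+\|\chi\|_B$. For absolutely convex hulls the identity $\sup_{x\in\acx(A)}|\chi(x)|=\sup_{x\in A}|\chi(x)|$, valid for every linear $\chi$, transfers the uniform decay immediately.

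Parts (ii) and (iii) both rest on Rosenthal's $\ell_1$ theorem: every bounded sequence in a Banach space has a subsequence that is either weakly Cauchy or equivalent to the unit vector basis of $\ell_1$. For (iii) the plan is to rule out the $\ell_1$-alternative for sequences drawn from a limited set. If $\{x_n\}\subseteq A$ were $\ell_1$-equivalent, Hahn-Banach would produce uniformly bounded biorthogonal functionals on the closed $\ell_1$-copy, and a careful averaging (or decomposition through the quotient by a complement of that $\ell_1$-subspace) would yield weak$^\ast$ null $\{\chi_n\}\subseteq E'$ with $\chi_n(x_n)=1$, contradicting limitedness of $A$. Consequently, every sequence in $A$ must contain a weakly Cauchy subsequence, which is precisely weak sequential precompactness.

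For (ii), the absence of a copy of $\ell_1$ lets Rosenthal directly furnish, for any $\{x_n\}\subseteq A$, a weakly Cauchy subsequence (still denoted $\{x_n\}$). Define $\phi\in E''$ by $\phi(\chi)=\lim_n\chi(x_n)$; the goal is to show $\phi\in E$, after which Eberlein-\v{S}mulian finishes the job. If $\phi\notin E$, I would construct a weak$^\ast$ null sequence $\{\chi_n\}\subseteq E'$ with $\phi(\chi_n)\not\to 0$; this is absurd, since $\|\chi_n\|_A\to 0$ forces $|\phi(\chi_n)|=|\lim_k\chi_n(x_k)|\le\|\chi_n\|_A\to 0$. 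Producing such a $\{\chi_n\}$ is the delicate point, because in general non-separable duals need not detect elements of $E''\setminus E$ by weak$^\ast$ null sequences; the natural way out is to reduce to a separable closed subspace of $E$ containing $\{x_n\}$, whose dual ball is weak$^\ast$ metrizable, and then realise $\phi$ there.

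Part (iv) I intend to handle via the Gelfand-Phillips property in each case. In a reflexive space weak$^\ast$ and weak coincide on $E'$, and in a separable space $B_{E'}$ is weak$^\ast$ metrizable, so in both cases every bounded sequence in $E'$ admits a weak$^\ast$ convergent subsequence. Given $A\in\mathsf{L}(E)$ that fails to be relatively norm compact, choose $\{x_n\}\subseteq A$ with $\|x_n-x_{n+1}\|\ge\delta$ and, via Hahn-Banach, norm-one $\chi_n\in E'$ with $\chi_n(x_n-x_{n+1})\ge\delta/2$. Extract a weak$^\ast$ convergent subsequence $\chi_n\to\chi$, and further thin so that $\{\chi(x_n)\}$ is Cauchy. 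Then $\psi_n:=\chi_n-\chi$ is weak$^\ast$ null while $\psi_n(x_n-x_{n+1})\ge\delta/2-o(1)$, contradicting $\|\psi_n\|_A\to 0$. The principal technical hurdle across the whole scheme is the $\ell_1$-exclusion step in (iii) and the parallel construction of a weak$^\ast$ null witness in (ii): in both the biorthogonal functionals extracted by Hahn-Banach must be refined so as to be weak$^\ast$ null on all of $E$, not merely on the distinguished subspace, and this is where one expects to spend most of the effort.
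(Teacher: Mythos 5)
First, a point of orientation: the paper does not prove this statement at all --- it is quoted from Bourgain and Diestel \cite{BourDies} as background --- so there is no in-paper proof to measure yours against, and your proposal must stand on its own. Part (i) of your argument is complete and correct. Part (iv) is essentially the standard argument, but as written the step ``further thin so that $\{\chi(x_n)\}$ is Cauchy'' does not force $\chi(x_n-x_{n+1})\to 0$: after thinning, the consecutive index $n+1$ generally leaves the thinned set. The clean repair is to first extract a weakly Cauchy subsequence (using (iii)) and work with differences $x_{n_k}-x_{m_k}$, which are then automatically weakly null, so that the term $\chi(x_{n_k}-x_{m_k})$ vanishes in the limit.

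The genuine gaps are in (ii) and (iii), and they sit exactly at the points you flag as ``delicate'' without resolving them. For (iii): from an $\ell_1$-basic sequence $\{x_n\}\subseteq A$ you must produce a sequence that is weak$^\ast$ null \emph{on all of} $E$ with $|\chi_n(x_n)|$ bounded away from $0$. Neither of your proposed routes delivers this: the closed copy of $\ell_1$ need not be complemented in $E$ (so there is no ``quotient by a complement''), and Hahn--Banach extensions of the biorthogonal functionals are merely bounded --- nothing makes them, or any averages of them, weak$^\ast$ null on $E$. The standard resolution is operator-theoretic: the formal inclusion $\ell_1\hookrightarrow c_0$ is $2$-summing, hence Pietsch-factors through an injective $L_\infty(\mu)$-space; injectivity lets one extend to an operator $T:E\to c_0$ with $T(x_n)=e_n$, and then $\chi_n:=T^\ast(e_n^\ast)$ is the required weak$^\ast$ null sequence violating limitedness. (This is precisely the device the paper uses in Theorem \ref{t:coarse-p-lim-wsc}, via Lemma \ref{l:injective-Banach-lcs}, to prove the coarse $p$-limited generalization of (iii).) For (ii): your reduction to a separable subspace $Y\supseteq\{x_n\}$ only yields a weak$^\ast$ null witness in $Y'$, and two obstacles recur. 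Limitedness of $A$ in $E$ does \emph{not} pass to $Y$ (a weak$^\ast$ null sequence in $Y'$ need not extend to one in $E'$), so the contradiction $\|\psi_k\|_A\to 0$ is unavailable inside $Y$; and lifting the witness from $Y'$ to a weak$^\ast$ null sequence on all of $E$ is the same unproved extension problem as in (iii). Note also that the element $\phi\in E''$ you construct is automatically weak$^\ast$ sequentially continuous on $E'$, and in general (e.g.\ for Grothendieck spaces) such elements need not lie in $E$; so the hypothesis $\ell_1\not\hookrightarrow E$ must enter this step in an essential way that your sketch does not supply. Until these lifting/extension steps are carried out, (ii) and (iii) are not proved.
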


Let $E$ and $H$ be locally convex spaces. Denote by $\LL(E,H)$ the family of all operators from $E$ to $H$. If $p\in[1,\infty]$,  a sequence $\{x_n\}_{n\in\w}$ in $E$ is called {\em weakly $p$-summable} if  for every $\chi\in E'$ it follows that $(\langle\chi,x_n\rangle)\in \ell_p$ if $p\in[1,\infty)$, or $(\langle\chi,x_n\rangle)\in c_0$ if $p=\infty$. The family $\ell_p^w(E)$ (or $c_0^w(E)$ if $p=\infty$) of all weakly $p$-summable sequences in $E$ is a vector space which admits a natural locally convex vector topology such that it is complete if so is $E$, for details see Section 19.4 in \cite{Jar} or Section 4 in \cite{Gab-Pel}.

Let $p\in[1,\infty]$, and let $X$ and $Y$ be Banach spaces.  Generalizing the notion of completely continuous operators   Castillo and S\'{a}nchez  defined in \cite{CS} an operator $T:X\to Y$ to be {\em $p$-convergent} if $T$ sends weakly $p$-summable sequences of $X$ into norm null-sequences of $Y$. 
The influential article  of Castillo and S\'{a}nchez \cite{CS} inspired an intensive study of $p$-versions of numerous geometrical properties of Banach spaces. In particular, the following $p$-versions of limitedness were introduced by Karn and Sinha \cite{KarnSinha} and Galindo and Miranda \cite{GalMir}.

\begin{definition} \label{def:small-bounded-p} {\em
Let $p\in[1,\infty]$, and let $X$ be a Banach space. A  bounded subset $A$ of $X$ is called
\begin{enumerate}
\item[{\rm(i)}] a {\em $p$-limited set} if
\[
\big(\sup_{a\in A} |\langle\chi_n,a\rangle|\big)\in \ell_p \;\; (\mbox{or } \big(\sup_{a\in A} |\langle\chi_n,a\rangle|\big)\in c_0 \; \mbox{ if } p=\infty)
\]
for every $(\chi_n)\in \ell_p^w (X^\ast)$ (or $(\chi_n)\in c_0^w(X^\ast)$ if $p=\infty$) (\cite{KarnSinha});
\item[{\rm(ii)}]  a {\em coarse $p$-limited set} if for every $T\in\LL(E,\ell_p)$ $($or $T\in\LL(E,c_0)$ if $p=\infty$$)$, the set $T(A)$ is relatively compact (\cite{GalMir}).
\end{enumerate}}
\end{definition}
It turns out that the family $\mathsf{L}_p(X)$ of all $p$-limited subsets of $X$ and the family $\mathsf{CL}_p(X)$ of all coarse $p$-limited subsets of $X$ have similar properties as $\mathsf{L}(X)$ described in Theorem \ref{t:limited-BD}, see \cite{KarnSinha} and \cite{GalMir}, respectively.

Limited sets in Fr\'{e}chet spaces were studied by Alonso \cite{Alonso}. The notion of a limited set in general {\em locally convex spaces} was introduced by Lindstr\"{o}m and Schlumprecht in \cite{Lin-Schl-lim} and independently by Banakh and Gabriyelyan in  \cite{BG-GP-lcs}. Since limited sets in the sense of \cite{Lin-Schl-lim} are defined using {\em equicontinuity}, to distinguish both notions  we called them in  \cite{BG-GP-lcs} by $\EE$-limited sets.
\begin{definition} \label{def:limited-lcs} {\em
A subset   $A$ of a locally convex space $E$ is called
\begin{enumerate}
\item[{\rm(i)}] {\em $\EE$-limited } if $\|\chi_n\|_A\to 0$ for every equicontinuous weak$^\ast$ null sequence $\{\chi_n\}_{n\in\w}$ in  $E'$ (\cite{Lin-Schl-lim});
\item[{\rm(ii)}] {\em limited }  if $\|\chi_n\|_A\to 0$ for every weak$^\ast$ null sequence $\{\chi_n\}_{n\in\w}$ in  $E'$ (\cite{BG-GP-lcs}).\qed
\end{enumerate} }
\end{definition}
\noindent It is clear that if $E$ is a $c_0$-barrelled (for example, Banach) space, then $A$ is limited  if and only if it is $\EE$-limited.

Definitions \ref{def:limited-Banach-BD}, \ref{def:small-bounded-p}  and \ref{def:limited-lcs} and the notions of $(p,q)$-$(V^\ast)$ subsets and $(p,q)$-$(EV^\ast)$ subsets of a locally convex space $E$ introduced and studied in \cite{Gab-Pel} motivate the following notions.

\begin{definition}\label{def:p-limit-coarse-set}{\em
Let $p,q\in[1,\infty]$. A non-empty subset $A$ of a separated topological vector space $E$  is called
\begin{enumerate}
\item[{\rm(i)}]  a  {\em $(p,q)$-limited set} (resp., {\em $(p,q)$-$\EE$-limited set}) if
\[
\Big(\|\chi_n\|_A\Big)\in \ell_q \; \mbox{ if $q<\infty$, } \; \mbox{ or }\;\; \|\chi_n\|_A\to 0 \; \mbox{ if $q=\infty$},
\]
for every (resp., equicontinuous) weak$^\ast$ $p$-summable sequence $\{\chi_n\}_{n\in\w}$ in  $E'$. We denote by $\mathsf{L}_{(p,q)}(E)$ and $\mathsf{EL}_{(p,q)}(E)$ the family of all $(p,q)$-limited subsets and all $(p,q)$-$\EE$-limited subsets of $E$, respectively. $(p,p)$-limited sets and $(\infty,\infty)$-limited sets will be called simply {\em $p$-limited sets} and  {\em limited sets}, respectively. 
\item[{\rm(ii)}]  a {\em coarse $p$-limited set} if for every $T\in\LL(E,\ell_p) $ $($or $T\in\LL(E,c_0)$ if $p=\infty$), the set $T(A)$ is relatively compact. The family of all coarse $p$-limited sets is denoted by $\mathsf{CL}_p(E)$. \qed
\end{enumerate} }
\end{definition}

The purpose of the article is to study  $(p,q)$-limited subsets and  coarse $p$-limited subsets of locally convex spaces in the spirit of Theorem \ref{t:limited-BD} and the articles \cite{KarnSinha} and \cite{GalMir}.

Now we describe the content of the article. In Section \ref{sec:pre} we fix the main notions and some auxiliary results used in what follows.

In Section \ref{sec:limit-set} we study the classes  $\mathsf{L}_{(p,q)}(E)$ and $\mathsf{EL}_{(p,q)}(E)$. In Lemma \ref{l:limited-set-1} we generalize (i) of Theorem  \ref{t:limited-BD} and show that $\mathsf{L}_{(p,q)}(E)=\mathsf{EL}_{(p,q)}(E)=\{0\}$  if $q<p$. In Proposition \ref{p:product-sum-limited-set} we characterize $(p,q)$-limited subsets and $(p,q)$-$\EE$-limited subsets in products and direct sums of locally convex spaces. In Theorem  \ref{t:pq-limited-summing} we give an operator characterization of $(p,q)$-limited and $(p,q)$-$\EE$-limited subsets of the  locally convex space $E$. The following diagram easily follows from Definition \ref{def:p-limit-coarse-set} (see also (vi) of Lemma \ref{l:limited-set-1})
\[
\xymatrix{
\left.{\substack{\mbox{limited} \\ \mbox{$(p,q)$-limited}}}  \right\} \ar@{=>}[r]& \mbox{$(p,\infty)$-limited} \ar@{=>}[r]& \mbox{$(1,\infty)$-limited}.
}
\]
This diagram motivates the study of $(p,\infty)$-limited sets and $(1,\infty)$-limited sets. It is well known that any (pre)compact subset of a Banach space is limited. In Proposition \ref{p:precompact-p-limited} we generalize this useful result by showing that each precompact subset of an lcs $E$ is $(p,\infty)$-$\EE$-limited, and if in addition $E$ is $p$-barrelled, then every precompact subset of $E$ is $(p,\infty)$-limited. Consequently, each precompact subset of a $c_0$-barrelled space is limited.  In Theorem \ref{t:Cp-p-barreled} we show that every precompact subset of $C_p(X)$ (= the space $C(X)$ of all continuous functions over a Tychonoff space $X$ endowed with the pointwise topology) is  $(p,q)$-limited if and only if $X$ has no infinite functionally bounded subsets. As a corollary (see Example \ref{exa:compact-not-limited}) we obtain that the metrizable space $C_p([0,\w])$ has even compact subsets which are {\em not} limited. Being motivated by (iv) of Theorem \ref{t:limited-BD} it is natural to consider the case when every $(p,q)$-limited set is precompact. This problem is solved in Theorem \ref{t:limited-set-precompact}. In Proposition \ref{p:char-1-limited} we characterize $(1,\infty)$-limited subsets of barrelled locally convex spaces. In \cite{Grothen} (see also Theorem 3.11 of \cite{HMVZ}) Grothendieck proved that if $E$ is a Banach space, then a bounded subset $B$ of $\big(E',\mu(E',E)\big)$ is precompact if and only if it is limited. In Theorem \ref{t:weakly-mu-p-limited} we generalize this result.

In Section \ref{sec:coarse} we study coarse $p$-limited subsets of locally convex spaces. Generalizing Proposition 2 of  \cite{GalMir} we show in Lemma \ref{l:coarse-p-limited} that the family $\mathsf{CL}_p(E)$ of all coarse $p$-limited  sets in $E$ is closed under taking subsets, finite unions, closed absolutely convex hulls, and continuous linear images. In Proposition \ref{p:pV*-coarse-p-lim} we show that every $p$-limited subset of $E$ is  coarse $p$-limited (this generalizes Proposition 1 of \cite{GalMir}), and under addition assumption we prove that even every $(p,p)$-$(V^\ast)$ subset of $E$ is  coarse $p$-limited. A description of coarse $p$-limited subsets of direct products and direct sums is given in Proposition \ref{p:product-coarse-p-lim}.


The following class of linear maps is defined and studied in Section 16 of \cite{Gab-Pel}.

\begin{definition} \label{def:qp-summable} {\em
Let $1\leq p\leq q\leq \infty$, and let $E$ and $L$ be locally convex spaces. A linear map $T:E\to L$ is called {\em $(q,p)$-convergent} if it sends weakly $p$-summable sequences in $E$ to strongly $q$-summable sequences in $L$. \qed}
\end{definition}

Let $1\leq p\leq q\leq \infty$. In Section 16 of \cite{Gab-Pel} we naturally extend the notion of $p$-convergent operators between Banach spaces to the general case saying that a linear map $T:E\to L$ between locally convex spaces $E$ and $L$ is {\em $(q,p)$-convergent} if it sends weakly $p$-summable sequences in $E$ to strongly $q$-summable sequences in $L$ (so $(\infty,p)$-convergent operators are exactly $p$-convergent operators). The notion of $(q,p)$-convergent operators is useful to solve the following general problem: Characterize those operators $T$ which map {\em  all  bounded } sets into $(p,q)$-limited sets (or into coarse $p$-limited  sets). If $E$ and $L$ are Banach spaces and $p=q$, a partial answer to this problem is given by Ghenciu, see Theorem 14 of \cite{Ghenciu-pGP}. In Section \ref{sec:small-bound-p-conv} we give a complete answer to this problem, see  Theorem \ref{t:bounded-to-p-lim}. The clauses (ii)-(iv) of Theorem \ref{t:limited-BD} motivate the problem of finding conditions on a space $E$ under which $(p,q)$-limited sets and coarse $p$-limited  sets have additional topological properties. For $p$-limited subsets of Banach spaces this problem was considered by Ghenciu, see Theorem 15 of \cite{Ghenciu-pGP}. In Theorem \ref{t:qp*-convergent-Limited} we essentially generalize Ghenciu's result. In Theorem \ref{t:image-l1} we characterize coarse $1$-limited sets. As a consequence of the obtained results we show in  Corollary \ref{c:Banach-p-coarse-limited} that: (1) if $p=1$, then the class of coarse $1$-limited subsets of a Banach space $E$ coincides with the class of $(1,\infty)$-limited sets, and (2) if $1<p<\infty$, then the class of coarse $p$-limited sets in $E$ coincides with the class of $p$-$(V^\ast)$ sets. It should be mentioned that $p$-$(V^\ast)$ sets in Banach spaces were defined and study by Chen,  Ch\'{a}vez-Dom\'{\i}nguez and Li in \cite{CCDL} and \cite{LCCD}.
Using the idea of the proof of (iii) of Theorem \ref{t:limited-BD}, Galindo and Miranda proved in Proposition 3 of \cite{GalMir} that if $2\leq p<\infty$, then every coarse $p$-limited set is weakly sequentially precompact. In Theorem \ref{t:coarse-p-lim-wsc} we extend this result to locally convex spaces with the Rosenthal property.

The clause (iv) of Theorem \ref{t:limited-BD} implies that each separable or reflexive Banach space has the Gelfand--Phillips property. By this reason
generalizations of this clause will be given in the forthcoming article \cite{Gab-GP}.

\section{Preliminaries results} \label{sec:pre}


We start with some necessary definitions and notations used in the article. Set $\w:=\{ 0,1,2,\dots\}$.
All topological spaces are assumed to be Tychonoff (= completely regular and $T_1$). The closure of a subset $A$ of a topological space $X$ is denoted by $\overline{A}$, $\overline{A}^X$ or $\cl_X(A)$. A function $f:X\to Y$ between topological spaces $X$ and $Y$ is called {\em sequentially continuous} if for any convergent sequence $\{x_n\}_{n\in\w}\subseteq X$, the sequence $\{f(x_n)\}_{n\in\w}$ converges in $Y$ and $\lim_{n}f(x_n)=f(\lim_{n}x_n)$.
A subset $A$ of a topological space $X$ is called {\em functionally bounded in $X$} if every $f\in C(X)$ is bounded on $A$.

All topological vector spaces are over the field $\ff$ of real or complex numbers.  The closed unit ball of the field $\ff$ is denoted by $\mathbb{D}$.

Let $E$ be a locally convex space. The span of a subset $A$ of $E$ and its closure are denoted by $E_A:=\spn(A)$ and $\cspn(A)$, respectively. We denote by $\Nn_0(E)$ (resp., $\Nn_{0}^c(E)$) the family of all (resp., closed absolutely convex) neighborhoods of zero of $E$. The family of all bounded subsets of $E$ is denoted by $\Bo(E)$. The value of $\chi\in E'$ on $x\in E$ is denoted by $\langle\chi,x\rangle$ or $\chi(x)$. A sequence $\{x_n\}_{n\in\w}$ in $E$ is said to be {\em Cauchy} if for every $U\in\Nn_0(E)$ there is $N\in\w$ such that $x_n-x_m\in U$ for all $n,m\geq N$. It is easy to see that a sequence $\{x_n\}_{n\in\w}$ in  $E$ is Cauchy if and only if $x_{n_k}-x_{n_{k+1}}\to 0$ for every (strictly) increasing sequence $(n_k)$ in $\w$. If $E$ is a normed space, $B_E$ denotes the closed unit ball of $E$.

For an lcs $E$, we denote by $E_w$ and $E_\beta$ the space $E$ endowed with the weak topology $\sigma(E,E')$ and with the strong topology $\beta(E,E')$, respectively. The topological dual space $E'$ of $E$ endowed with weak$^\ast$ topology $\sigma(E',E)$ or with the strong topology $\beta(E',E)$ is denoted by $E'_{w^\ast}$ or $E'_\beta$, respectively. The closure of a subset $A$ in the weak topology is denoted by $\overline{A}^{\,w}$ or $\overline{A}^{\,\sigma(E,E')}$, and $\overline{B}^{\,w^\ast}$ (or $\overline{B}^{\,\sigma(E',E)}$) denotes the closure of $B\subseteq E'$ in the weak$^\ast$ topology. The {\em polar} of a subset $A$ of $E$ is denoted by
$
A^\circ :=\{ \chi\in E': \|\chi\|_A \leq 1\}. 
$
A subset $B$ of $E'$ is {\em equicontinuous} if $B\subseteq U^\circ$ for some $U\in \Nn_0(E)$.

A subset $A$ of a locally convex space $E$ is called
\begin{enumerate}
\item[$\bullet$] {\em precompact} if for every $U\in\Nn_0(E)$ there is a finite set $F\subseteq E$ such that $A\subseteq F+U$;
\item[$\bullet$] {\em sequentially precompact} if every sequence in $A$ has a Cauchy subsequence;
\item[$\bullet$] {\em weakly $($sequentially$)$ compact} if $A$ is (sequentially) compact in $E_w$;
\item[$\bullet$] {\em relatively weakly compact} if its weak closure $\overline{A}^{\,\sigma(E,E')}$ is compact in $E_w$;
\item[$\bullet$] {\em relatively weakly sequentially compact} if each sequence in $A$ has a subsequence weakly converging to a point of $E$;
\item[$\bullet$] {\em weakly sequentially precompact} if each sequence in $A$ has a weakly Cauchy subsequence.
\end{enumerate}
Note that each sequentially precompact subset of $E$ is precompact, by the converse is not true in general, see Lemma 2.2 of \cite{Gab-Pel}. 

In what follows we shall actively use the following classical completeness type properties and weak barrelledness conditions. A locally convex space  $E$
\begin{enumerate}
\item[$\bullet$] is {\em quasi-complete} if each closed bounded subset of $E$ is complete;
\item[$\bullet$] is {\em sequentially complete} if each Cauchy sequence in $E$ converges;
\item[$\bullet$] is {\em locally complete} if the closed absolutely convex hull of a null sequence in $E$ is compact;
\item[$\bullet$] ({\em quasi}){\em barrelled} if every $\sigma(E',E)$-bounded (resp., $\beta(E',E)$-bounded) subset of $E'$ is equicontinuous;
\item[$\bullet$] {\em $c_0$-}({\em quasi}){\em barrelled} if every $\sigma(E',E)$-null (resp., $\beta(E',E)$-null) sequence is equicontinuous.
\end{enumerate}
It is well known that $C_p(X)$ is quasibarrelled for every Tychonoff space $X$.

Recall that a locally convex space $(E,\tau)$ has the {\em Schur property} (resp., the {\em Glicksberg  property}) if $E$ and $E_w$ have the same convergent sequences (resp., the same compact sets). If an lcs $E$ has the  Glicksberg property, then it has the Schur property. The converse is true for strict $(LF)$-spaces (in particular, for Banach spaces), but not in general, see Corollary 2.13 and Proposition 3.5 of \cite{Gabr-free-resp}.
We shall use the next two lemmas.
\begin{lemma} \label{l:Schur-rel-seq-comp}
A locally convex space $(E,\tau)$ has the Schur property if and only if $E$ and $E_w$ have the same relatively sequentially compact sets.
\end{lemma}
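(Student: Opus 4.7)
My plan is to prove both directions by exploiting the standard fact that a sequence converges to a limit if and only if every subsequence has a further subsequence converging to that same limit.

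For the forward implication, I would assume $E$ has the Schur property. Since $\tau$ is finer than the weak topology $\sigma(E,E')$, every relatively $\tau$-sequentially compact subset of $E$ is automatically relatively sequentially compact in $E_w$. For the reverse inclusion I would take a relatively weakly sequentially compact set $A \subseteq E$ and an arbitrary sequence $(x_n) \subseteq A$; extracting a subsequence $(x_{n_k})$ weakly converging to some $x \in E$ and invoking the Schur property upgrades this weak convergence to $\tau$-convergence $x_{n_k} \to x$, which shows $A$ is relatively $\tau$-sequentially compact.

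For the converse implication, I would assume $E$ and $E_w$ have the same relatively sequentially compact sets and take an arbitrary weakly null sequence $(x_n)$ in $E$, aiming to show it is $\tau$-null. The set $A := \{x_n : n \in \w\} \cup \{0\}$ is relatively weakly sequentially compact (it is in fact weakly convergent), hence by hypothesis it is also relatively $\tau$-sequentially compact. Applied to an arbitrary subsequence of $(x_n)$, this produces a further subsequence converging in $\tau$ to some $y \in E$; because $\tau$-convergence implies weak convergence and the parent sequence converges weakly to $0$, I can identify $y = 0$. Thus every subsequence of $(x_n)$ has a further subsequence that is $\tau$-null, so $x_n \to 0$ in $\tau$ by the subsequence principle. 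Since a sequence $(y_n)$ converges weakly to $y$ in $E$ if and only if $(y_n - y)$ is weakly null, this gives the Schur property in its usual form.

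The only mild obstacle is in the converse: the hypothesis directly yields just subsequential $\tau$-convergence, so one must carefully pin down the limit as the known weak limit before promoting subsequential convergence to full-sequence convergence. Every other step is a straightforward comparison of topologies.
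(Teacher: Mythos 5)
Your proof is correct and takes essentially the same approach as the paper: the forward direction is the identical topology-comparison plus Schur upgrade, and your subsequence-principle argument for the converse is exactly the paper's argument in disguise, since the paper in effect re-proves that principle by assuming $a_n\not\to 0$, extracting a $\tau$-convergent subsequence lying outside a fixed neighborhood of $0$, and identifying its limit as $0$ via weak convergence to reach a contradiction.
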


\begin{proof}
Assume that $(E,\tau)$ has the Schur property. If $A$ is a relatively sequentially compact subset of $E$, then evidently $A$ is relatively sequentially compact in $E_w$. Conversely, let $A$ be a relatively sequentially compact subset of $E_w$. Take a sequence $S=\{ a_n\}_{n\in\w}$ in $A$. Then $S$ has a subsequence $\{ a_{n_k}\}_{k\in\w}$ weakly converging to a point $x\in E$. By the Schur property $a_{n_k}\to x$ in $\tau$. Hence $A$ is a relatively sequentially compact subset of $E$. Thus $E$ and $E_w$ have the same relatively sequentially compact sets. 

Assume that $E$ and $E_w$ have the same relatively sequentially compact sets. To show that $E$ has the Schur property, let $S=\{ a_n\}_{n\in\w}$ be a weakly null sequence. Then $S$ is relatively sequentially compact in $E_w$ and hence also in $E$.  We show that $a_n\to 0$ also in $E$. Suppose for a contradiction that there is a $U\in\Nn_0(E)$ such that $a_n\not\in U$ for each $n\in I$ for some infinite $I\subseteq\w$. Since $S$ is relatively sequentially compact in $E$, the sequence $S'=\{a_n\}_{n\in I}$  has a subsequence $\{ b_n\}_{n\in\w}$ converging to some point $b\in E$. As $\{ b_n\}_{n\in\w}$ is also weakly null we have $b=0$. But since $\{ b_n\}_{n\in\w}\subseteq S'$ it follows that $b_n\not\in U$ for every $n\in\w$; so $b_n\not\to b$, a contradiction.\qed
\end{proof}

\begin{lemma} \label{l:seq-precom-Schur}
Every weakly sequentially precompact subset $A$ of a Schur space $E$ is  sequentially precompact. If in addition $E$ is sequentially complete, then $E_w$ is sequentially complete.
\end{lemma}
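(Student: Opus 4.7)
The plan is to reduce both assertions to the handy characterization, stated just above in the excerpt, that a sequence $\{y_n\}_{n\in\w}$ in a locally convex space is Cauchy if and only if $y_{n_k}-y_{n_{k+1}}\to 0$ for every strictly increasing sequence $(n_k)$ in $\w$. The Schur property will be applied in the form: every weakly null sequence is null in the original topology.

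For the first assertion, I would start with an arbitrary sequence $\{a_n\}_{n\in\w}$ in $A$. Since $A$ is weakly sequentially precompact, some subsequence $\{a_{n_k}\}_{k\in\w}$ is weakly Cauchy. To show this subsequence is Cauchy in $E$, fix any strictly increasing $(k_j)\subseteq\w$ and consider $b_j:=a_{n_{k_j}}-a_{n_{k_{j+1}}}$. Because $\{a_{n_k}\}$ is weakly Cauchy, we have $\chi(b_j)\to 0$ for every $\chi\in E'$, i.e. $b_j\to 0$ in $E_w$. The Schur property then forces $b_j\to 0$ in $E$, and the cited characterization gives that $\{a_{n_k}\}$ is Cauchy in $E$. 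Hence $A$ is sequentially precompact.

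For the second assertion, assume additionally that $E$ is sequentially complete, and let $\{x_n\}_{n\in\w}$ be a Cauchy sequence in $E_w$. Exactly the same argument as in the first part, applied to $\{x_n\}$ in place of $\{a_{n_k}\}$, shows that $x_{n_k}-x_{n_{k+1}}\to 0$ in $E$ for every strictly increasing $(n_k)$, so $\{x_n\}$ is Cauchy in $E$. By sequential completeness of $E$, there exists $x\in E$ with $x_n\to x$ in the original topology, and hence also $x_n\to x$ in $E_w$. Therefore $E_w$ is sequentially complete.

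I do not anticipate a serious obstacle: the only nontrivial ingredients are the Cauchy characterization via consecutive differences and the Schur property, both of which are already at our disposal. The point worth being careful about is distinguishing the notion of a weakly Cauchy sequence (in the sense of a Cauchy sequence in $E_w$, which by definition means $\chi(x_n)$ is Cauchy in $\IF$ for every $\chi\in E'$) from a weakly null sequence; the bridge between them is precisely the subsequence-difference trick.
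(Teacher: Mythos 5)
Your proposal is correct and follows essentially the same route as the paper's proof: extract a weakly Cauchy subsequence, observe that consecutive differences along any strictly increasing index sequence are weakly null, apply the Schur property to conclude they are null in the original topology, and invoke the consecutive-difference characterization of Cauchy sequences; the second assertion is then the same argument plus sequential completeness. No gaps.
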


\begin{proof}
Let $\{x_n\}_{n\in\w}$ be a sequence in $A$. As $A$ is  weakly sequentially precompact, there is a subsequence  $\{y_n\}_{n\in\w}$ of  $\{x_n\}_{n\in\w}$ which is weakly Cauchy. Let $(n_k)$ be a strictly increasing sequence in $\w$. Then  $\{y_{n_{k+1}}-y_{n_{k}}\}_{n\in\w}$ is weakly null. By the Schur property of $E$, we obtain that $y_{n_{k+1}}-y_{n_{k}}\to 0$ in $E$. Thus $\{y_n\}_{n\in\w}$ is a Cauchy sequence in $E$, and hence $A$ is  sequentially precompact.

Assume that $E$ is in addition a sequentially complete space. Let $S=\{x_n\}_{n\in\w}$ be a weakly Cauchy sequence. As we proved above $S$ is a Cauchy sequence in $E$. Since $E$ is sequentially complete, there is $a\in E$ such that $x_n\to a$. Thus $x_n$ converges to $a$ also in the weak topology.\qed
\end{proof}

Two vector topologies $\tau$ and $\TTT$ on a vector space $L$ are called {\em compatible} if $(L,\tau)'=(L,\TTT)'$ algebraically. If $(E,\tau)$ is a locally convex space, then there is a finest locally convex vector topology $\mu(E,E')$  compatible with $\tau$. The topology $\mu(E,E')$ is called the {\em Mackey topology}, and if $\tau=\mu(E,E')$, the space $E$ is called a {\em Mackey space}. Set $E_\mu:=\big(E,\mu(E,E')\big)$. It is well known that any quasibarrelled space is Mackey.

Recall that an lcs $E$ is called {\em semi-reflexive} if the canonical map $J_E:E\to E''=(E'_\beta)'_\beta$ defined by  $\langle J_E(x),\chi\rangle:=\langle\chi,x\rangle$ ($\chi\in E'$) is an isomorphism; if in addition $J_E$ is a topological isomorphism, the space $E$ is called {\em reflexive}. Each reflexive space is barrelled.

We denote by $\bigoplus_{i\in I} E_i$ and $\prod_{i\in I} E_i$  the locally convex direct sum and the topological product of a non-empty family $\{E_i\}_{i\in I}$ of locally convex spaces, respectively. If $0\not= \xxx=(x_i)\in \bigoplus_{i\in I} E_i$, then the set $\supp(\xxx):=\{i\in I: x_i\not= 0\}$ is called the {\em support} of $\xxx$. The {\em support}  of a subset $A$, $\{0\}\subsetneq A$, of $\bigoplus_{i\in I} E_i$ is the set $\supp(A):=\bigcup_{a\in A} \supp(a)$. We shall also consider elements $\xxx=(x_i) \in \prod_{i\in I} E_i$ as functions on $I$ and write $\xxx(i):=x_i$.


Below we recall some of the basic classes of compact-type operators.

\begin{definition} \label{def:operators}
Let $E$ and $L$ be locally convex spaces. An operator $T\in \LL(E,L)$ is called {\em compact} (resp., {\em sequentially compact, precompact, sequentially precompact, weakly compact, weakly sequentially compact, weakly sequentially precompact, bounded}) if there is $U\in \Nn_0(E)$ such that $T(U)$ a relatively compact (relatively sequentially compact,  precompact, sequentially precompact,  relatively weakly compact, relatively weakly sequentially compact,  weakly sequentially precompact or bounded) subset of $E$. \qed
\end{definition}

Let $p\in[1,\infty]$. Then $p^\ast$ is defined to be the unique element of $ [1,\infty]$ which satisfies $\tfrac{1}{p}+\tfrac{1}{p^\ast}=1$. For $p\in[1,\infty)$, the space $\ell_{p^\ast}$  is the dual space of $\ell_p$. We denote by $\{e_n\}_{n\in\w}$ the canonical basis of $\ell_p$, if $1\leq p<\infty$, or the canonical basis of $c_0$, if $p=\infty$. The canonical basis of $\ell_{p^\ast}$ is denoted by $\{e_n^\ast\}_{n\in\w}$. 
Denote by  $\ell_p^0$ and $c_0^0$ the linear span of $\{e_n\}_{n\in\w}$  in  $\ell_p$ or $c_0$ endowed with the induced norm topology, respectively.
We shall use repeatedly the following well known description of relatively compact subsets of $\ell_p$ and $c_0$,  see \cite[p.~6]{Diestel}.
\begin{proposition} \label{p:compact-ell-p}
{\rm(i)} A bounded subset $A$ of $\ell_p$, $p\in[1,\infty)$, is relatively compact if and only if
\[
\lim_{m\to\infty} \sup\Big\{ \sum_{m\leq n} |x_n|^p : x=(x_n)\in A\Big\} =0.
\]
{\rm(ii)} A bounded subset $A$ of $c_0$ is relatively compact if and only if
$
\lim_{n\to\infty} \sup\{ |x_n|: x=(x_n)\in A\} =0.
$
\end{proposition}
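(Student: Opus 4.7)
The plan is to prove both statements by the same two-step strategy: necessity comes from a finite $\varepsilon$-net argument, and sufficiency from reducing to a finite-dimensional projection together with a uniform tail estimate. I will treat (i) in detail; (ii) is the routine modification obtained by replacing $p$-sums by suprema.

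For the necessity in (i), fix $\varepsilon>0$ and assume $A$ is relatively compact in $\ell_p$. Then $A$ has a finite $(\varepsilon/2)$-net $\{y^{(1)},\dots,y^{(N)}\}\subseteq \ell_p$. Since each $y^{(j)}\in\ell_p$, the tails $\sum_{n\geq m}|y^{(j)}_n|^p$ tend to $0$ as $m\to\infty$, so one can choose $M$ such that $\sum_{n\geq M}|y^{(j)}_n|^p<(\varepsilon/2)^p$ for all $j=1,\dots,N$. For any $x\in A$ pick $j$ with $\|x-y^{(j)}\|_p<\varepsilon/2$ and apply the triangle inequality to the tail sum on $[M,\infty)$ to obtain $\bigl(\sum_{n\geq M}|x_n|^p\bigr)^{1/p}<\varepsilon$, which is the required uniform tail decay.

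For the sufficiency in (i), I will show $A$ is totally bounded (hence relatively compact since $\ell_p$ is complete). Given $\varepsilon>0$, use the hypothesis to pick $m$ so that $\sup_{x\in A}\sum_{n\geq m}|x_n|^p<(\varepsilon/2)^p$. Let $\pi_m:\ell_p\to \ff^m$ be the projection onto the first $m$ coordinates. Because $A$ is bounded, $\pi_m(A)$ is a bounded subset of the finite-dimensional space $\ff^m$, hence totally bounded: cover it by finitely many balls of radius $\varepsilon/2$ centered at points $z^{(1)},\dots,z^{(k)}\in\ff^m$. Identifying each $z^{(j)}$ with the element of $\ell_p$ obtained by appending zeros, one checks that $\{z^{(1)},\dots,z^{(k)}\}$ is an $\varepsilon$-net for $A$ in $\ell_p$: indeed, for $x\in A$ with $\|\pi_m(x)-z^{(j)}\|_p<\varepsilon/2$ one has
\[
\|x-z^{(j)}\|_p^p=\|\pi_m(x)-z^{(j)}\|_p^p+\sum_{n\geq m}|x_n|^p<(\varepsilon/2)^p+(\varepsilon/2)^p\leq\varepsilon^p.
\]
This proves total boundedness and completes (i).

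For (ii), repeat the argument with $\|\cdot\|_\infty$ replacing the $p$-sum: necessity uses that coordinates of any $y\in c_0$ eventually satisfy $|y_n|<\varepsilon/2$, and sufficiency again reduces to a finite-dimensional projection $\pi_m(A)\subseteq \ff^m$ combined with the uniform sup-tail estimate. The main (and only) obstacle worth flagging is keeping the tail-splitting clean so that the triangle inequality in $\ell_p$-norm works with the right constants; once one fixes the scheme ``tail of $x$ $\leq$ tail of center $+$ tail of difference,'' both halves fall out immediately.
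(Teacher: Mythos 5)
Your proof is correct; the paper does not prove this proposition but simply cites it as a classical fact from Diestel's \emph{Sequences and Series in Banach Spaces} (p.~6), and your finite $\varepsilon$-net / finite-dimensional-projection argument is exactly the standard proof given there. The only point worth double-checking, which you handled correctly, is that $2(\varepsilon/2)^p\leq\varepsilon^p$ for $p\geq 1$, so the tail-splitting constants close up.
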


One of the most important classes of locally convex spaces is the class of free locally convex spaces introduced by Markov in \cite{Mar}. The {\em  free locally convex space}  $L(X)$ over a Tychonoff space $X$ is a pair consisting of a locally convex space $L(X)$ and  a continuous map $i: X\to L(X)$  such that every  continuous map $f$ from $X$ to a locally convex space  $E$ gives rise to a unique continuous linear operator $\Psi_E(f): L(X) \to E$  with $f=\Psi_E(f) \circ i$. The free locally convex space $L(X)$ always exists and is essentially unique, and $X$ is the Hamel basis of $L(X)$. So, each nonzero $\chi\in L(X)$ has a unique decomposition $\chi =a_1 i(x_1) +\cdots +a_n i(x_n)$, where all $a_k$ are nonzero and $x_k$ are distinct. The set $\supp(\chi):=\{x_1,\dots,x_n\}$ is called the {\em support} of $\chi$. In what follows we shall identify $i(x)$ with $x$ and consider $i(x)$ as the Dirac measure $\delta_x$ at the point $x\in X$. We also recall that $C_p(X)'=L(X)$ and $L(X)'=C(X)$. It is worth mentioning that $L(X)$ has the Glicksberg property for every Tychonoff space $X$, and if $X$ is non-discrete, then $L(X)$ is not a Mackey space, see \cite{Gab-Respected} and \cite{Gabr-L(X)-Mackey}, respectively.

Let $p\in[1,\infty]$. A sequence  $\{x_n\}_{n\in\w}$ in a locally convex space $E$ is called
\begin{enumerate}
\item[$\bullet$]  {\em weakly $p$-summable} if  for every $\chi\in E'$, it follows
\[
\mbox{$(\langle\chi, x_n\rangle)_{n\in\w} \in\ell_p$ if $p<\infty$, and  $(\langle\chi, x_n\rangle)_{n\in\w} \in c_0$ if $p=\infty$;}
\]
\item[$\bullet$] {\em weakly $p$-convergent to $x\in E$} if  $\{x_n-x\}_{n\in\w}$ is weakly $p$-summable;
\item[$\bullet$] {\em weakly $p$-Cauchy} if for each pair of strictly increasing sequences $(k_n),(j_n)\subseteq \w$, the sequence  $(x_{k_n}-x_{j_n})_{n\in\w}$ is weakly $p$-summable.
\end{enumerate}
%
A sequence  $\{\chi_n\}_{n\in\w}$ in $E'$ is called  {\em weak$^\ast$ $p$-summable} (resp., {\em weak$^\ast$ $p$-convergent to $\chi\in E'$} or  {\em weak$^\ast$ $p$-Cauchy})  if it is weakly $p$-summable (resp., weakly $p$-convergent to $\chi\in E'$ or weakly $p$-Cauchy) in $E'_{w^\ast}$.

The following weak barrelledness conditions introduced and studied in \cite{Gab-Pel} will play a considerable role in the article.
Let $p\in[1,\infty]$. A locally convex space $E$ is called
\begin{enumerate}
\item[$\bullet$]  {\em $p$-barrelled } if every weakly $p$-summable sequence in $E'_{w^\ast}$ is equicontinuous;
\item[$\bullet$] {\em $p$-quasibarrelled } if every weakly $p$-summable sequence in $E'_\beta$ is equicontinuous.
\end{enumerate}

We shall consider also the following linear map introduced in \cite{Gab-Pel}
\[
S_p:\LL(E,\ell_p)\to \ell_p^w(E'_{w^\ast}) \quad \big(\mbox{or $S_\infty: \LL(E,c_0)\to c_0^w(E'_{w^\ast})$ if $p=\infty$}\big)
\]
defined by $S_p(T):=\big(T^\ast(e^\ast_n)\big)_{n\in\w}$.

The following class of subsets of an lcs $E$ was introduced and studied in \cite{Gab-Pel}, and it generalizes the notion of $p$-$(V^\ast)$ subsets  of Banach spaces defined in \cite{CCDL}.
\begin{definition}\label{def:tvs-V*-subset}{\em
Let $p,q\in[1,\infty]$. A non-empty subset   $A$ of a locally convex space $E$ is called a  {\em $(p,q)$-$(V^\ast)$ set} (resp., a {\em $(p,q)$-$(EV^\ast)$ set}) if
\[
\Big(\sup_{a\in A} |\langle \chi_n, a\rangle|\Big)\in \ell_q \; \mbox{ if $q<\infty$, } \; \mbox{ or }\;\; \Big(\sup_{a\in A} |\langle \chi_n, a\rangle|\Big)\in c_0 \; \mbox{ if $q=\infty$},
\]
for every  (resp., equicontinuous) weakly $p$-summable sequence $\{\chi_n\}_{n\in\w}$ in  $E'_\beta$. $(p,\infty)$-$(V^\ast)$ sets and $(1,\infty)$-$(V^\ast)$ sets will be called simply {\em $p$-$(V^\ast)$ sets} and {\em $(V^\ast)$ sets}, respectively. Analogously, $(p,\infty)$-$(EV^\ast)$ sets and $(1,\infty)$-$(EV^\ast)$ sets will be called {\em $p$-$(EV^\ast)$ sets} and {\em $(EV^\ast)$ sets}, respectively. \qed}
\end{definition}
The family of all $(p,q)$-$(V^\ast)$ sets (resp. $p$-$(V^\ast)$ sets, $(p,q)$-$(EV^\ast)$ sets, $(V^\ast)$ sets etc.) of an lcs $E$ is denoted by $\mathsf{V}^\ast_{(p,q)}(E)$ (resp. $\mathsf{V}^\ast_{p}(E)$, $\mathsf{EV}^\ast_{(p,q)}(E)$, $\mathsf{V}^\ast(E)$ etc.).

Following \cite{Gab-Pel}, a non-empty subset $B$ of $E'$ is called a  {\em $(p,q)$-$(V)$ set} if
\[
\Big(\sup_{\chi\in B} |\langle \chi, x_n\rangle|\Big)\in \ell_q \; \mbox{ if $q<\infty$, } \; \mbox{ or }\;\; \Big(\sup_{\chi\in B} |\langle \chi, x_n\rangle|\Big)\in c_0 \; \mbox{ if $q=\infty$},
\]
for every weakly $p$-summable sequence $\{x_n\}_{n\in\w}$ in  $E$. $(p,\infty)$-$(V)$ sets and $(1,\infty)$-$(V)$ sets  will be called simply {\em $p$-$(V)$ sets} and {\em $(V)$ sets}, respectively.

Let $1\leq p\leq q\leq \infty$, and let $E$ and $L$ be locally convex spaces. Following \cite{Gab-Pel}, a linear map $T:E\to L$ is called {\em $(q,p)$-convergent} if it sends weakly $p$-summable sequences in $E$ to strongly $q$-summable sequences in $L$.

The following $p$-versions of weakly compact-type properties are defined in \cite{Gab-Pel} generalizing the corresponding notions in the class of Banach spaces introduced in \cite{CS} and \cite{Ghenciu-pGP}. Let $p\in[1,\infty]$. A subset   $A$ of a locally convex space $E$ is called
\begin{enumerate}
\item[$\bullet$] ({\em relatively}) {\em weakly sequentially $p$-compact} if every sequence in $A$ has a weakly $p$-convergent  subsequence with limit in $A$ (resp., in $E$);
\item[$\bullet$] {\em weakly  sequentially $p$-precompact} if every sequence from $A$ has a  weakly $p$-Cauchy subsequence.
\end{enumerate}

A Tychonoff space $X$  is called {\em Fr\'{e}chet--Urysohn} if for any cluster point $a\in X$ of a subset $A\subseteq X$ there is a sequence $\{ a_n\}_{n\in\w}\subseteq A$ which converges to $a$. A Tychonoff space $X$ is called an {\em angelic space} if (1) every relatively countably compact subset of $X$ is relatively compact, and (2) any compact subspace of $X$ is Fr\'{e}chet--Urysohn. Note that any subspace of an angelic space is angelic, and a subset $A$ of an angelic space $X$ is compact if and only if it is countably compact if and only if $A$ is sequentially compact, see Lemma 0.3 of \cite{Pryce}.

Let $p\in[1,\infty]$. Following \cite{Gab-DP}, a locally convex space $E$ is called a {\em weakly sequentially $p$-angelic space} if the family of all relatively weakly sequentially $p$-compact sets in $E$ coincides with the family of all relatively weakly compact subsets of $E$. The space $E$ is a {\em weakly $p$-angelic space}  if it is  a weakly sequentially $p$-angelic space and each weakly compact subset  of $E$ is Fr\'{e}chet--Urysohn.


\section{Limited-type sets in locally convex spaces } \label{sec:limit-set}


In the next lemma we summarize some basic elementary properties of $(p,q)$-limited sets, cf. (i) of Theorem \ref{t:limited-BD}. 
\begin{lemma} \label{l:limited-set-1}
Let $p,q\in[1,\infty]$, and let $(E,\tau)$ be a locally convex space. Then:
\begin{enumerate}
\item[{\rm(i)}] every $(p,q)$-limited set is $(p,q)$-$\EE$-limited; the converse is true if $E$ is a $p$-barrelled space;
\item[{\rm(ii)}] every $(p,q)$-$\EE$-limited set in $E$ is bounded;
\item[{\rm(iii)}] the family of all $(p,q)$-limited $($resp., $(p,q)$-$\EE$-limited$)$ sets in $E$ is closed under taking subsets, finite unions and sums, and closed absolutely convex hulls;
\item[{\rm(iv)}] the family of all $(p,q)$-limited $($resp., $(p,q)$-$\EE$-limited$)$ sets in $E$ is closed under taking continuous linear images; in particular,  if $H$ is a subspace of $E$, then every $(p,q)$-limited $($resp., $(p,q)$-$\EE$-limited$)$ set in $H$ is $(p,q)$-limited $($resp., $(p,q)$-$\EE$-limited$)$ in $E$;
\item[{\rm(v)}] a  subset $A$ of $E$ is a $(p,q)$-limited $($resp., $(p,q)$-$\EE$-limited$)$ set  if and only if every countable subset of $A$ is a  $(p,q)$-limited $($resp., $(p,q)$-$\EE$-limited$)$ set;
\item[{\rm(vi)}] if $p',q'\in[1,\infty]$ are such that $p'\leq p$ and $q\leq q'$, then every $(p,q)$-limited $($resp.,  $(p,q)$-$\EE$-limited$)$ set in $E$ is also $(p',q')$-limited $($resp.,  $(p',q')$-$\EE$-limited$)$; in particular, any $(p,q)$-limited $($resp., $(p,q)$-$\EE$-limited$)$ set is $(1,\infty)$-limited $($resp., $(1,\infty)$-$\EE$-limited$)$;
\item[{\rm(vii)}] the property of being a $(p,q)$-limited set depends only on the duality $(E,E')$, i.e.,   if $\TTT$ is a locally convex vector topology on $E$ compatible with the topology $\tau$ of $E$, then the  $(p,q)$-limited sets of $(E,\TTT)$ are exactly the $(p,q)$-limited sets of $(E,\tau)$;
\item[{\rm(viii)}] every $(p,q)$-limited $($resp., $(p,q)$-$\EE$-limited$)$ set in $E$ is a $(p,q)$-$(V^\ast)$ $($resp., $(p,q)$-$(EV^\ast)$$)$ set; the converse is true for semi-reflexive spaces;
\item[{\rm(ix)}] every $(p,q)$-limited subset of $E'_\beta$ is a $(p,q)$-$(V)$ set;
\item[{\rm(x)}] if $q<p$ and $A$ is a $(p,q)$-$\EE$-limited subset of $E$, then $A=\{0\}$;
\item[{\rm(xi)}] if $q\geq p$, then any finite subset of $E$ is $(p,q)$-limited;
\item[{\rm(xii)}]  a bounded subset $A$ of $E$ is $(p,q)$-limited $($resp., $(p,q)$-$\EE$-limited$)$ if and only if for every sequence $\{x_n\}_{n\in\w}$ in $A$ and each $($resp., equicontinuous$)$ weak$^\ast$ $p$-summable sequence $\{\chi_n\}_{n\in\w}$ in $E'$, it follows $\big(|\langle \chi_n, x_n\rangle|\big)\in \ell_q$ $($or $\in c_0$ if $p=\infty$$)$.
\end{enumerate}
\end{lemma}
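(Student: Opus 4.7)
The plan is to verify each of the twelve clauses directly from Definition~\ref{def:p-limit-coarse-set}, grouping them by the techniques involved. Most arguments are routine adaptations of the classical Banach-space proofs, driven by the polar identities $\|\chi\|_{A\cup B}\leq \|\chi\|_A+\|\chi\|_B$, $\|\chi\|_{A+B}\leq \|\chi\|_A+\|\chi\|_B$, $\|\chi\|_{\overline{\acx}(A)}=\|\chi\|_A$ (bipolar theorem), and $\|T^\ast\psi\|_A=\|\psi\|_{T(A)}$ for a continuous linear $T:E\to H$.

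First I would dispatch the near-immediate clauses. For (i), equicontinuity is just a restriction on the test sequences, while $p$-barrelledness guarantees that every weak$^\ast$ $p$-summable sequence is already equicontinuous. For (ii), testing against the equicontinuous weak$^\ast$ $p$-summable sequence $(\chi,0,0,\dots)$ for each $\chi\in E'$ shows $\|\chi\|_A<\infty$, so $A$ is weakly bounded and hence bounded. Clauses (iii) and (iv) follow from the polar identities above by an element-wise comparison of sequences in $\ell_q$. Clause (v) is a standard witness argument: if $A$ fails to be $(p,q)$-limited with witnessing sequence $\{\chi_n\}$, pick $x_n\in A$ with $|\chi_n(x_n)|\geq\tfrac12\|\chi_n\|_A$, and the countable set $\{x_n\}_{n\in\w}$ fails the property as well. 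Clause (vi) follows from the inclusions $\ell_{p'}\subseteq \ell_p$ and $\ell_q\subseteq \ell_{q'}$ together with the analogous $c_0$-replacements; (vii) is immediate since the definition depends only on the dual pair $(E,E')$; (xi) reduces to the fact that the maximum of finitely many $\ell_p$-sequences lies in $\ell_p\subseteq \ell_q$ when $p\leq q$; and (xii) is a diagonal reformulation, the converse direction using the same $\tfrac12$-trick employed in (v).

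For (viii) and (ix) the key observation is that a weakly $p$-summable sequence in $E'_\beta$ is automatically weak$^\ast$ $p$-summable (test against $J_E(x)$ for $x\in E$), which gives the direct implication in (viii); under semi-reflexivity, $(E'_\beta)'=J_E(E)$, so the two notions of $p$-summability on $E'$ coincide. For (ix), if $(x_n)$ is weakly $p$-summable in $E$, then $\{J_E(x_n)\}$ is weak$^\ast$ $p$-summable in $(E'_\beta)'$, and the identity $\|J_E(x_n)\|_A=\sup_{\chi\in A}|\chi(x_n)|$ transfers the $\ell_q$-control from a $(p,q)$-limited subset of $E'_\beta$ to a $(p,q)$-$(V)$ condition against sequences in $E$.

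The only clause requiring any real thought is (x). Given $q<p$, I would select positive scalars $(a_n)$ lying in $\ell_p$ (or in $c_0$ when $p=\infty$) but not in $\ell_q$ (not tending to zero when $q=\infty$); the explicit choice $a_n=n^{-s}$ with $1/p<s\leq 1/q$ covers the main sub-case, and the two boundary cases $p=\infty$ and $q=\infty$ are handled by direct constructions such as $a_n=n^{-1/q}$. For any $\chi\in E'$, choosing $U\in\Nn_0(E)$ with $\chi\in U^\circ$, the scaled sequence $\chi_n:=a_n\chi$ lies in $(\sup_n|a_n|)\cdot U^\circ$, hence is equicontinuous, and is weak$^\ast$ $p$-summable by construction. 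If $A\in\mathsf{EL}_{(p,q)}(E)$ then $(a_n\|\chi\|_A)\in\ell_q$ (or tends to zero if $q=\infty$), which forces $\|\chi\|_A=0$; since $\chi$ was arbitrary, the Hahn--Banach theorem yields $A=\{0\}$. The main obstacle I anticipate is simply the bookkeeping of the four boundary combinations of $p,q\in\{<\infty,=\infty\}$ in the construction of $(a_n)$, but each case proceeds with only minor notational changes.
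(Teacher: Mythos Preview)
Your argument for (ii) has a small but genuine gap in the case $q=\infty$. Testing against the sequence $(\chi,0,0,\dots)$ only imposes the condition $\|\chi_n\|_A\to 0$ as $n\to\infty$; since $\|\chi_n\|_A=0$ for all $n\geq 1$, this limit is $0$ regardless of whether $\|\chi_0\|_A=\|\chi\|_A$ is finite or not, so nothing is learned about the boundedness of $A$. The fix is immediate: replace the eventually-zero sequence by $\chi_n:=a_n\chi$ with a fixed strictly positive sequence $(a_n)\in\ell_p$ (or $\in c_0$ if $p=\infty$); this is still equicontinuous and weak$^\ast$ $p$-summable, and now the condition $a_n\|\chi\|_A\to 0$ (resp.\ $(a_n\|\chi\|_A)\in\ell_q$) forces $\|\chi\|_A<\infty$. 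Incidentally, in your discussion of (x) the parenthetical ``not tending to zero when $q=\infty$'' is vacuous: the hypothesis $q<p\leq\infty$ already excludes $q=\infty$.

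Apart from this, your proof is correct and essentially parallels the paper's. The one noteworthy difference is that for (ii) and (x) the paper does not argue directly but instead routes through (viii) together with the analogous results for $(p,q)$-$(EV^\ast)$ sets proved elsewhere (Lemma~7.2 and Proposition~7.5 of \cite{Gab-Pel}); your direct arguments, once (ii) is repaired as above, are more self-contained and arguably preferable here.
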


\begin{proof}
(i) and (iii) are clear, and (viii) follows from Definitions \ref{def:p-limit-coarse-set} and \ref{def:tvs-V*-subset}  and the trivial fact that every  (equicontinuous) weakly $p$-summable sequence $\{\chi_n\}_{n\in\w}$ in  $E'_\beta$ is (resp., equicontinuous)  weak$^\ast$ $p$-summable in  $E'$. The clause (ii) follows from (viii) and (ii) of Lemma 7.2 of \cite{Gab-Pel} (which states that every $(p,q)$-$(EV^\ast)$ set is bounded). 
(vii) follows from  the definition of $(p,q)$-limited sets, and (ix) follows from the easy fact that for every weakly $p$-summable sequence $\{x_n\}_{n\in\w}$ in  $E$, the sequence  $\{J_E(x_n)\}_{n\in\w}$ is weak$^\ast$ $p$-summable in $E''$.
\smallskip


(iv) Let $T:E\to L$ be an operator from $E$ to an lcs $L$, and let $A$ be a $(p,q)$-limited (resp., $(p,q)$-$\EE$-limited)  set in $E$. Observe that the adjoint map $T^\ast: L'_{w^\ast}\to E'_{w^\ast}$ is continuous. Fix a (resp., equicontinuous) weak$^\ast$ $p$-summable sequence $S=\{\chi_n\}_{n\in\w}$ in  $L'$. It is easily seen (see Lemma 4.5 of \cite{Gab-Pel}) 
that the sequence $\{T^\ast(\chi_n)\}$ is weak$^\ast$  $p$-summable in  $E'$. If in addition the sequence $S$  is equicontinuous, then its image $T^\ast(S)$ is equicontinuous as well.
Therefore
\[
\Big(\sup_{a\in A} |\langle \chi_n, T(a)\rangle|\Big)= \Big(\sup_{a\in A} |\langle T^\ast (\chi_n), a\rangle|\Big)\in \ell_q \; \mbox{ (or } \;  \in c_0 \; \mbox{ if $q=\infty$}).
\]
Thus $T(A)$ is a $(p,q)$-limited (resp., $(p,q)$-$\EE$-limited)  set in $L$.

The last assertion follows from the proved one applied to the identity embedding $T:H\to E$.
\smallskip

(v) The necessity follows from (iii). To prove the sufficiency suppose for a contradiction that $A$ is not a $(p,q)$-limited (resp., $(p,q)$-$\EE$-limited)  set in $E$. Then there is a (resp., equicontinuous)  weak$^\ast$ $p$-summable sequence $\{\chi_n\}_{n\in\w}$ in  $E'$ such that
\[
\Big(\sup_{a\in A} |\langle \chi_n, a\rangle|\Big)\not\in \ell_q \; \mbox{ if $q<\infty$, } \; \mbox{ or }\;\; \Big(\sup_{a\in A} |\langle \chi_n, a\rangle|\Big)\not\in c_0 \; \mbox{ if $q=\infty$}.
\]
Assume that $q<\infty$ (the case $q=\infty$ can be considered analogously). For every $n\in\w$, choose $a_n\in A$ such that $|\langle \chi_n, a_n\rangle|\geq\tfrac{1}{2} \cdot \sup_{a\in A} |\langle \chi_n, a\rangle|$. Then
\[
\sum_{n\in\w} |\langle \chi_n, a_n\rangle|^q \geq \tfrac{1}{2^q} \sum_{n\in\w} \big(\sup_{a\in A} |\langle \chi_n, a\rangle|\big)^q =\infty.
\]
Thus the countable subset $\{a_n\}_{n\in\w}$ of $A$ is not a $(p,q)$-limited (resp., $(p,q)$-$\EE$-limited)  set in $E$, a contradiction.
\smallskip

(vi) Take any  (resp., equicontinuous)  weak$^\ast$ $p'$-summable sequence $\{\chi_n\}_{n\in\w}$ in  $E'$. Since $p'\leq p$,  $\{\chi_n\}_{n\in\w}$  is also (resp., equicontinuous)  weak$^\ast$  $p$-summable and hence $\big( \sup_{a\in A} |\langle \chi_n, a\rangle|\big)\in \ell_q$ (or $\in c_0$ if $q=\infty$). It remains to note that $\ell_q\subseteq \ell_{q'}$ because $q\leq q'$.

(x) Let $q<p$ and $A$ be a $(p,q)$-$\EE$-limited subset of $E$. Then, by (viii), $A$ is a $(p,q)$-$(EV^\ast)$ set. Therefore, by Proposition 7.5 of \cite{Gab-Pel}, $A=\{0\}$.
%

(xi) By (iii) it suffices to show that $A=\{x\}$ is a $(p,q)$-limited set for every $x\in E$. Let $\{\chi_n\}_{n\in\w}$ be a weak$^\ast$ $p$-summable sequence in $E'_\beta$. Then $\big(\langle\chi_n,x\rangle\big)\in \ell_p$ (or $\in c_0$ if $p=\infty$). Since $p\leq q$ it follows that $\big(\sup_{x\in A}|\langle\chi_n,x\rangle|\big)\in \ell_q$ (or $\in c_0$ if $q=\infty$). Thus $A$ is a $(p,q)$-limited set.
\smallskip

(xii) The necessity is clear. To prove the sufficiency, for every $n\in\w$, choose $x_n\in A$ such that $|\langle \chi_n, x_n\rangle|\geq \tfrac{1}{2}\sup_{a\in A} |\langle \chi_n, a\rangle|$. By assumption, $\big(|\langle \chi_n, x_n\rangle|\big)\in \ell_q$ (or $\in c_0$ if $q=\infty$). Therefore also $\big(\sup_{a\in A} |\langle \chi_n, a\rangle|\big)\in \ell_q$  (or $\in c_0$ if $q=\infty$). Thus $A$ is a $(p,q)$-limited (resp., $(p,q)$-$\EE$-limited) set.\qed
\end{proof}
It follows from (x) and (xi) that there is sense to consider only the case when $1\leq p\leq q\leq\infty$.

\begin{notation} \label{n:limited-set} {\em
The family of all $(p,q)$-limited (resp., $p$-limited, limited, $(p,q)$-$\EE$-limited, $p$-$\EE$-limited, or $\EE$-limited) sets of an lcs $E$ is denoted by $\mathsf{L}_{(p,q)}(E)$ (resp., $\mathsf{L}_{p}(E)$, $\mathsf{L}(E)$, $\mathsf{EL}_{(p,q)}(E)$, $\mathsf{EL}_{p}(E)$, or $\mathsf{EL}(E)$).\qed}
\end{notation}

Below we characterize $(p,q)$-limited sets in products and direct sums.
\begin{proposition} \label{p:product-sum-limited-set}
Let  $1\leq p\leq q\leq\infty$, and let $\{E_i\}_{i\in I}$  be a non-empty family of locally convex spaces. Then:
\begin{enumerate}
\item[{\rm(i)}] a subset $K$ of $E=\prod_{i\in I} E_i$ is a  $(p,q)$-limited $($resp., $(p,q)$-$\EE$-limited$)$ set if and only if so are all its coordinate projections;
\item[{\rm(ii)}]  a subset $K$ of  $E=\bigoplus_{i\in I} E_i$  is a  $(p,q)$-limited  $($resp., $(p,q)$-$\EE$-limited$)$ set if and only if so are all its coordinate projections   and the support of $K$ is finite.
\end{enumerate}
\end{proposition}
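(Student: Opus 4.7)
The plan is to prove both parts by exploiting the duality $E'=\bigoplus_i E'_i$ for the product $E=\prod_i E_i$ and reducing the sum case (ii) to the product case (i). Both necessity statements are almost free: the continuous coordinate projection $\pi_i\colon E\to E_i$ together with Lemma \ref{l:limited-set-1}(iv) gives that each $\pi_i(K)$ inherits the $(p,q)$-limited or $(p,q)$-$\EE$-limited property. In (ii), a $(p,q)$-$\EE$-limited set is bounded by Lemma \ref{l:limited-set-1}(ii), and since bounded subsets of a locally convex direct sum have finite support (a standard property of the direct-sum topology), $\supp(K)$ must be finite.

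For the sufficiency direction of (i), the technical heart is the auxiliary claim that for every weak$^\ast$ $p$-summable sequence $\{\chi_n\}_{n\in\w}\subseteq E'$ the set $J:=\bigcup_n \supp(\chi_n)$ is finite. In the equicontinuous case this is immediate: a basic $U\in\Nn_0(E)$ has the form $U=\prod_{i\in F}U_i\times \prod_{i\notin F}E_i$ with $F$ finite, so $U^\circ\subseteq\bigoplus_{i\in F}E'_i$. In the general case I argue by contradiction. Set $T_n:=\supp(\chi_n)\setminus\bigcup_{m<n}\supp(\chi_m)$; the sets $T_n$ are pairwise disjoint and $J=\bigsqcup_n T_n$, so if $J$ is infinite then $N:=\{n:T_n\neq\emptyset\}$ is infinite. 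Pick $i_n\in T_n$ for each $n\in N$. Enumerating $N$ in increasing order and using that $(\chi_n)_{i_n}$ is a nonzero functional on $E_{i_n}$, I recursively choose $y_n\in E_{i_n}$ so that the vector $x\in E$ defined by $x_{i_n}=y_n$ ($n\in N$) and $x_i=0$ otherwise satisfies $\chi_n(x)=1$ for every $n\in N$. The recursion is consistent because $i_n\notin\supp(\chi_m)$ for $m<n$, so later choices never perturb earlier values of $\chi_m(x)$. The resulting $x$ violates $(\chi_n(x))_n\in\ell_p$ (or $c_0$ when $p=\infty$), a contradiction.

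Once $J$ is finite, for each $i\in J$ the sequence $\{(\chi_n)_i\}_n$ is weak$^\ast$ $p$-summable in $E'_i$ (test against vectors supported at the $i$-th coordinate), and, in the $\EE$-case, equicontinuous. Applying the hypothesis on $\pi_i(K)$ and the trivial estimate
\[
\sup_{a\in K}|\chi_n(a)| \le \sum_{i\in J}\sup_{a_i\in\pi_i(K)}|(\chi_n)_i(a_i)|,
\]
the right-hand side is a finite sum of sequences in $\ell_q$ (or $c_0$), hence belongs to $\ell_q$ (or $c_0$), proving (i). For the sufficiency in (ii), if $\supp(K)=F$ is finite, then $K$ lies in the subspace $\bigoplus_{i\in F}E_i=\prod_{i\in F}E_i$, so (i) applied to this finite product together with Lemma \ref{l:limited-set-1}(iv) for the continuous inclusion into $\bigoplus_i E_i$ finishes the proof. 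The main obstacle is clearly the non-equicontinuous half of the key claim: for merely weak$^\ast$ $p$-summable sequences the supports could a priori spread out, and the diagonal construction has to be arranged so that the triangular structure $i_n\notin\supp(\chi_m)$ ($m<n$) blocks any interference between later and earlier coordinates.
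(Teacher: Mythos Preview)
Your proof is correct and follows the same overall strategy as the paper: necessity via Lemma~\ref{l:limited-set-1}(iv) and boundedness, sufficiency via the fact that any weak$^\ast$ $p$-summable sequence in $E'=\bigoplus_i E'_i$ has finite total support, followed by the coordinatewise estimate $\sup_{a\in K}|\chi_n(a)|\le\sum_{i\in J}\sup_{a_i\in\pi_i(K)}|(\chi_n)_i(a_i)|$.

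There are two differences worth noting. First, the paper outsources the finite-support fact to Lemma~4.18 of \cite{Gab-Pel}, whereas you supply a self-contained diagonal construction: choosing $i_n\in T_n=\supp(\chi_n)\setminus\bigcup_{m<n}\supp(\chi_m)$ guarantees the triangular non-interference $i_{n_j}\notin\supp(\chi_{n_k})$ for $j>k$, so the recursion producing $x$ with $\chi_n(x)=1$ on the infinite set $N$ goes through cleanly. This makes your argument more elementary and independent of the companion paper. Second, for part~(ii) the paper repeats the sufficiency computation inside $\bigoplus_i E_i$, again invoking Lemma~4.18 of \cite{Gab-Pel} for the coordinate sequences, while you reduce (ii) to (i) by observing that $K\subseteq\bigoplus_{i\in F}E_i=\prod_{i\in F}E_i$ and then pushing forward along the continuous inclusion via Lemma~\ref{l:limited-set-1}(iv); this is slightly cleaner and avoids duplicating work. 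Both routes yield the same result with comparable effort.
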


\begin{proof}
The necessity follows from (iv) of Lemma \ref{l:limited-set-1} because $E_i$ is a direct summand of $E$ and, for the case (ii), the well known fact that any bounded subset of a locally convex direct sum has finite support.
\smallskip

To prove the sufficiency, let $K$ be a subset of $E$ such that each projection $K_i$ of $K$ is a $(p,q)$-limited (resp., $(p,q)$-$\EE$-limited) set in $E_i$, and, for the case (ii), $K_i=\{0\}$ for all but finitely many indices $i\in I$. We distinguish between the cases (i) and (ii).
\smallskip

(i) Take an arbitrary (resp., equicontinuous) weak$^\ast$ $p$-summable sequence $\{\chi_n\}_{n\in\w}$ in $E'$, where $\chi_n=(\chi_{i,n})_{i\in I}$. By Lemma 4.18 of \cite{Gab-Pel}, 
the sequence  $\{\chi_n\}_{n\in\w}$ has finite support $F\subseteq I$ (i.e., $\chi_{i,n}=0$ for all $n\in\w$ and $i\in I\SM F$) and for every $i\in F$, each sequence $\{\chi_{i,n}\}_{n\in\w}$ is weak$^\ast$ $p$-summable in $E'_i$. If in addition $\{\chi_n\}_{n\in\w}$ is equicontinuous, then for every $i\in F$, the sequence $\{\chi_{i,n}\}_{n\in\w}\subseteq E'_i$ is also equicontinuous (indeed, if $T_i:E_i\to E$ is the identity embedding, then $\{\chi_{i,n}\}_n= \{ T^\ast_i(\chi_n)\}_n$ is equicontinuous). Then
\[
\sup_{x\in K} |\langle \chi_n, x\rangle|=\sup_{x\in K} \big|\sum_{i\in F} \langle \chi_{i,n}, x(i)\rangle\big|\leq \sum_{i\in F} \sup_{x(i)\in K_i} |\langle \chi_{i,n}, x(i)\rangle|.
\]
Since all $K_i$ are $(p,q)$-limited (resp., $(p,q)$-$\EE$-limited) sets, we have $\big( \sup_{x(i)\in K_i} |\langle \chi_{i,n}, x(i)\rangle|\big)  \in \ell_q$ (or $\in c_0$ if $q=\infty$). Therefore also  $\big( \sup_{x\in K} |\langle \chi_n, x\rangle|\big) \in \ell_q$ (or $\in c_0$ if $q=\infty$). Thus $K$ is  a $(p,q)$-limited (resp., $(p,q)$-$\EE$-limited)  set in $E$.
\smallskip

(ii) Let $F\subseteq I$ be the finite support of $K$. Take an arbitrary (resp., equicontinuous)  weak$^\ast$ $p$-summable sequence $\{\chi_n\}_{n\in\w}$ in $E'_\beta$, where $\chi_n=(\chi_{i,n})_{i\in I}$ with $\chi_{i,n}\in E'_i$. As in (i) above, if  $\{\chi_n\}_{n\in\w}$ is equicontinuous, then for every $i\in F$, the sequence $\{\chi_{i,n}\}_{n\in\w}\subseteq E'_i$ is also equicontinuous. Then, by Lemma 4.18 of \cite{Gab-Pel}, 
for every $i\in F$, the sequence $\{\chi_{i,n}\}_{n\in\w}$ is weak$^\ast$ $p$-summable in $E'_i$ and hence
\[
\sup_{x\in K} |\langle \chi_n, x\rangle|=\sup_{x\in K} \big|\sum_{i\in F} \langle \chi_{i,n}, x(i)\rangle\big|\leq \sum_{i\in F} \sup_{x(i)\in K_i} |\langle \chi_{i,n}, x(i)\rangle|.
\]
Since all $K_i$ are $(p,q)$-limited  (resp., $(p,q)$-$\EE$-limited) sets, we have $\big( \sup_{x(i)\in K_i} |\langle \chi_{i,n}, x(i)\rangle|\big)  \in \ell_q$ (or $\in c_0$ if $q=\infty$). Therefore also  $\big( \sup_{x\in K} |\langle \chi_n, x\rangle|\big) \in \ell_q$ (or $\in c_0$ if $q=\infty$). Thus $K$ is  a $(p,q)$-limited  (resp., $(p,q)$-$\EE$-limited) set in $E'$.\qed
\end{proof}



Let $A$ be a bounded subset of a locally convex space $E$. Then, by Proposition 16.10 of \cite{Gab-Pel}, 
 the map $T_A:\ell_1^0(A) \to E$ defined  by
\begin{equation} \label{equ:L1-E}
T_A(\lambda_0 a_0+\cdots+\lambda_n a_n):=\lambda_0 a_0+\cdots+\lambda_n a_n \quad (n\in\w, \; \lambda_0,\dots,\lambda_n\in\IF, \; a_0,\dots,a_n\in A),
\end{equation}
is an operator. Now we characterize $(p,q)$-limited sets. 

\begin{theorem} \label{t:pq-limited-summing}
Let $1\leq p\leq q\leq\infty$, and let $E$ be a  {\rm(}resp., $p$-barrelled{\rm)} locally convex space. Then a bounded subset $A$ of  $E$ is a $(p,q)$-limited {\rm(}resp., $(p,q)$-$\EE$-limited{\rm)} set if and only if the adjoint operator $T_A^\ast: E'_{w^\ast}\to \ell_\infty(A)$ is $(q,p)$-convergent.
\end{theorem}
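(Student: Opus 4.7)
The plan is to reduce the statement to a direct unwinding of the definitions once we have correctly identified the dual of $\ell_1^0(A)$ and the action of the adjoint $T_A^\ast$.

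First I would fix notation for the dual pairing. The Banach space $\ell_1^0(A)$ (spanned by formal basis vectors $\{e_a\}_{a\in A}$ with the induced $\ell_1$-norm) has topological dual canonically isometric to $\ell_\infty(A)$, with the duality given by $\langle f, e_a\rangle = f(a)$ and the norm $\|f\|_\infty = \sup_{a\in A}|f(a)|$. Using the defining formula (\ref{equ:L1-E}) for $T_A$, which is an operator by Proposition 16.10 of \cite{Gab-Pel}, a short computation gives
\[
T_A^\ast(\chi)(a) = \langle \chi, T_A(e_a)\rangle = \langle\chi,a\rangle \qquad (\chi\in E',\; a\in A),
\]
so $T_A^\ast(\chi)\in\ell_\infty(A)$ is just the restriction of $\chi$ to $A$, and
\[
\bigl\|T_A^\ast(\chi)\bigr\|_\infty = \sup_{a\in A}|\langle\chi,a\rangle| = \|\chi\|_A.
\]

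Next I would translate the two halves of Definition \ref{def:qp-summable} applied to $T_A^\ast:E'_{w^\ast}\to\ell_\infty(A)$. Since $(E'_{w^\ast})'=E$, a sequence $\{\chi_n\}_{n\in\w}\subseteq E'$ is weakly $p$-summable in $E'_{w^\ast}$ if and only if it is weak$^\ast$ $p$-summable in $E'$. On the target side, $\ell_\infty(A)$ is a normed (hence strong = norm) space, so a sequence $\{y_n\}\subseteq\ell_\infty(A)$ is strongly $q$-summable precisely when $(\|y_n\|_\infty)_{n\in\w}\in\ell_q$ (or $\in c_0$ when $q=\infty$). Combining this with the identity $\|T_A^\ast(\chi_n)\|_\infty=\|\chi_n\|_A$ computed above, the statement ``$T_A^\ast$ is $(q,p)$-convergent'' becomes exactly the condition that $(\|\chi_n\|_A)_n\in\ell_q$ (or $\in c_0$) for every weak$^\ast$ $p$-summable sequence $\{\chi_n\}\subseteq E'$, which by Definition \ref{def:p-limit-coarse-set}(i) is precisely the assertion that $A$ is $(p,q)$-limited. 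This yields the first equivalence for arbitrary $E$.

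For the ``resp.'' part, I would invoke the definition of a $p$-barrelled space: every weak$^\ast$ $p$-summable sequence in $E'$ is equicontinuous. Combined with clause (i) of Lemma \ref{l:limited-set-1}, this forces $\mathsf{L}_{(p,q)}(E)=\mathsf{EL}_{(p,q)}(E)$ under the $p$-barrelledness assumption, so the equivalence for $(p,q)$-$\EE$-limited sets follows immediately from the first part. The proof is essentially a definition-chase, and the only mild obstacle is making sure the identifications $(\ell_1^0(A))'=\ell_\infty(A)$ and $(E'_{w^\ast})'=E$ are used so that the ``weakly/strongly $p$-summable'' terminology of Definition \ref{def:qp-summable} lines up with the ``weak$^\ast$ $p$-summable plus $(\|\chi_n\|_A)\in\ell_q$'' terminology of Definition \ref{def:p-limit-coarse-set}.
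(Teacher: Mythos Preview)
Your proof is correct and follows essentially the same approach as the paper: compute $\|T_A^\ast(\chi)\|_{\ell_\infty(A)}=\sup_{a\in A}|\langle\chi,a\rangle|$ and then unwind the definitions of $(p,q)$-limited set and $(q,p)$-convergent map, handling the $p$-barrelled case via Lemma~\ref{l:limited-set-1}(i). The only cosmetic difference is that you introduce explicit basis vectors $e_a$ for $\ell_1^0(A)$, whereas the paper writes $T_A(a)$ directly.
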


\begin{proof}
Consider an operator $T_A:\ell_1^0(A) \to E$ defined in (\ref{equ:L1-E}).
Observe that for each $\chi\in E'$, the $a$th coordinate $T_A^\ast(\chi)(a)$ of $T_A^\ast(\chi)$ is
\[
T_A^\ast(\chi)(a) = \langle T_A^\ast(\chi),a\rangle=\langle\chi,T_A(a)\rangle=\langle\chi,a\rangle,
\]
and hence
\begin{equation} \label{equ:pq-limited-summing-1}
\|T^\ast_A(\chi)\|_{\ell_\infty(A)}=\sup_{a\in A} |T^\ast_A(\chi)(a)|=\sup_{a\in A}|\langle\chi,a\rangle|
\end{equation}

Now, by definition, a subset $A$ of $E$ is a  $(p,q)$-limited set if and only if $\big(\sup_{a\in A}|\langle\chi_n,a\rangle|\big)\in \ell_q$ (or $\in c_0$ if $q=\infty$) for every weak$^\ast$ $p$-summable sequence $\{\chi_n\}_{n\in\w}$ in $E'$, and hence, by  (\ref{equ:pq-limited-summing-1}), if and only if $\big(\|T^\ast_A(\chi_n)\|_{\ell_\infty(A)}\big)\in \ell_q$ (or $\in c_0$ if $q=\infty$) for every weakly $p$-summable sequence $\{\chi_n\}_{n\in\w}$ in $E'_{w^\ast}$, i.e., $T^\ast_A$  is a $(q,p)$-convergent linear map.

The case when  $E$ is $p$-barrelled follows from the fact that $(p,q)$-limited subsets of $E$ are exactly $(p,q)$-$\EE$-limited (see (i) of Lemma \ref{l:limited-set-1}).\qed
\end{proof}

We select the next theorem. 
\begin{theorem} \label{t:qp-convergent-pq-limited-1}
Let $1\leq p\leq q\leq \infty$, $E$ be a locally convex space, and let $T$ be an operator from a normed space $L$ to $E$. Then $T(B_L)$ is a $(p,q)$-limited subset of $E$ if and only if  $T^\ast:E'_{w^\ast} \to L'_\beta$ is $(q,p)$-convergent.
\end{theorem}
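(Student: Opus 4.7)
The plan is to imitate the proof of Theorem \ref{t:pq-limited-summing} essentially verbatim, replacing the universal operator $T_A:\ell_1^0(A)\to E$ by the given operator $T:L\to E$ and replacing $\ell_\infty(A)$ by $L'_\beta$. First I would note that $T(B_L)$ is bounded in $E$ because $T$ is continuous and $B_L$ is bounded in the normed space $L$, so the property of being $(p,q)$-limited even makes sense for $T(B_L)$.

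The single key computation is that for every $\chi\in E'$,
\[
\|T^\ast(\chi)\|_{L'_\beta}=\sup_{x\in B_L}|\langle T^\ast(\chi),x\rangle|=\sup_{x\in B_L}|\langle\chi,T(x)\rangle|=\sup_{y\in T(B_L)}|\langle\chi,y\rangle|=\|\chi\|_{T(B_L)},
\]
where the first equality uses that $\|\cdot\|_{L'_\beta}$ is the operator norm given by the supremum over $B_L$, and the second uses the definition of the adjoint.

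With this identity in hand, both implications are just the unwinding of definitions. Recall from the preliminaries that a sequence $\{\chi_n\}_{n\in\w}$ in $E'$ is weak$^\ast$ $p$-summable exactly when it is weakly $p$-summable as a sequence in $E'_{w^\ast}$. For the forward direction, assume $T(B_L)$ is $(p,q)$-limited and take any weakly $p$-summable sequence $\{\chi_n\}_{n\in\w}$ in $E'_{w^\ast}$. Then $\big(\|\chi_n\|_{T(B_L)}\big)\in\ell_q$ (or $\in c_0$ if $q=\infty$), and the identity gives $\big(\|T^\ast(\chi_n)\|_{L'_\beta}\big)\in\ell_q$ (or $\in c_0$); that is, $\{T^\ast(\chi_n)\}_{n\in\w}$ is strongly $q$-summable in $L'_\beta$, so by Definition \ref{def:qp-summable} the map $T^\ast$ is $(q,p)$-convergent. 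The converse is the same chain of equivalences read from right to left.

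I expect no real obstacle here: everything reduces to the operator-norm identity above, which in turn relies only on the fact that $L$ is normed so that its dual norm is computed as a sup over $B_L$. The only minor point to double-check is the routine equivalence between weak$^\ast$ and weak summability in $E'_{w^\ast}$, which however holds by definition.
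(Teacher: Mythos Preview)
Your proof is correct and follows essentially the same route as the paper's own proof: both hinge on the single identity $\|T^\ast(\chi)\|_{L'_\beta}=\sup_{y\in B_L}|\langle\chi,T(y)\rangle|$ and then unwind the definitions of $(p,q)$-limited set and $(q,p)$-convergent map. Your write-up is slightly more detailed (you explicitly note that $T(B_L)$ is bounded and spell out both implications), but the argument is the same.
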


\begin{proof}
Observe that for every $\chi\in E'$, we have
\begin{equation} \label{equ:qp-convergent-pq-limited-1}
\|T^\ast(\chi)\|_{L'_\beta}=\sup_{y\in B_L} |\langle T^\ast(\chi),y\rangle|= \sup_{y\in B_L} |\langle \chi,T(y)\rangle|.
\end{equation}
Let $\{\chi_n\}_{n\in\w}$ be a weak$^\ast$ $p$-summable sequence in $E'$. Then, by (\ref{equ:qp-convergent-pq-limited-1}), we have
$\big(\|T^\ast(\chi_n)\|_{L'_\beta}\big)_{n\in\w} =\big( \sup_{y\in B_L} |\langle \chi_n, T(y)\rangle|\big)_{n\in\w}$. Now the theorem follows from the definition of $(p,q)$-limited sets and  the definition of $(q,p)$-convergent linear map.\qed
\end{proof}

It is natural to find some classes of subsets which are $(p,q)$-limited. Below, under additional assumption on an lcs $E$, we show that any precompact subset $A$ of $E$ is $(p,\infty)$-limited. 

\begin{proposition} \label{p:precompact-p-limited}
Let  $p\in[1,\infty]$, and let $E$  be a locally convex space.
\begin{enumerate}
\item[{\rm(i)}] Every precompact subset $A$ of $E$ is $(p,\infty)$-$\EE$-limited.
\item[{\rm(ii)}] If $E$ is $p$-barrelled, then every precompact subset $A$ of $E$ is $(p,\infty)$-limited.
\end{enumerate}
\end{proposition}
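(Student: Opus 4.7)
The plan is to prove (i) by a standard precompactness-plus-equicontinuity argument, and then deduce (ii) immediately from (i) together with part (i) of Lemma \ref{l:limited-set-1}.

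For clause (i), I would fix an equicontinuous weak$^\ast$ $p$-summable sequence $\{\chi_n\}_{n\in\w}$ in $E'$ and show that $\|\chi_n\|_A\to 0$. Given $\e>0$, use equicontinuity to choose $U\in\Nn_0(E)$ with $|\langle\chi_n,x\rangle|\le \e/2$ for every $x\in U$ and every $n\in\w$; this is possible because $\{\chi_n\}\subseteq \lambda V^\circ$ for some $V\in\Nn_0(E)$ and some $\lambda>0$, and we may take $U:=\tfrac{\e}{2\lambda}V$. Next, use precompactness of $A$ to pick a finite set $F=\{a_1,\dots,a_k\}\subseteq E$ with $A\subseteq F+U$. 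Since $\{\chi_n\}_{n\in\w}$ is weak$^\ast$ $p$-summable (with the $c_0$ condition when $p=\infty$), for each fixed $a_i$ we have $\langle\chi_n,a_i\rangle\to 0$ as $n\to\infty$. Hence there is $N\in\w$ such that $|\langle\chi_n,a_i\rangle|<\e/2$ for all $i=1,\dots,k$ and $n\ge N$. For any $a\in A$, writing $a=a_i+u$ with $u\in U$, we obtain
\[
|\langle\chi_n,a\rangle|\le |\langle\chi_n,a_i\rangle|+|\langle\chi_n,u\rangle|<\e,\qquad n\ge N,
\]
so $\|\chi_n\|_A\to 0$. By the definition of $(p,\infty)$-$\EE$-limited sets, $A\in\mathsf{EL}_{(p,\infty)}(E)$.

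For clause (ii), if $E$ is $p$-barrelled then by definition every weak$^\ast$ $p$-summable sequence in $E'_{w^\ast}$ is equicontinuous, which is exactly the condition in (i) of Lemma \ref{l:limited-set-1} making $\mathsf{EL}_{(p,\infty)}(E)=\mathsf{L}_{(p,\infty)}(E)$. Consequently, the precompact set $A$ from clause (i) is automatically $(p,\infty)$-limited.

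There is no real obstacle here; the only subtle point is ensuring that equicontinuity provides a \emph{uniform} bound on $\{\chi_n\}$ (rather than a bound depending on $n$), which is precisely why the finitely many pointwise limits $\langle\chi_n,a_i\rangle\to 0$ combine with the precompact cover $A\subseteq F+U$ to give the uniform estimate over $A$.
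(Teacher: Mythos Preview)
Your proof is correct and follows essentially the same approach as the paper. The paper invokes Proposition 3.9.8 of \cite{horvath} (that on an equicontinuous set the weak$^\ast$ topology coincides with the topology of uniform convergence on precompact subsets of $E$), whereas you unpack this fact directly via the standard $\e/2$-argument with a finite cover $A\subseteq F+U$; the underlying idea is identical, and your deduction of (ii) from (i) via $p$-barrelledness matches the paper's.
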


\begin{proof}
Let $S=\{\chi_n\}_{n\in\w}$ be a (resp., equicontinuous) weak$^\ast$ $p$-summable sequence  in $E'$. If $E$ is $p$-barrelled, then $S$ is equicontinuous. Therefore in both cases (i) and (ii) we can assume that $S$ is equicontinuous. Hence, by Proposition 3.9.8 of \cite{horvath}, the weak$^\ast$ topology $\sigma(E',E)$ and the topology $\tau_{pc}$ of uniform convergence on precompact subsets of $E$ coincide on $S$. Since $S$ is weak$^\ast$ $p$-summable, it is a weak$^\ast$ null-sequence. Therefore $\chi_n\to 0$ also in  $\tau_{pc}$. As $A$ is precompact, we obtain $\sup_{x\in A} |\langle \chi_n, x\rangle|\to 0$. Thus $A$ is a $(p,\infty)$-limited set (resp., a $(p,\infty)$-$\EE$-limited set).\qed
\end{proof}

Since, by definition, $\infty$-barrelled spaces are exactly $c_0$-barrelled, setting $p=\infty$ in (ii) of Proposition \ref{p:precompact-p-limited} we obtain the next assertion.

\begin{corollary} \label{c:precompact-p-limited}
If $E$ is a $c_0$-barrelled space, then every precompact subset of $E$ is limited.
\end{corollary}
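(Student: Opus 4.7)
The plan is to derive Corollary \ref{c:precompact-p-limited} as the $p=\infty$ instance of Proposition \ref{p:precompact-p-limited}(ii), after reconciling terminology. First I would observe that, by the definitions of weak barrelledness conditions given in Section \ref{sec:pre} together with the definition of weak$^\ast$ $p$-summable sequence (which for $p=\infty$ means a weak$^\ast$ null sequence), a locally convex space is $\infty$-barrelled if and only if every weak$^\ast$ null sequence in $E'$ is equicontinuous, and this coincides verbatim with the definition of a $c_0$-barrelled space. Hence the hypothesis of the corollary matches precisely the hypothesis of Proposition \ref{p:precompact-p-limited}(ii) specialized to $p=\infty$.

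Next I would invoke Definition \ref{def:p-limit-coarse-set}(i), which stipulates that limited sets are exactly $(\infty,\infty)$-limited sets. Applying Proposition \ref{p:precompact-p-limited}(ii) with $p=\infty$ then tells us that every precompact subset $A$ of $E$ is $(\infty,\infty)$-limited, i.e.\ limited, which is the desired conclusion.

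I expect no substantive obstacle here, since all the real work is already contained in Proposition \ref{p:precompact-p-limited}(ii): the essential ingredients there were that the $p$-barrelledness hypothesis promotes the given weak$^\ast$ $p$-summable sequence to an equicontinuous one, so that by Proposition 3.9.8 of \cite{horvath} the weak$^\ast$ topology and the topology $\tau_{pc}$ of uniform convergence on precompact sets agree on that sequence, whence weak$^\ast$ nullity combined with precompactness of $A$ forces $\sup_{x\in A}|\langle\chi_n,x\rangle|\to 0$. In the corollary, the role of this argument is played by the $c_0$-barrelledness hypothesis, but no new idea is required beyond the definitional identifications above.
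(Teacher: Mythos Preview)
Your proposal is correct and follows exactly the paper's approach: identify $c_0$-barrelled with $\infty$-barrelled, apply Proposition~\ref{p:precompact-p-limited}(ii) with $p=\infty$, and use that $(\infty,\infty)$-limited means limited.
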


The condition in (ii) of Proposition \ref{p:precompact-p-limited} that $E$ is $p$-barrelled  is essential as the following theorem shows.
Moreover, it may happen that a non-$p$-barrelled space contains even compact sets which are not limited, see Example \ref{exa:compact-not-limited} below.





\begin{theorem} \label{t:Cp-p-barreled}
Let $p\in[1,\infty]$, $X$ be a Tychonoff space, and let $\TTT$ be a locally convex vector topology on $L(X)$ compatible with the duality $(L(X),C(X))$. Then the following assertions are equivalent:
\begin{enumerate}
\item[{\rm(i)}] the space $C_p(X)$ is $p$-barrelled;
\item[{\rm(ii)}] every precompact (= bounded) subset of $C_p(X)$ is $(p,\infty)$-limited;
\item[{\rm(iii)}] $X$ has no infinite functionally bounded subsets;
\item[{\rm(iv)}] each bounded subset of $C_p(X)$ is $(p,q)$-limited for some (every) $p\leq q\leq\infty$;
\item[{\rm(v)}] $L_\TTT(X)$ is quasi-complete;
\item[{\rm(vi)}] $L_\TTT(X)$ is  sequentially complete;
\item[{\rm(vii)}] $L_\TTT(X)$ is  locally complete.
\end{enumerate}
\end{theorem}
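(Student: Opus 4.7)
My plan is to prove the seven conditions equivalent by splitting them into the limited-set block (i)--(iv) and the completeness block (v)--(vii), with (iii) as the bridge. I would first record the structural fact that bounded and precompact subsets of $C_p(X)$ coincide, since $C_p(X)$ embeds topologically into $\IF^X$ where pointwise-bounded sets are relatively compact by Tychonoff's theorem; this validates the parenthetical in (ii).

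For the limited-set block I rely on two standing facts about $C_p(X)$: it is always quasibarrelled, hence $c_0$-quasibarrelled, and the equicontinuous subsets of $L(X)=C_p(X)'$ are precisely the uniformly-bounded families whose combined support is a finite subset of $X$. With these the cycle runs as follows. For (iii) $\Rightarrow$ (i), I invoke the Buchwalter--Schmets theorem to conclude that under (iii) the space $C_p(X)$ is barrelled, so every weak$^\ast$ $p$-summable sequence in $L(X)$ is automatically equicontinuous. For (i) $\Rightarrow$ (ii), I apply Proposition \ref{p:precompact-p-limited}(ii) together with bounded $=$ precompact. For (ii) $\Rightarrow$ (i), any weak$^\ast$ $p$-summable $\{\chi_n\}\subseteq L(X)$ is $\beta(L(X),C_p(X))$-null by (ii), and $c_0$-quasibarrelledness then upgrades strong nullness to equicontinuity. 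For (iii) $\Rightarrow$ [(iv) for every $q$], equicontinuity places $\chi_n=\sum_{y\in F}a_{n,y}\delta_y$ on a common finite $F\subseteq X$, and evaluating $\chi_n$ at bump functions at each $y\in F$ gives $(a_{n,y})_{n\in\w}\in\ell_p$; for a bounded $B\subseteq C_p(X)$ with $M_B(y):=\sup_{f\in B}|f(y)|<\infty$ this yields
\[
\sup_{f\in B}|\langle\chi_n,f\rangle|\;\leq\;\sum_{y\in F}|a_{n,y}|\,M_B(y)\;\in\;\ell_p\subseteq\ell_q.
\]
Finally, [(iv) for some $q$] $\Rightarrow$ (ii) is immediate from Lemma \ref{l:limited-set-1}(vi) with $q'=\infty$.

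For the completeness block, (v) $\Rightarrow$ (vi) $\Rightarrow$ (vii) hold in any locally convex space. For (iii) $\Rightarrow$ (v), a closed bounded $B\subseteq L_\TTT(X)$ is $\sigma(L(X),C(X))$-bounded (all compatible topologies share the same bounded sets) and hence equicontinuous in $C_p(X)'$ under (iii), so it lies inside a finite-dimensional subspace $L(F)$ in which closed bounded sets are compact and therefore complete. For (vii) $\Rightarrow$ (iii), I argue contrapositively: from an infinite functionally bounded sequence $\{x_n\}_{n\in\w}\subseteq X$ of distinct points, the sequence $y_n:=2^{-n}\delta_{x_n}$ is null in $L_\TTT(X)$ via a Mackey-topology computation using functional boundedness, while its closed absolutely convex hull in $L_\TTT(X)$ must contain all partial sums $\sum_{k\leq n}2^{-k}\delta_{x_k}$, whose only plausible limit $\sum_{k\in\w}2^{-k}\delta_{x_k}$ is a formal infinite-support object lying outside $L(X)$; therefore the hull cannot be compact.

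The main obstacle I anticipate is verifying that $\{y_n\}$ is a genuine null sequence in every compatible topology $\TTT$ and not merely in $\sigma(L(X),C(X))$. This reduces to the Mackey-type estimate $2^{-n}\sup_{f\in K}|f(x_n)|\to 0$ on every $\sigma(C(X),L(X))$-compact absolutely convex $K\subseteq C(X)$, which can be handled by a diagonal argument exploiting functional boundedness of $\{x_n\}$, or sidestepped by invoking classical duality-invariance results for completeness properties of free locally convex spaces that show (v)--(vii) depend only on the duality $(L(X),C(X))$ and are each equivalent to (iii).
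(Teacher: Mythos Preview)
Your scheme has two genuine gaps.

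\textbf{The limited-set block never returns to (iii).} The implications you list are (iii)$\Rightarrow$(i), (i)$\Rightarrow$(ii), (ii)$\Rightarrow$(i), (iii)$\Rightarrow$(iv), (iv)$\Rightarrow$(ii). Tracing the directed graph, from any of (i), (ii), (iv) you can only reach (i) and (ii); nothing points back to (iii), (v), (vi), or (vii). So the ``cycle'' is not a cycle, and (i)--(iv) are not shown equivalent. The paper closes this with a direct (ii)$\Rightarrow$(iii) by contrapositive: given an infinite functionally bounded $A\subseteq X$, it builds an explicit bounded set $B\subseteq C_p(X)$ and a weak$^\ast$ $p$-summable sequence $\chi_n=2^{-n}\delta_{x_n}$ with $\sup_{f\in B}|\langle\chi_n,f\rangle|\geq 1$. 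You need some such step; the (ii)$\Rightarrow$(i) observation you add, while correct, does not substitute for it.

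\textbf{The (vii)$\Rightarrow$(iii) argument misidentifies what lies in the hull.} With $y_k=2^{-k}\delta_{x_k}$, the partial sums $\sum_{k\leq n}2^{-k}\delta_{x_k}=\sum_{k\leq n}y_k$ have coefficient-sum $n+1$, so they are \emph{not} in $\acx\{y_k\}$ and there is no reason the closed hull must contain them. The idea is salvageable: use instead $s_N=\sum_{k\leq N}\lambda_k y_k$ with $\sum|\lambda_k|\leq 1$ and $\lambda_k\neq 0$ for all $k$ (e.g.\ $\lambda_k=2^{-k-1}$), observe that $s_N$ is $\sigma(L(X),C(X))$-Cauchy by functional boundedness but has no limit in $L(X)$ since any limit would need infinite support, and deduce that $\cacx\{y_k\}$ is not compact. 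You also flag the issue of proving $y_n\to 0$ in every compatible $\TTT$; this is delicate, and your fallback to duality-invariance of local completeness is the right move. The paper avoids both difficulties by closing the loop differently: it proves (vii)$\Rightarrow$(i) directly, using that local completeness passes from $L_\TTT(X)$ to $L(X)_{w^\ast}$ (local completeness depends only on the dual pair) together with the cited result that a Mackey space whose weak$^\ast$ dual is locally complete is $p$-barrelled.
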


\begin{proof}
(i)$\Ra$(ii) follows from (ii) of Proposition \ref{p:precompact-p-limited}.


(ii)$\Ra$(iii) Assume that every bounded subset of $C_p(X)$ is $(p,\infty)$-limited, and suppose for a contradiction that $X$ has an infinite functionally bounded subset $A$. Then one can find a sequence $\{x_n\}_{n\in\w}$ in $A$ and a sequence  $\{U_n\}_{n\in\w}$ of open subsets of $X$ such that $x_n\in U_n$ and $U_n\cap U_m=\emptyset$ for all distinct $n,m\in\w$. Set
\[
B:=\Big\{f\in C_p(X): f(U_n)\subseteq [0,2^{n+1}] \mbox{ for all $n\in\w$, and } f\big( X\SM \bigcup_{n\in\w} U_n\big) \subseteq \{0\}\Big\}.
\]
Then $B$ is a bounded subset of $C_p(X)$, and hence $B$ is $(p,\infty)$-limited. For every $n\in\w$, set $\chi_n:=\tfrac{1}{2^n} \delta_{x_n}$. Since $A$ is functionally bounded, we obtain that the sequence $S=\{\chi_n\}_{n\in\w}$ is weak$^\ast$ $p$-summable in the dual space $C_p(X)'$. For every $n\in\w$, take a continuous function $g_n:X\to[0,2^{n}]$ such that $g_n(X\SM U_n)\subseteq \{0\}$ and $g_n(x_n)=2^{n}$. It is clear that $g_n\in B$ for all $n\in\w$. However, since
\[
\sup_{f\in B} |\langle\chi_n,f\rangle|\geq |\langle\chi_n,g_n\rangle|=1 \not\to 0,
\]
we obtain that $B$ is not $(p,\infty)$-limited, a contradiction.
\smallskip

(iii)$\Ra$(i) Assume that $X$ has no infinite functionally bounded subsets. By the Buchwalter--Schmets theorem, the space $C_p(X)$ is barrelled and hence it is $p$-barrelled.

(iii)$\Ra$(iv) Fix $p\leq q\leq\infty$, and let $B$ be a bounded subset of $C_p(X)$. Take an arbitrary  weak$^\ast$ $p$-summable sequence $S=\{\chi_n\}_{n\in\w}$ in $C_p(X)'=L(X)$. Since $S$ is weak$^\ast$ bounded and the topology of the free lcs $L(X)$ is compatible with $\sigma\big( L(X),C_p(X)\big)$ it follows that $S$ is a bounded subset of $L(X)$. As all functionally bounded subsets of $X$ are finite, Proposition 2.7 of \cite{Gabr-free-lcs} implies that  $S$ is finite-dimensional.
By Lemma 4.6 of \cite{Gab-Pel}, 
there are linearly independent elements $\eta_1,\dots,\eta_s\in L(X)$ and sequences $(a_{1,n}),\dots,(a_{s,n})\in \ell_p$ (or $\in c_0$ if $p=\infty$) such that
\[
\chi_n=a_{1,n} \eta_1 +\cdots +a_{s,n}\eta_s \;\; \mbox{ for every $n\in\w$}.
\]
Now, since $B$ is a bounded subset of $C_p(X)$ we obtain
\[
\sup_{f\in B} |\langle\chi_n,f\rangle| \leq \sum_{i=1}^s |a_{i,n}| \cdot \sup_{f\in B} |\langle\eta_i,f\rangle|
\]
and hence the inequality $p\leq q$ implies $\big(\sup_{f\in B} |\langle\chi_n, f\rangle|\big)_n \in\ell_q$ (or $\in c_0$ if $q=\infty$). Therefore $B$ is a $(p,q)$-limited set, as desired.
\smallskip

(iv)$\Ra$(ii)   Assume that each bounded subset of $C_p(X)$ is $(p,q)$-limited for some  $p\leq q\leq\infty$. Then, by (vi) of Lemma  \ref{l:limited-set-1}, every bounded subset of $C_p(X)$ is $(p,\infty)$-limited.

(iii)$\Ra$(v) Since all  functionally bounded subsets of $X$ are finite, Proposition 2.7 of \cite{Gabr-free-lcs} implies that any bounded subset of $L_\TTT(X)$ is finite-dimensional. Thus $L_\TTT(X)$ is quasi-complete.

The implication (v)$\Ra$(vi) and (vi)$\Ra$(vii) hold true for any lcs. 

(vii)$\Ra$(i) Assume that $L_\TTT(X)$ is locally complete. Since $(L(X),C(X))$ is a dual pair and $L_\TTT(X)$ is locally complete, it follows that $L(X)_{w^\ast}$ is also locally complete. As $C_p(X)$ is quasibarrelled hence Mackey, Theorem 5.6 of \cite{Gab-Pel}, 
implies that $C_p(X)$ is $p$-barrelled.\qed 
\end{proof}
For numerous  other equivalent conditions to (i)-(vii) of Theorem \ref{t:Cp-p-barreled} 
see Theorem 3.5 of \cite{BG-free}.

According to (viii) of Lemma \ref{l:limited-set-1}, every $(p,q)$-limited set is a $(p,q)$-$(V^\ast)$ set, but the converse is not true in general as the following corollary shows.
\begin{corollary} \label{c:Cp-V*-sets-non-GP}
Let $1\leq p\leq q\leq\infty$, and let $X$ be a Tychonoff space which has infinite functionally bounded subsets. Then $C_p(X)$ contains $(p,q)$-$(V^\ast)$ sets which are not $(p,q)$-limited.
\end{corollary}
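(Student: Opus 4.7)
The plan is to combine Theorem~\ref{t:Cp-p-barreled} with the observation that \emph{every} bounded subset of $C_p(X)$ is automatically a $(p,q)$-$(V^\ast)$ set; then any bounded set that fails to be $(p,q)$-limited supplies the required example. Since $X$ has an infinite functionally bounded subset, condition (iii) of Theorem~\ref{t:Cp-p-barreled} fails. Consequently (iv) fails as well, and its ``for some (every) $q$'' formulation yields a bounded subset $B\subseteq C_p(X)$ which is not $(p,q')$-limited for any $q'\in[p,\infty]$; in particular $B\notin \mathsf{L}_{(p,q)}(C_p(X))$.

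It therefore suffices to verify that every bounded subset of $C_p(X)$ lies in $\mathsf{V}^\ast_{(p,q)}(C_p(X))$. Take any weakly $p$-summable sequence $\{\eta_n\}_{n\in\w}$ in the strong dual $L(X)_\beta$. Weak $p$-summability forces weak boundedness and hence, by Mackey's theorem, boundedness in $L(X)_\beta$; since $C_p(X)$ is quasibarrelled, $\{\eta_n\}_{n\in\w}$ is equicontinuous in $L(X)=C_p(X)'$. A direct polar computation, using only the form of the basic neighborhoods $U=\{f\in C(X):|f(x_i)|<\varepsilon,\ i=1,\dots,m\}$ of zero in $C_p(X)$ together with the complete regularity of $X$, shows that any equicontinuous subset of $L(X)$ is contained in a finite-dimensional subspace of the form $\spn\{\delta_{x_1},\dots,\delta_{x_m}\}$.

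Writing $\eta_n=\sum_{i=1}^{m}a_{i,n}\delta_{x_i}$, choose (again by complete regularity) functions $g_i\in C(X)$ with $g_i(x_j)=\delta_{ij}$; viewing each $g_i$ as an element of $(L(X)_\beta)'$ via the canonical bidual embedding $J_{C_p(X)}:C_p(X)\hookrightarrow (L(X)_\beta)'$, the weak $p$-summability of $\{\eta_n\}_{n\in\w}$ yields $(a_{i,n})_{n\in\w}\in\ell_p$ (or $\in c_0$ if $p=\infty$) for every $i$. Setting $c_i:=\sup_{f\in B}|f(x_i)|<\infty$ for an arbitrary bounded $B\subseteq C_p(X)$, one obtains
\[
\sup_{f\in B}|\langle\eta_n,f\rangle|\le\sum_{i=1}^{m}c_i\,|a_{i,n}|,
\]
which, being a finite linear combination of $\ell_p$-sequences, lies in $\ell_p\subseteq\ell_q$ (or in $c_0$ if $q=\infty$). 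Hence $B\in\mathsf{V}^\ast_{(p,q)}(C_p(X))$, completing the argument. The main conceptual step is the finite-dimensional rigidity of equicontinuous subsets of $L(X)$, coming from the explicit shape of basic zero neighborhoods in $C_p(X)$; once this is noticed, quasibarrelledness of $C_p(X)$ and a short coefficient estimate do the rest.
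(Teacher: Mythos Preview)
Your proof is correct and follows essentially the same approach as the paper: both use Theorem~\ref{t:Cp-p-barreled} to produce a bounded set that is not $(p,q)$-limited, and both rely on the fact that every bounded subset of $C_p(X)$ is a $(p,q)$-$(V^\ast)$ set. The only difference is that the paper cites this last fact as Corollary~7.11 of \cite{Gab-Pel}, whereas you supply a self-contained proof via the quasibarrelledness of $C_p(X)$ and the finite-dimensionality of equicontinuous subsets of $L(X)$ --- an argument entirely parallel to the one the paper itself uses inside the proof of (iii)$\Rightarrow$(iv) of Theorem~\ref{t:Cp-p-barreled}.
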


\begin{proof}
By Corollary 7.11 of \cite{Gab-Pel}, 
for every  Tychonoff space $X$ we have $\mathsf{V}_{(p,q)}^\ast\big(C_p(X)\big)=\Bo\big(C_p(X)\big)$. 
Now the assertion follows from Theorem \ref{t:Cp-p-barreled}.\qed
\end{proof}

For a better understanding it is convenient to have a concrete example of a compact subset which is not limited. Denote by $\mathbf{s}=[0,\w]$ a convergent sequence.
\begin{example} \label{exa:compact-not-limited}
There are compact subsets of $C_p(\mathbf{s})$ which are not limited.
\end{example}

\begin{proof}
For every $n\in\w$, let $f_n=1_{\{n\}}$ be the characteristic function of the set $\{n\}$ and let $\chi_n :=\delta_n -\delta_{n+1}$, where $\delta_x$ denoted the Dirac measure at the point $x$. Evidently, the sequence $S=\{f_n\}_{n\in\w}$ is a null sequence in $C_p(\mathbf{s})$, and the sequence $\{\chi_n\}_{n\in\w}$ is weak$^\ast$ null. Since $\sup\{|\langle\chi_n, f_i\rangle|:i\in\w\}\geq |\langle\chi_n, f_n\rangle|=1 \not\to 0$ it follows that $S$ is not limited.\qed
\end{proof}

Proposition \ref{p:precompact-p-limited} motivates the following inverse problem: {\em Characterize locally convex spaces whose $(p,q)$-limited subset {\rm(}resp.,  $(p,q)$-$\EE$-limited subset{\rm)} of $E$ are precompact}. We solve this problem in the next theorem. 
\begin{theorem} \label{t:limited-set-precompact}
Let $1\leq p\leq q\leq\infty$. For a locally convex space $E$ the following assertions are equivalent:
\begin{enumerate}
\item[{\rm (i)}] every $(p,q)$-limited subset {\rm(}resp.,  $(p,q)$-$\EE$-limited subset{\rm)} of $E$ is precompact;
\item[{\rm (ii)}] each operator $T:L\to E$ from an lcs $L$ to $E$ which transforms bounded subsets of $L$ to  $(p,q)$-limited subsets {\rm(}resp.,  $(p,q)$-$\EE$-limited subset{\rm)}  of $E$, transforms bounded subsets of $L$ to precompact subsets of $E$;
\item[{\rm (iii)}] as in {\rm(ii)} with a normed space $L$.
\end{enumerate}
If in addition $E$ is locally complete, then {\rm(i)--(iii)} are equivalent to
\begin{enumerate}
\item[{\rm (iv)}] as in {\rm(ii)} with a Banach space $L$.
\end{enumerate}
\end{theorem}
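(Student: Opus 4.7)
The plan is to prove the cycle (i)$\Ra$(ii)$\Ra$(iii)$\Ra$(i), and, under the local-completeness hypothesis, the additional cycle (i)$\Ra$(iv)$\Ra$(i). Throughout I treat the $(p,q)$-limited and the $(p,q)$-$\EE$-limited versions in parallel: Lemma \ref{l:limited-set-1}(iii) provides, for both families, closure under subsets, scalar multiples, and closed absolutely convex hulls, which is all that the argument needs.

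The implications (i)$\Ra$(ii), (i)$\Ra$(iv) and (ii)$\Ra$(iii) are immediate. Indeed, (i) forces every $(p,q)$-limited (resp.\ $(p,q)$-$\EE$-limited) subset of $E$ to be precompact, so any operator $T:L\to E$ which carries bounded sets of $L$ into the corresponding limited family automatically carries them into precompact sets; and a normed space is a particular lcs, so (ii) contains (iii) as a special case.

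For (iii)$\Ra$(i), fix a $(p,q)$-limited (resp.\ $(p,q)$-$\EE$-limited) subset $A$ of $E$. By Lemma \ref{l:limited-set-1}(ii) the set $A$ is bounded, so the operator $T_A\colon \ell_1^0(A)\to E$ from (\ref{equ:L1-E}) is at our disposal. The value of $T_A$ on the closed unit ball of $\ell_1^0(A)$ is precisely $\acx(A)$, and by Lemma \ref{l:limited-set-1}(iii) the set $\overline{\acx}(A)$ is again $(p,q)$-limited (resp.\ $(p,q)$-$\EE$-limited); since every bounded subset of the normed space $\ell_1^0(A)$ is contained in a scalar multiple of $B_{\ell_1^0(A)}$, the operator $T_A$ sends bounded sets into the corresponding limited family. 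Applying (iii) forces $\acx(A)=T_A(B_{\ell_1^0(A)})$ to be precompact, whence $A\subseteq\acx(A)$ is precompact as well.

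For (iv)$\Ra$(i) under the assumption that $E$ is locally complete, replace $\ell_1^0(A)$ by the Banach space $L:=E_D$, where $D:=\overline{\acx}(A)$ is equipped with the Minkowski functional $p_D$ as its norm. Local completeness of $E$ is exactly what guarantees that every closed bounded absolutely convex subset of $E$ is a Banach disk, so $(E_D,p_D)$ is indeed a Banach space whose closed unit ball is $D$. Lemma \ref{l:limited-set-1}(iii) gives $D\in\mathsf{L}_{(p,q)}(E)$ (resp.\ $D\in\mathsf{EL}_{(p,q)}(E)$), and the canonical inclusion $T\colon E_D\hookrightarrow E$ therefore sends bounded sets of $L$ into the corresponding limited family, just as above. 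Hypothesis (iv) then forces $T(B_L)=D$ to be precompact, and hence so is $A\subseteq D$. The single nontrivial ingredient is the standard characterization of local completeness via Banach disks, which is the only place where the additional hypothesis on $E$ enters; no serious obstacle is anticipated.
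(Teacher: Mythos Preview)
Your proof is correct and follows essentially the same strategy as the paper. The only difference is organizational: for (iii)$\Rightarrow$(i) you use the normed space $\ell_1^0(A)$ with the operator $T_A$ of (\ref{equ:L1-E}), whereas the paper uniformly treats (iii)$\Rightarrow$(i) and (iv)$\Rightarrow$(i) by passing at once to $A^{\circ\circ}=\overline{\acx}(A)$ and using the gauge space $E_{A^{\circ\circ}}$, which is normed in general and Banach when $E$ is locally complete; your (iv)$\Rightarrow$(i) is then identical to the paper's.
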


\begin{proof}
(i)$\Rightarrow$(ii) Let $T:L\to E$ be an operator which transforms bounded subsets of an lcs $L$ to  $(p,q)$-limited (resp.,  $(p,q)$-$\EE$-limited) subsets of $E$. Let $A$ be a  bounded subset of $L$. Then $T(A)$ is a $(p,q)$-limited  (resp.,  $(p,q)$-$\EE$-limited) subset of $E$, and hence, by (i), $T(A)$ is precompact. Thus $T$ transforms bounded subsets of $L$ to precompact  subsets of $E$.
\smallskip

(ii)$\Rightarrow$(iii) and (ii)$\Rightarrow$(iv) are trivial.
\smallskip

(iii)$\Rightarrow$(i) and (iv)$\Rightarrow$(i): Fix a $(p,q)$-limited  (resp.,  $(p,q)$-$\EE$-limited)  subset $A$ of $E$. By (iii) of Lemma \ref{l:limited-set-1}, without loss of generality we can assume that $A=A^{\circ\circ}$. Consider the normed  space $E_A$ (if $E$ is locally complete, then $E_A$ is a Banach space), where the norm on $E_A$ is defined by the gauge of $A$, and recall that the closed unit ball $B_A$ of  $E_A$ is exactly $A$. By Propositions 3.2.2 and 5.1.6 of \cite{PB}, 
the identity inclusion $T:E_A\to E$ is continuous and the set $T(B_A)=A$ is a $(p,q)$-limited  (resp.,  $(p,q)$-$\EE$-limited) set. Since any bounded subset of $E_A$ is contained in some $aB_A$, $a>0$, Lemma \ref{l:limited-set-1} implies that $T$ transforms bounded subsets of the normed (resp., Banach) space $E_A$ to  $(p,q)$-limited  (resp.,  $(p,q)$-$\EE$-limited)  subsets of $E$.  Therefore, by (iii) or (iv), the set $A=T(B)$ is precompact.\qed
\end{proof}

By (vi) of Lemma \ref{l:limited-set-1}, every $(p,q)$-limited set is $(1,\infty)$-limited. Therefore to characterize  $(1,\infty)$-limited sets in locally convex spaces is an important problem. For barrelled spaces we solve this problem in Proposition \ref{p:char-1-limited} below.
Our proof is similar to the proof of Proposition 1.1 of \cite{Bombal}, where it is obtained a characterization of $(V^\ast)$ sets in Banach spaces. First we prove the next lemma.
\begin{lemma} \label{l:1-limited-l1}
If a subset $A$ of a locally convex space $E$ is a $(1,\infty)$-limited set, then  $T(A)$ is relatively compact for every operator $T:E\to \ell_1$.
\end{lemma}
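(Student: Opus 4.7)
The plan is to combine the compactness criterion in $\ell_1$ from Proposition \ref{p:compact-ell-p}(i) with a sliding-hump construction applied through the adjoint $T^\ast$. Since $A$ is bounded by (ii) of Lemma \ref{l:limited-set-1}, the set $T(A)\subseteq\ell_1$ is bounded, so relative compactness of $T(A)$ amounts to verifying that
\[
\lim_{m\to\infty}\,\sup_{a\in A}\sum_{n\geq m} |T(a)_n|=0.
\]
I would argue by contradiction and assume this fails.

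Then there exist $\varepsilon>0$, strictly increasing integers $0=n_0<n_1<n_2<\cdots$, and elements $a_k\in A$ such that the pairwise disjoint intervals $I_k:=[n_{k-1},n_k)$ satisfy $\sum_{n\in I_k} |T(a_k)_n|>\tfrac{\varepsilon}{2}$ for every $k\in\w$. This is a standard sliding hump: at step $k$, the failure of the above limit yields some $a_k\in A$ with $\sum_{n\geq n_{k-1}}|T(a_k)_n|>\varepsilon$, after which one uses $T(a_k)\in\ell_1$ to pick $n_k>n_{k-1}$ with $\sum_{n\geq n_k}|T(a_k)_n|<\varepsilon/2$. Next, for each $k$ I would define $\xi_k:=\sum_{n\in I_k}\sgn\big(T(a_k)_n\big)\,e^\ast_n\in(\ell_1)'=\ell_\infty$ and set $\chi_k:=T^\ast(\xi_k)\in E'$. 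Using that the intervals $I_k$ are pairwise disjoint, for every $x\in E$ one gets
\[
\sum_{k\in\w} |\langle \chi_k,x\rangle|\leq \sum_{k\in\w}\sum_{n\in I_k}|T(x)_n|\leq \|T(x)\|_{\ell_1}<\infty,
\]
so $\{\chi_k\}_{k\in\w}$ is weak$^\ast$ $1$-summable in $E'$. Since $A$ is $(1,\infty)$-limited, it follows that $\|\chi_k\|_A\to 0$; on the other hand, the sign choice in $\xi_k$ forces $\|\chi_k\|_A\geq \langle \chi_k,a_k\rangle=\sum_{n\in I_k}|T(a_k)_n|>\varepsilon/2$, the desired contradiction.

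The only genuinely delicate point is to carry out the sliding hump so that the blocks $I_k$ are truly disjoint intervals: this pairwise disjointness is precisely what makes $\{\chi_k\}_{k\in\w}$ weak$^\ast$ $1$-summable (the contributions to $\sum_k|\langle\chi_k,x\rangle|$ telescope inside $\|T(x)\|_{\ell_1}$), and it also allows the signs in $\xi_k$ to be tuned to each $a_k$ separately without interfering across different $k$. Everything else is a formal bookkeeping exercise built on the compactness criterion and the definition of $(1,\infty)$-limited sets.
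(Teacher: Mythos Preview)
Your proof is correct and follows essentially the same contradiction/sliding-hump strategy as the paper's argument: both exploit the compactness criterion of Proposition~\ref{p:compact-ell-p}(i) to obtain large tail sums on disjoint blocks, and both build out of these blocks a weak$^\ast$ $1$-summable sequence in $E'$ that witnesses failure of $(1,\infty)$-limitedness. The only notable technical difference is how the modulus is recovered from a finite sum: the paper invokes Rudin's Lemma~6.3 to select subsets $F_j\subseteq[r_j,s_j]$ with $\big|\sum_{n\in F_j}\langle\chi_n,a_j\rangle\big|>\tfrac{\varepsilon}{4}$ (which works uniformly over $\IF=\RR$ or $\CC$), whereas you multiply by signs $\sgn\big(T(a_k)_n\big)$ to force $\langle\chi_k,a_k\rangle=\sum_{n\in I_k}|T(a_k)_n|$ directly. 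Your route is slightly more elementary and gives the sharper bound $\tfrac{\varepsilon}{2}$; just make sure that in the complex case $\sgn(z)=\bar z/|z|$ so that the identity $\sgn(z)\cdot z=|z|$ holds.
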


\begin{proof}
Suppose for a contradiction that $T(A)$ is not relatively compact in $\ell_1$ for some operator $T:E\to \ell_1$. By (i) of Proposition 4.17 of \cite{Gab-Pel}, 
$T(x)=(\langle \chi_n,x\rangle)_{n\in\w}$ for some equicontinuous weak$^\ast$ $1$-summable sequence $\{\chi_n\}_{n\in\w}$ in $E'$. Then Proposition \ref{p:compact-ell-p}
implies that there are $\e>0$, a sequence $r_0<s_0<r_1<s_1<\cdots$ in $\w$, and a sequence $\{a_j\}_{j\in\w}$ in $A$ such that
\[
\sum_{n=r_j}^{s_j} |\langle \chi_n,a_j\rangle| >\e \;\; \mbox{ for every } \; j\in\w.
\]
For every $j\in\w$, by Lemma 6.3 of \cite{Rudin},  there is a subset $F_j$ of $[r_j,s_j]$ such that
\begin{equation} \label{equ:V*-1}
\Big| \sum_{n\in F_j} \langle \chi_n,a_j\rangle\Big| >\tfrac{\e}{4}.
\end{equation}
For every $j\in\w$, set $\eta_j:= \sum_{n\in F_j}\chi_n$. Then the sequence $\{\eta_j\}_{j\in\w}$ is weak$^\ast$ $1$-summable in $E'$. By (\ref{equ:V*-1}), we have
\[
\sup_{a\in A} |\langle \eta_j,a\rangle|\geq |\langle \eta_j,a_j\rangle|>\tfrac{\e}{4},\quad \mbox{ for every }\; j\in\w,
\]
and hence $A$ is not a $(1,\infty)$-limited set, a contradiction.\qed
\end{proof}


\begin{proposition} \label{p:char-1-limited}
For a bounded subset $A$ of a barrelled locally convex space $E$ the following assertions are equivalent:
\begin{enumerate}
\item[{\rm(i)}] $A$ is a $(1,\infty)$-limited set;
\item[{\rm(ii)}] $T(A)$ is relatively compact for every operator $T:E\to \ell_1$;
\item[{\rm(iii)}] for any weak$^\ast$ $1$-summable sequence $\{\chi_n\}_{n\in\w}$ in $E'$, it follows
\[
\lim_{m\to\infty} \sup\Big\{ \sum_{m\leq n} |\langle \chi_n,x\rangle|: x\in A\Big\}=0.
\]
\end{enumerate}
\end{proposition}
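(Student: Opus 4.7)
The plan is to prove the cycle (i)$\Rightarrow$(ii)$\Rightarrow$(iii)$\Rightarrow$(i), exploiting the fact that in the barrelled setting, every weak$^\ast$ $1$-summable sequence in $E'$ is equicontinuous (since it is weak$^\ast$ bounded and barrelledness equates weak$^\ast$ boundedness with equicontinuity). This equicontinuity is what lets us move freely between sequences in $E'$ and operators into $\ell_1$.

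\textbf{Step 1: (i)$\Rightarrow$(ii).} This is exactly the content of Lemma \ref{l:1-limited-l1}, which does not even require barrelledness, so nothing new is needed.

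\textbf{Step 2: (ii)$\Rightarrow$(iii).} Given a weak$^\ast$ $1$-summable sequence $\{\chi_n\}_{n\in\w}$ in $E'$, barrelledness of $E$ ensures that $\{\chi_n\}$ is equicontinuous (it is weak$^\ast$ null, hence weak$^\ast$ bounded). Invoking (i) of Proposition 4.17 of \cite{Gab-Pel}, the assignment $T(x):=\bigl(\langle \chi_n,x\rangle\bigr)_{n\in\w}$ defines an operator $T:E\to\ell_1$. By hypothesis, $T(A)$ is relatively compact in $\ell_1$. Proposition \ref{p:compact-ell-p}(i) then translates relative compactness of $T(A)$ into precisely the uniform tail condition
\[
\lim_{m\to\infty}\sup\Bigl\{\sum_{m\leq n}|\langle\chi_n,x\rangle|:x\in A\Bigr\}=0,
\]
which is (iii).

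\textbf{Step 3: (iii)$\Rightarrow$(i).} This is immediate: for any weak$^\ast$ $1$-summable sequence $\{\chi_n\}_{n\in\w}$ in $E'$, the $m$-th tail supremum in (iii) dominates $\sup_{x\in A}|\langle\chi_m,x\rangle|$, so the latter tends to $0$, which is exactly the definition of $A$ being $(1,\infty)$-limited. No calculation is required.

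The only substantive step is Step 2, where one must produce an $\ell_1$-valued operator from an abstract weak$^\ast$ $1$-summable sequence; the barrelledness assumption is precisely what makes this construction legitimate via Proposition 4.17 of \cite{Gab-Pel}. The remaining implications are either already proved (Step 1) or trivially follow from the statement of (iii) (Step 3), so the proof is essentially a packaging of the known correspondence between equicontinuous weak$^\ast$ $1$-summable sequences and operators into $\ell_1$, combined with the classical description of relatively compact subsets of $\ell_1$.
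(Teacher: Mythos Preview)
Your proof is correct and follows the same cycle (i)$\Rightarrow$(ii)$\Rightarrow$(iii)$\Rightarrow$(i) as the paper, with the same ingredients: Lemma~\ref{l:1-limited-l1} for the first implication, the operator-to-sequence correspondence plus Proposition~\ref{p:compact-ell-p} for the second, and the trivial domination for the third. One small point: for the sequence-to-operator direction in Step~2 the paper invokes Proposition~4.19 of \cite{Gab-Pel} (which uses barrelledness directly) rather than 4.17(i), which the paper uses only in the reverse direction.
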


\begin{proof}
(i)$\Ra$(ii) follows from Lemma \ref{l:1-limited-l1}.
\smallskip

(ii)$\Ra$(iii) Let $\{\chi_n\}_{n\in\w}$ be a weak$^\ast$ $1$-summable sequence in $E'$. Then, by Proposition 4.19 of \cite{Gab-Pel}, 
there is an operator $T:E\to \ell_1$ such that $T(x):=(\langle\chi_n,x\rangle)$ for every $x\in E$. By (ii), the set $T(A)$ is relatively compact in $\ell_1$. 
Therefore, by Proposition \ref{p:compact-ell-p},
we obtain
\[
\lim_{m\to\infty} \sup\Big\{ \sum_{m\leq n} |y_n| : y=(y_n)\in T(A)\Big\} =0.
\]
It remains to note that if $y=(y_n)=T(x)$ for some $x\in A$, then $y_n=\langle\chi_n,x\rangle$ for all $n\in\w$.
\smallskip

(iii) implies (i) since $\sup_{a\in A} |\langle \chi_m,a\rangle| \leq\sup\big\{ \sum_{m\leq n} |\langle \chi_n,x\rangle|: x\in A\big\}\to 0$ for every  weak$^\ast$ $1$-summable sequence $\{\chi_n\}_{n\in\w}$ in $E'$.\qed
\end{proof}

\begin{corollary} \label{c:weakly-seq-V*}
Weakly sequentially precompact subsets and precompact subsets of a barrelled locally convex space $E$ are $(1,\infty)$-limited.
\end{corollary}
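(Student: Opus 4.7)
The plan is to treat the two assertions separately, using different pieces of already-established machinery. The precompact case is an immediate consequence of Proposition \ref{p:precompact-p-limited}(ii), provided one first observes that ``barrelled'' implies ``$1$-barrelled.'' The weakly sequentially precompact case will be handled through the operator characterization in Proposition \ref{p:char-1-limited}(ii), by exploiting the Schur property of $\ell_1$.

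For the precompact case, I would argue as follows. Let $\{\chi_n\}_{n\in\w}$ be any weak$^\ast$ $1$-summable sequence in $E'$. For every $x\in E$ the sequence $(\langle\chi_n,x\rangle)$ lies in $\ell_1$ and in particular is bounded, so $\{\chi_n\}_{n\in\w}$ is $\sigma(E',E)$-bounded. As $E$ is barrelled, the sequence is equicontinuous; thus $E$ is $1$-barrelled. Proposition \ref{p:precompact-p-limited}(ii) with $p=1$ then gives that every precompact subset of $E$ is $(1,\infty)$-limited.

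For the weakly sequentially precompact case, let $A\subseteq E$ be weakly sequentially precompact. By (the equivalence (i)$\Leftrightarrow$(ii) of) Proposition \ref{p:char-1-limited}, it is enough to prove that $T(A)$ is relatively compact in $\ell_1$ for every operator $T:E\to\ell_1$. So fix such a $T$ and any sequence $\{x_n\}_{n\in\w}$ in $A$. By hypothesis, some subsequence $\{x_{n_k}\}_{k\in\w}$ is weakly Cauchy in $E$. Since $T$ is weak-to-weak continuous, $\{T(x_{n_k})\}_{k\in\w}$ is weakly Cauchy in $\ell_1$. The space $\ell_1$ has the Schur property, so for any strictly increasing $(k_j)$ the weakly null sequence $T(x_{n_{k_{j+1}}})-T(x_{n_{k_j}})$ is norm null; thus $\{T(x_{n_k})\}_{k\in\w}$ is norm Cauchy in $\ell_1$. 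Hence $T(A)$ is sequentially precompact in the complete metric space $\ell_1$, and therefore relatively compact.

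There is essentially no serious obstacle here: the proof is an assembly of the operator characterization of $(1,\infty)$-limited sets, the fact that barrelledness upgrades to $1$-barrelledness for any weakly $1$-summable sequence argument, and the classical Schur property of $\ell_1$. The only point that deserves care is keeping straight that ``sequentially precompact'' (every sequence has a Cauchy subsequence) coincides with ``relatively compact'' inside a complete metric space such as $\ell_1$, which is what allows one to conclude relative compactness of $T(A)$ from the norm Cauchy property of the extracted subsequences.
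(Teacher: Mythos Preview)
Your proof is correct. The weakly sequentially precompact case follows exactly the paper's line: apply the operator characterization from Proposition~\ref{p:char-1-limited} and use the Schur property of $\ell_1$ to upgrade weakly Cauchy images to norm Cauchy ones (the paper packages this step as a citation of Lemma~\ref{l:seq-precom-Schur}, which you have simply unfolded inline).

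For the precompact case your route differs slightly from the paper's. You invoke Proposition~\ref{p:precompact-p-limited}(ii) after observing that barrelled implies $1$-barrelled, whereas the paper handles \emph{both} cases uniformly through Proposition~\ref{p:char-1-limited}: if $A$ is precompact then $T(A)$ is precompact in $\ell_1$ for any $T\in\LL(E,\ell_1)$, hence relatively compact by completeness of $\ell_1$. The paper's approach has the advantage of a single unified argument; your approach has the advantage of reusing Proposition~\ref{p:precompact-p-limited} directly and avoiding the operator detour for precompact sets. Both are equally short and valid.
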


\begin{proof}
Let $A$ be a weakly sequentially precompact subset of $E$  or a precompact subset of $E$. Then for every operator $T:E\to \ell_1$, the image $T(A)$ is also weakly sequentially precompact or precompact in $\ell_1$. By Lemma \ref{l:seq-precom-Schur}, $T(A)$ is (sequentially) precompact and hence relatively compact in $\ell_1$. Thus, by Proposition \ref{p:char-1-limited}, $A$ is  a $(1,\infty)$-limited set.\qed
\end{proof}

It is natural to characterize spaces for which {\em all} relatively weakly sequentially $p$-compact sets are $(q,\infty)$-limited. For the case when $E$ is a Banach space and $q=\infty$, the next proposition gives $(a)\Leftrightarrow(b)$ of Proposition 2.10 of \cite{FZ-p}.

\begin{proposition} \label{p:ws-p-compact-q-limited}
Let $1\leq p\leq q\leq \infty$. For  a locally convex space $E$, the following assertions are equivalent:
\begin{enumerate}
\item[{\rm(i)}] all relatively weakly sequentially $p$-compact subsets of $E$ are $(q,\infty)$-limited {\rm(}resp., $(q,\infty)$-$\EE$-limited{\rm)};
\item[{\rm(ii)}] every weakly $p$-summable sequence in $E$ is $(q,\infty)$-limited {\rm(}resp., $(q,\infty)$-$\EE$-limited{\rm)}.
\end{enumerate}
\end{proposition}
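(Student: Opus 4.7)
\emph{Plan.} The forward direction (i) $\Rightarrow$ (ii) is essentially immediate. Given a weakly $p$-summable sequence $\{x_n\}$ in $E$, I claim the set $\{x_n:n\in\w\}$ is relatively weakly sequentially $p$-compact: any sequence extracted from it either has a constant subsequence (trivially weakly $p$-convergent) or contains a subsequence of the original $\{x_n\}$, which as a subsequence of a weakly $p$-summable sequence is itself weakly $p$-summable and hence weakly $p$-convergent to $0$. Applying (i) then yields that $\{x_n\}$ is $(q,\infty)$-limited (resp., $(q,\infty)$-$\EE$-limited).

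For the converse (ii) $\Rightarrow$ (i), let $A$ be relatively weakly sequentially $p$-compact; $A$ is automatically bounded (any unbounded sequence in $A$ escaping every $nU$ could have no bounded, hence no weakly $p$-convergent, subsequence), so the diagonal characterization (xii) of Lemma \ref{l:limited-set-1} is applicable. I argue by contradiction. If $A$ failed to be $(q,\infty)$-limited (resp., $(q,\infty)$-$\EE$-limited), then by the negation of (xii), after a first passage to a subsequence and re-indexing (preserving weak$^\ast$ $q$-summability and, in the equicontinuous version, equicontinuity) one obtains $\e>0$, a sequence $\{x_n\}\subseteq A$, and a (resp., equicontinuous) weak$^\ast$ $q$-summable sequence $\{\chi_n\}\subseteq E'$ with
\[
|\langle \chi_n,x_n\rangle|>\e \qquad \text{for all } n\in\w.
\]

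Next, using relative weak sequential $p$-compactness of $A$, I extract a further subsequence $\{x_{n_k}\}$ weakly $p$-convergent to some $x\in E$; then $\{x_{n_k}-x\}$ is weakly $p$-summable, and hypothesis (ii) makes $B:=\{x_{n_k}-x:k\in\w\}$ a $(q,\infty)$-limited (resp., $(q,\infty)$-$\EE$-limited) set. The sequence $\{\chi_{n_k}\}$ remains weak$^\ast$ $q$-summable (and still equicontinuous in the other case), so applying the definition of $(q,\infty)$-limitedness to $B$ and $\{\chi_{n_k}\}$ gives
\[
|\langle \chi_{n_k},x_{n_k}-x\rangle| \leq \sup_{b\in B}|\langle \chi_{n_k},b\rangle|\longrightarrow 0.
\]
Since $\{\chi_n\}$ is in particular weak$^\ast$ null (any weak$^\ast$ $q$-summable sequence is weak$^\ast$ null for $q\leq\infty$), we have $\langle \chi_{n_k},x\rangle\to 0$, and the triangle inequality yields $|\langle \chi_{n_k},x_{n_k}\rangle|\to 0$, contradicting $|\langle \chi_{n_k},x_{n_k}\rangle|>\e$.

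The main technical care is the bookkeeping through the two successive subsequence extractions --- in particular, checking that equicontinuity is inherited at each step in the $\EE$-limited case --- but this is routine. The conceptual crux is the reduction via (xii) from the uniform estimate $\sup_{a\in A}|\langle\chi_n,a\rangle|$ to the diagonal estimate $|\langle\chi_n,x_n\rangle|$, which is exactly the form in which the weakly $p$-convergent subsequence supplied by $A$ can be paired with hypothesis (ii).
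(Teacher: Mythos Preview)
Your proof is correct and follows essentially the same argument as the paper's. The only cosmetic differences are that in (ii)$\Rightarrow$(i) you route through the diagonal criterion (xii) of Lemma~\ref{l:limited-set-1} (and accordingly first verify $A$ is bounded), whereas the paper works directly from the definition by choosing $a_n\in A$ with $|\langle\chi_n,a_n\rangle|\geq\tfrac{1}{2}\sup_{a\in A}|\langle\chi_n,a\rangle|$; after that, both proofs extract a weakly $p$-convergent subsequence, apply (ii) to $\{a_{n_k}-a\}$, and finish with the same triangle-inequality estimate.
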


\begin{proof}
(i)$\Ra$(ii) is clear because every weakly $p$-summable sequence is relatively weakly sequentially $p$-compact.

(ii)$\Ra$(i) Suppose for a contradiction that there is a relatively weakly sequentially $p$-compact subset $A$ of $E$ which is not  $(q,\infty)$-limited (resp., $(q,\infty)$-$\EE$-limited). Then there are a weak$^\ast$ (resp., equicontinuous) $q$-summable sequence $\{\chi_n\}_{n\in\w}$ in $E'$ and $\e>0$ such that $\sup_{a\in A} |\langle\chi_n,a\rangle|\geq \e$ for every $n\in\w$. For every $n\in\w$, choose $a_n\in A$ such that
\begin{equation} \label{equ:ws-p-compact-q-limited-1}
|\langle\chi_n,a_n\rangle|\geq \tfrac{\e}{2}.
\end{equation}
Since $A$ is relatively weakly sequentially $p$-compact, there are $a\in E$ and  a subsequence $\{a_{n_k}\}_{k\in\w}$ of $\{a_n\}_{n\in\w}$ such that $\{a_{n_k}-a\}_{n\in\w}$ is weakly $p$-summable. Taking into account that $\{\chi_{n_k}\}_{k\in\w}$ is also weak$^\ast$ null, (ii) and (\ref{equ:ws-p-compact-q-limited-1}) imply
\[
\tfrac{\e}{2}\leq |\langle\chi_{n_k},a_{n_k}\rangle|\leq |\langle\chi_{n_k},a_{n_k}-a\rangle|+|\langle\chi_{n_k},a\rangle|\leq \sup_{i\in\w} |\langle\chi_{n_k},a_{n_i}-a\rangle|+|\langle\chi_{n_k},a\rangle|\to 0,
\]
a contradiction. \qed
\end{proof}

If $E$ is a Banach space and $p=\infty$, the next theorem was proved by Grothendieck in \cite{Grothen}. 
Note that Example 3.5 of \cite{Gab-DP} 
 shows that our result is indeed more general than the Grothenfieck theorem.
\begin{theorem} \label{t:weakly-mu-p-limited}
Let $p\in[1,\infty]$, $E$ be a weakly  $p$-angelic and locally complete space, and let $H:=\big(E',\mu(E',E)\big)$. Then a bounded subset $B$ of $H$ is precompact if and only if it is a $(p,\infty)$-limited set.
\end{theorem}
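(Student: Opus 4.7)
My plan is to exploit the Mackey duality $\langle H,H'\rangle=\langle E',E\rangle$. Two structural facts underpin the argument: by Mackey--Arens, the equicontinuous subsets of $H'=E$ are precisely those lying in some absolutely convex $\sigma(E,E')$-compact $K\subseteq E$; and the weak$^\ast$ topology on $H'$ coincides with $\sigma(E,E')$, so weak$^\ast$ $p$-summable sequences in $H'$ are exactly the weakly $p$-summable sequences in $E$.

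\textbf{Necessity.} Suppose $B$ is precompact in $H$. Proposition~\ref{p:precompact-p-limited}(i), applied to the space $H$ itself, guarantees that $B$ is $(p,\infty)$-$\EE$-limited in $H$; I shall upgrade this to $(p,\infty)$-limited by arguing that every weakly $p$-summable sequence $\{x_n\}$ in $E=H'$ is in fact equicontinuous in $H'$. To this end, $\{x_n\}$ is trivially relatively weakly sequentially $p$-compact (witnessed by itself, with limit $0$), so the weakly $p$-angelic hypothesis makes it relatively weakly compact. The weak closure $\overline{\{x_n\}}^{\,\sigma(E,E')}$ is then a separable weakly compact subset of $E$, and invoking a Krein-type theorem available in the locally complete setting yields that its closed absolutely convex hull is weakly compact; this is the absolutely convex weakly compact $K\subseteq E$ that contains $\{x_n\}$.

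\textbf{Sufficiency.} Assume $B$ is bounded and $(p,\infty)$-limited in $H$ but, for contradiction, not precompact in $H=(E',\mu(E',E))$. Since $\mu(E',E)$ is generated by the seminorms $p_K(\chi):=\sup_{x\in K}|\langle\chi,x\rangle|$ as $K$ ranges over absolutely convex $\sigma(E,E')$-compact subsets of $E$, some such $K$ witnesses non-precompactness of $B|_K$ in $C(K,\sigma(E,E'))$ with the sup norm. Since $B|_K$ is pointwise bounded (as $B$ is bounded in $H$), the Arzel\`{a}--Ascoli theorem furnishes a point $x_0\in K$ and $\e>0$ at which $B|_K$ fails to be equicontinuous. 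The Fr\'{e}chet--Urysohn property of $(K,\sigma(E,E'))$ (from weakly $p$-angelic) then supplies a sequence $x_n\in K$ with $x_n\to x_0$ weakly and $\chi_n\in B$ satisfying $|\langle\chi_n,x_n-x_0\rangle|>\e$. Setting $y_n:=x_n-x_0\in 2K$, the sequence $\{y_n\}$ is weakly null and sits in the weakly compact set $2K$; by the weakly $p$-angelic hypothesis again, $\{y_n\}$ has a weakly $p$-convergent subsequence whose limit must be $0$, yielding a weakly $p$-summable subsequence $\{y_{n_k}\}$ in $E=H'$. The $(p,\infty)$-limitedness of $B$ then forces $\sup_{\chi\in B}|\langle\chi,y_{n_k}\rangle|\to 0$, contradicting $|\langle\chi_{n_k},y_{n_k}\rangle|>\e$.

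The main obstacle I anticipate is the Krein step in the necessity direction: one must secure in the merely locally complete (rather than quasi-complete) setting that the closed absolutely convex hull of the separable weakly compact set $\overline{\{x_n\}}^{\,\sigma(E,E')}$ arising from a weakly $p$-summable sequence is itself weakly compact. The sufficiency direction is essentially bookkeeping once the weakly $p$-angelic hypothesis is channelled in two roles---Fr\'{e}chet--Urysohn of weakly compact sets for extracting an initial sequence, and relatively weakly sequentially $p$-compact $=$ relatively weakly compact for upgrading that sequence from weakly null to weakly $p$-summable.
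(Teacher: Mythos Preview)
Your proof is correct in both directions, but the two halves compare differently to the paper's.

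For necessity, your detour through the weakly $p$-angelic hypothesis and a Krein-type theorem is unnecessary, and the ``main obstacle'' you flag dissolves immediately: a weakly $p$-summable sequence $\{x_n\}$ in $E$ is in particular weakly null, and since local completeness is a duality invariant (the bounded closed disks in $E$ and $E_w$ coincide), $E_w$ is also locally complete; hence $\cacx\{x_n\}$ is compact in $E_w$, i.e.\ weakly compact in $E$. This is exactly the absolutely convex weakly compact set you need, obtained with no appeal to Krein and without invoking weakly $p$-angelic at all. The paper proceeds this way and then finishes, as you do, via equicontinuity and Proposition~3.9.8 of \cite{horvath}.

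For sufficiency, your Arzel\`a--Ascoli argument is genuinely different from the paper's and more elementary. The paper invokes Grothendieck's abstract duality criterion (Theorem~12 of \cite[p.~91]{Grothendieck}) to reduce the statement to showing that every absolutely convex weakly compact $K\subseteq E$ is precompact for the topology $\TTT$ on $E$ of uniform convergence on $(p,\infty)$-limited subsets of $H$; it then proves $\TTT|_K=\sigma(E,E')|_K$ by using the weakly $p$-angelic hypothesis to approximate weak cluster points by weakly $p$-convergent sequences. Your route instead extracts the relevant weakly $p$-summable sequence directly from the failure of equicontinuity of $B|_K$ at a point of $K$, bypassing the abstract machinery. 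Both arguments ultimately exploit weakly $p$-angelic in the same two ways---Fr\'echet--Urysohn on weakly compact sets for sequence extraction, and the equality of relatively weakly compact with relatively weakly sequentially $p$-compact for upgrading a weakly null sequence to a weakly $p$-summable subsequence---so the difference is organizational rather than conceptual; your version has the advantage of being self-contained.
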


\begin{proof}
Assume that $B$ is a $\mu(E',E)$-precompact subset of $E'$. Let $S=\{x_n\}_{n\in\w}$ be a weak$^\ast$ $p$-summable sequence in $H'=E$. Then $S$ is weakly $p$-summable in $E$. Since $E$ is locally complete, the set $K:=\cacx(S)$ is weakly compact, and hence, by the Mackey--Arens theorem, $K^\circ$ is a neighborhood of zero in $H$. Consequently, $K=K^{\circ\circ}$ and hence also $S$ are $\mu(E',E)$-equicontinuous. The weak$^\ast$ $p$-summability of $S$ implies that $S$ is weak$^\ast$ null. Hence, by Proposition 3.9.8 of \cite{horvath}, $x_n\to 0$ in the topology of uniform convergence on precompact subsets of $H$, in particular, $x_n\to 0$ uniformly on $B$. Since $S$ was arbitrary, by definition this means that $B$ is a $(p,\infty)$-limited subset of $H$.
\smallskip

Conversely, assume that $B$ is a $(p,\infty)$-limited subset of $H$. Let $u=\Id_E:E\to E$ be the identity map, $\mathfrak{S}$ be the family of all absolutely convex weakly compact subsets of $E$, $\mathfrak{T}$ be the family of all $(p,\infty)$-limited subsets of $H$. Then the equivalence of (1) and (1') in Theorem 12 of \cite[p.~91]{Grothendieck} can be formulated as follows:  the $(p,\infty)$-limited sets of $H$ are precompact if and only if any set $K\in \mathfrak{S}$ is precompact for the topology $\TTT$ of uniform convergence on all $(p,\infty)$-limited sets of $H$. Therefore to prove that $B$ is precompact it suffices to show that any  $K\in \mathfrak{S}$ is $\TTT$-precompact. To this end, fix a $K\in \mathfrak{S}$.

We claim that the topology $\TTT$ and the weak topology $\sigma(E,E')$ coincide on $K$. By (xi) of Lemma \ref{l:limited-set-1}, we have $\sigma(E,E')\subseteq \TTT$. Therefore to prove the claim we have to show only that any $\TTT$-closed subset $A$ of $K$ is also weakly closed. Let $z\in \overline{A}^{\,\sigma(E,E')}$. Since $E$ is a  weakly  $p$-angelic space and $K$ is weakly compact, (i) of Lemma 3.6 of \cite{Gab-DP} 
implies that there is a sequence $\{x_n\}_{n\in\w}$ in $A$ which weakly $p$-converges to $z$, i.e., the sequence $\{x_n-z\}_{n\in\w}$ is weakly $p$-summable. By the definition of $(p,\infty)$-limited sets we obtain that $x_n\to z$ in the topology $\TTT$ and hence $z\in A$. Thus $A$ is weakly closed. The claim is proved.

Since $K$ is weakly compact, the claim implies that also $K$ is compact for the topology $\TTT$.\qed
\end{proof}

It is convenient to formulate Theorem \ref{t:weakly-mu-p-limited} in a dual form.

\begin{corollary} \label{c:weakly-mu-p-limited-dual}
Let $p\in[1,\infty]$, and let $E$ be a Mackey space. If the space $E'_{w^\ast}$ is weakly $p$-angelic and  locally complete, then a bounded subset $A$ of $E$ is precompact if and only if it is a $(p,\infty)$-limited set.
\end{corollary}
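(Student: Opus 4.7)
The plan is to deduce the corollary from Theorem \ref{t:weakly-mu-p-limited} by a duality swap: set $F := E'_{w^\ast}$ and apply the theorem to $F$ in place of $E$.

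First I would identify the ingredients on the $F$ side. The dual of $F = E'_{w^\ast}$ is $F' = E$ (as vector spaces), since the dual of a space endowed with its weak$^\ast$ topology is the original predual. Consequently the weak topology on $F$ is $\sigma(F,F') = \sigma(E',E)$, which is the very topology of $F = E'_{w^\ast}$. This means that the hypothesis ``$E'_{w^\ast}$ is weakly $p$-angelic'' translates verbatim into ``$F$ is weakly $p$-angelic'' in the sense required by Theorem~\ref{t:weakly-mu-p-limited}, and the hypothesis ``$E'_{w^\ast}$ is locally complete'' gives local completeness of $F$ directly.

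Next I would identify the target space $H$ from the theorem. With $F' = E$, we get
\[
H = \bigl(F', \mu(F',F)\bigr) = \bigl(E, \mu(E,E')\bigr).
\]
Because $E$ is a Mackey space by hypothesis, its original topology coincides with $\mu(E,E')$, so $H$ equals $E$ as a topological vector space. In particular, ``bounded'' and ``precompact'' for subsets of $H$ mean the same as in $E$.

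Applying Theorem~\ref{t:weakly-mu-p-limited} to $F$ then yields: a bounded subset $A$ of $H$ is precompact if and only if it is $(p,\infty)$-limited in $H$. Since $H = E$ as a topological vector space (and in particular the two spaces share the same dual $E'$), the notion of $(p,\infty)$-limited subset is unchanged by (vii) of Lemma~\ref{l:limited-set-1}. This delivers the stated equivalence for bounded subsets of $E$. There is no real obstacle here, only the bookkeeping verification that (a) the hypotheses on $E'_{w^\ast}$ match those demanded of ``$E$'' in Theorem~\ref{t:weakly-mu-p-limited}, and (b) the Mackey assumption is exactly what is needed to identify the dual Mackey space with $E$ itself so that the conclusion transfers back.
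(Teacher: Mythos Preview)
Your proposal is correct and follows essentially the same route as the paper: set $E_1:=E'_{w^\ast}$, identify $E_1'=E$ and $H_1=(E_1',\mu(E_1',E_1))=(E,\mu(E,E'))=E$ using the Mackey hypothesis, then apply Theorem~\ref{t:weakly-mu-p-limited}. The paper adds only the explicit remark (via Mackey--Arens) that $\mu(E_1',E_1)=\mu(E,E')$, which you use implicitly when passing from $(F',\mu(F',F))$ to $(E,\mu(E,E'))$.
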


\begin{proof}
Set $E_1:=E'_{w^\ast}$ (so $E_1$ carries its weak topology) and $H_1:=\big(E_1', \mu(E'_1,E_1)\big)$. Then $E'_1=E$ algebraically. Since, by the Mackey--Arens theorem, the polars of the weak$^\ast$ compact absolutely convex subsets of $E'_{w^\ast}=E_1$ define the Mackey topology on $E$ and the Mackey topology $\mu(E'_1,E_1)$ on $E'_1=E$, we obtain $\mu(E'_1,E_1)=\mu(E,E')$. As $E$ is a Mackey space it follows  $H_1=E$. Now Theorem  \ref{t:weakly-mu-p-limited} applies.\qed
\end{proof}

Theorem 5.6 of \cite{Gab-Pel} 
(which states that a Mackey space $E$ is $p$-barrelled if and only if $E'_{w^\ast}$ is locally complete) and Corollary \ref{c:weakly-mu-p-limited-dual} imply
\begin{corollary} \label{c:weakly-mu-p-ld}
Let $p\in[1,\infty]$, and let $E$ be a Mackey $p$-barrelled space {\rm(}for example, $E$ is barrelled{\rm)}. If $E'_{w^\ast}$ is a weakly $p$-angelic space, then a bounded subset $A$ of $E$ is precompact if and only if it is a $(p,\infty)$-limited set.
\end{corollary}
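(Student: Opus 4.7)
The plan is to reduce the corollary directly to Corollary \ref{c:weakly-mu-p-limited-dual}, feeding its hypotheses using the cited Theorem 5.6 of \cite{Gab-Pel}. Concretely, Corollary \ref{c:weakly-mu-p-limited-dual} needs three ingredients on $E$: that it is a Mackey space, that $E'_{w^\ast}$ is weakly $p$-angelic, and that $E'_{w^\ast}$ is locally complete. The first two are among the standing hypotheses, so the only substantive step is to produce local completeness of $E'_{w^\ast}$.

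First I would record the ``for example'' parenthetical: if $E$ is barrelled, then $E$ is Mackey (every barrelled lcs is Mackey) and $E$ is $p$-barrelled for every $p\in[1,\infty]$ (each weakly $p$-summable sequence in $E'_{w^\ast}$ is in particular $\sigma(E',E)$-bounded, and by barrelledness it is therefore equicontinuous). Hence the barrelled case is indeed a special instance of the hypotheses of the corollary, and it suffices to handle the general Mackey $p$-barrelled case.

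Next, I would invoke Theorem 5.6 of \cite{Gab-Pel} in the stated direction: since $E$ is Mackey and $p$-barrelled, $E'_{w^\ast}$ is locally complete. Combined with the assumptions that $E$ is Mackey and $E'_{w^\ast}$ is weakly $p$-angelic, all three hypotheses of Corollary \ref{c:weakly-mu-p-limited-dual} are met. Applying that corollary gives the desired characterization: a bounded subset $A\subseteq E$ is precompact if and only if it is $(p,\infty)$-limited.

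There is no genuine obstacle here; the argument is essentially bookkeeping, and the only thing to be careful about is ensuring that the direction of Theorem 5.6 of \cite{Gab-Pel} being used is the one going from ``Mackey and $p$-barrelled'' to ``$E'_{w^\ast}$ locally complete,'' which matches the formulation already quoted in the excerpt just before the corollary.
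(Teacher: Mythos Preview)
Your proposal is correct and matches the paper's own proof essentially verbatim: the paper simply cites Theorem~5.6 of \cite{Gab-Pel} to obtain local completeness of $E'_{w^\ast}$ from the Mackey $p$-barrelled hypothesis, and then invokes Corollary~\ref{c:weakly-mu-p-limited-dual}. Your added justification of the parenthetical ``for example, $E$ is barrelled'' is a harmless elaboration that the paper leaves implicit.
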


The case $p=\infty$ is of independent interest.
\begin{corollary} \label{c:weakly-mu-inf-limited}
Let a locally convex space $E$ satisfy one of the following conditions:
\begin{enumerate}
\item[{\rm(i)}] $E$ is a Mackey $c_0$-barrelled space such that $E'_{w^\ast}$ is a weakly angelic space;
\item[{\rm(ii)}] $E$ is a reflexive space  such that $E'_{\beta}$ is a weakly angelic space;
\item[{\rm(iii)}] $E$ is a separable Mackey $c_0$-barrelled space.
\end{enumerate}
Then a bounded subset $A$ of $E$ is precompact if and only if it is a limited set. Moreover, if in addition $E$ is von Neumann complete, then a bounded subset $A$ of $E$ is  relatively compact if and only if it is a limited set.
\end{corollary}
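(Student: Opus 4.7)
The strategy is to reduce the three cases to Corollary~\ref{c:weakly-mu-p-ld} applied with $p=\infty$; note that $\infty$-barrelled $=c_0$-barrelled and $(\infty,\infty)$-limited $=$ limited. That corollary then asserts: if $E$ is a Mackey $c_0$-barrelled space with $E'_{w^\ast}$ weakly $\infty$-angelic, then a bounded subset of $E$ is precompact if and only if it is limited. Once this is established, the \emph{moreover} clause is immediate: von Neumann completeness forces every closed precompact subset of $E$ to be compact, so precompact and relatively compact bounded subsets coincide.

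Case (i) is the direct statement of the hypotheses of Corollary~\ref{c:weakly-mu-p-ld} for $p=\infty$, using that any angelic space is weakly $\infty$-angelic because, in an angelic space, the families of relatively countably compact, relatively sequentially compact, and relatively compact sets coincide, and compact sets are Fr\'{e}chet--Urysohn.

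For case (ii), I would note that reflexivity of $E$ implies $E$ is barrelled, hence Mackey and $c_0$-barrelled. Since $E''=E$, the topological dual of $E'_\beta$ is $E$ and therefore the weak topology on $E'_\beta$ coincides with $\sigma(E',E)$. Consequently $(E'_\beta)_w$ and $E'_{w^\ast}$ coincide as topological spaces, so the assumption that $E'_\beta$ is weakly angelic translates to $E'_{w^\ast}$ being angelic, which in turn is weakly $\infty$-angelic.

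Case (iii) is the substantive one; here $E$ is Mackey and $c_0$-barrelled by hypothesis and the point is to verify that $E'_{w^\ast}$ is weakly $\infty$-angelic, i.e.\ that $E'_{w^\ast}$ itself is angelic. Fix a countable dense set $D\subseteq E$. The restriction map $E'\to\mathbb{F}^D$ is a continuous injection from $E'_{w^\ast}$ into the metrizable product, injectivity coming from continuity of members of $E'$ and density of $D$. Restricted to any weak$^\ast$ compact $K\subseteq E'$, this is a continuous bijection of a compact space onto a Hausdorff space, hence a homeomorphism: weak$^\ast$ compact subsets of $E'$ are metrizable, and in particular Fr\'{e}chet--Urysohn. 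The remaining equality of relatively weak$^\ast$ sequentially compact and relatively weak$^\ast$ compact subsets follows along the classical Banach-space argument, using $c_0$-barrelledness to ensure that weak$^\ast$ convergent sequences (being weak$^\ast$-null shifts of their limits) sit inside metrizable equicontinuous polars, so that a relatively weak$^\ast$ sequentially compact set has weak$^\ast$ closure in a union of such metrizable compacta. I expect this last step---pasting the metrizability on equicontinuous pieces to get angelicity of $E'_{w^\ast}$ globally in the $c_0$-barrelled (rather than fully barrelled) setting---to be the main technical obstacle; once it is in place, Corollary~\ref{c:weakly-mu-p-ld} delivers the conclusion in all three cases, and the \emph{moreover} statement follows from completeness.\qed
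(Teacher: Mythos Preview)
Your treatment of cases (i) and (ii) and of the \emph{moreover} clause matches the paper's proof essentially verbatim.

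The genuine gap is in case (iii). You correctly set up the continuous injection $E'_{w^\ast}\hookrightarrow \mathbb{F}^D$ into a metrizable space, but then you try to manufacture angelicity of $E'_{w^\ast}$ by hand---metrizing weak$^\ast$ compact sets, invoking $c_0$-barrelledness to trap sequences in equicontinuous polars, and then ``pasting''---and you yourself flag this last step as the main technical obstacle. It is an obstacle you do not need to face. The standard fact you are circling around is that \emph{any regular topological space admitting a coarser metrizable topology is angelic} (this is classical; see e.g.\ Floret, \emph{Weakly Compact Sets}, or the angelic-space lemmas cited around Pryce's paper). Since $E'_{w^\ast}$ carries its own weak topology, ``$E'_{w^\ast}$ is angelic'' is exactly ``$E'_{w^\ast}$ is weakly angelic'', and then Proposition~3.4 of \cite{Gab-DP} (weakly angelic $\Rightarrow$ weakly $\infty$-angelic) puts you back in case (i). No use of $c_0$-barrelledness is required for the angelicity step; that hypothesis is consumed only by Corollary~\ref{c:weakly-mu-p-ld} itself.

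So the fix is short: replace your entire second half of case (iii) with the single observation that the injection into $\mathbb{F}^D$ gives $E'_{w^\ast}$ a coarser metrizable topology, hence $E'_{w^\ast}$ is angelic, and apply (i). This is exactly what the paper does.
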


\begin{proof}
(i) Proposition 3.4 of \cite{Gab-DP} 
states that every weakly angelic space $E$ is weakly $\infty$-angelic. Now Corollary \ref{c:weakly-mu-p-ld} applies. 

(ii) Recall that any reflexive space is barrelled,  see Proposition 11.4.2 of \cite{Jar}. Since $E$ is semi-reflexive, $E'_{\beta}$ is a weakly angelic space if and only if so is  $E'_{w^\ast}$. Now (i) applies.

(iii) Since $E$ is separable, the space $E'_{w^\ast}$ admits a weaker metrizable locally convex topology $\TTT$. Therefore $(E',\TTT)$ and hence also $E'_{w^\ast}$ are even (weakly) angelic spaces. By Proposition 3.4 of \cite{Gab-DP}, 
the space $E'_{w^\ast}$ is  weakly (sequentially) $\infty$-angelic. Now (i) applies.

The last assertion follows from the fact that if in addition $E$ is von Neumann complete, then any precompact subset of $E$ is relatively compact.\qed
\end{proof}


\section{Coarse $p$-limited sets} \label{sec:coarse}


Below we summarize some basic properties of coarse $p$-limited sets, cf. Proposition 2 of \cite{GalMir}.
\begin{lemma} \label{l:coarse-p-limited}
Let $p,q\in[1,\infty]$, and let $(E,\tau)$ be a locally convex space. Then:
\begin{enumerate}
\item[{\rm(i)}] every coarse $p$-limited subset of $E$ is bounded;
\item[{\rm(ii)}] the family $\mathsf{CL}_p(E)$ of all coarse $p$-limited  sets in $E$ is closed under taking subsets, finite unions and sums, and closed absolutely convex hulls;
\item[{\rm(iii)}] if $L$ is a locally convex space and $T\in \LL(E,L)$ and if $A\subseteq E$ is coarse $p$-limited, then $T(A)$ is  a coarse $p$-limited subset of $L$;
\item[{\rm(iv)}] a  subset $A$ of $E$ is a coarse $p$-limited set  if and only if every countable subset of $A$ is a coarse $p$-limited set;
\item[{\rm(v)}] {\rm(\cite[Remark~2]{GalMir})} in general, even for Banach spaces there is no inclusion relationships between the class of coarse $p$-limited sets and  the class of coarse $q$-limited sets for $p\not= q$;
\item[{\rm(vi)}] in general the property of being a coarse $p$-limited set is not the property of  the duality $(E,E')$.
\end{enumerate}
\end{lemma}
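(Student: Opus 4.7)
The plan is to dispatch items (i)--(iv) by direct arguments leveraging continuity of linear maps into $\ell_p$ (or $c_0$) and elementary compactness facts in Banach spaces, to quote (v) from \cite{GalMir}, and to construct an explicit example for (vi).

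For (i), every $\chi\in E'$ produces a rank-one operator $T_\chi\in\LL(E,\ell_p)$ (or $\LL(E,c_0)$) given by $T_\chi(x):=(\chi(x),0,0,\dots)$, so $T_\chi(A)$ is relatively compact, hence bounded, forcing $\sup_{a\in A}|\chi(a)|<\infty$; thus $A$ is weakly bounded and therefore bounded in $E$. For (ii), closure under subsets is trivial; closure under finite unions follows from $T(A_1\cup A_2)=T(A_1)\cup T(A_2)$ and the fact that a finite union of relatively compact sets is relatively compact; closure under sums follows from $T(A_1+A_2)=T(A_1)+T(A_2)\subseteq\overline{T(A_1)}+\overline{T(A_2)}$, with the latter set compact as the continuous image of a compact product; closure under closed absolutely convex hulls follows from linearity and continuity of $T$ giving $T(\cacx A)\subseteq\cacx T(A)$, together with Mazur's theorem, which ensures that $\cacx T(A)$ is compact in $\ell_p$ (or $c_0$) whenever $T(A)$ is relatively compact. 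Item (iii) is immediate because $S\circ T\in\LL(E,\ell_p)$ for any $S\in\LL(L,\ell_p)$, so $S(T(A))$ is relatively compact.

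Item (iv) has trivial necessity from (ii). For sufficiency, fix $T\in\LL(E,\ell_p)$ and note that, by metrisability of $\ell_p$ (or $c_0$), relative compactness is equivalent to relative sequential compactness. Given an arbitrary sequence $(a_n)_{n\in\w}\subseteq A$, the countable set $\{a_n:n\in\w\}$ is coarse $p$-limited by hypothesis, so $(T(a_n))_{n\in\w}$ admits a convergent subsequence; hence $T(A)$ is relatively compact. Item (v) is cited verbatim from Remark~2 of \cite{GalMir}.

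The crux is (vi). I would exhibit $E:=\ell_2$ with $p=2$, comparing its norm topology $\tau$ with $\sigma(E,E')$; these are compatible by definition. The identity operator $\id\in\LL(\ell_2,\ell_2)$ shows that $B_{\ell_2}$ is \emph{not} coarse $2$-limited in $(E,\tau)$. On the other hand, any $T\in\LL\bigl((E,\sigma(E,E')),\ell_2\bigr)$ has finite rank: continuity at $0$ makes $T^{-1}(B_{\ell_2})$ a weak neighbourhood of $0$, hence it contains $M:=\bigcap_{i=1}^{n}\ker\chi_i$ for some $\chi_1,\dots,\chi_n\in E'$; since $M$ is absorbed by its own scalar multiples, the inclusion $tM\subseteq T^{-1}(B_{\ell_2})$ for all $t\in\ff$ forces $M\subseteq\ker T$, so $T$ factors through the finite-dimensional quotient $E/M$. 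As $B_{\ell_2}$ remains bounded for $\sigma(E,E')$ and $T(B_{\ell_2})$ lies in a finite-dimensional subspace of $\ell_2$, it is relatively compact; hence $B_{\ell_2}$ is coarse $2$-limited in $(E,\sigma(E,E'))$ but not in $(E,\tau)$, yielding (vi). The finite-rank argument is the only nontrivial step and constitutes the expected obstacle.
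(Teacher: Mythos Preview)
Your proof is correct and largely parallels the paper's, but two items differ in approach and are worth noting. For (iv), the paper argues by contradiction via the explicit tail-sum characterization of relative compactness in $\ell_p$ (Proposition \ref{p:compact-ell-p}): it extracts a sequence $\{a_m\}$ in $A$ witnessing the failure and then contradicts the assumed coarse $p$-limitedness of that sequence. Your route---using that relative compactness equals relative sequential compactness in the metric space $\ell_p$ (or $c_0$) and lifting an arbitrary sequence in $T(A)$ back to a countable subset of $A$---is more elementary and works uniformly without invoking the structure of $\ell_p$; it would apply verbatim with any Banach target. For (vi), both arguments are the same in spirit: the paper takes $E=\ell_p$ for arbitrary $p$ and cites an external lemma for the finite-dimensionality of operators from $E_w$ to $\ell_p$, while you specialize to $p=2$ and supply that finite-rank argument explicitly. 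Your direct proof of the finite-rank step is a nice self-contained touch that the paper outsources.
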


\begin{proof}
The clauses (i)-(iii) are clear. 

(iv) The necessity follows from (ii). To prove the sufficiency we consider the case $1\leq p<\infty$ since the case $p=\infty$ can be considered analogously. Suppose for a contradiction that $A$ is not a coarse $p$-limited  set in $E$.  Then there is an operator $T:E\to\ell_p$ such that $T(A)$ is not relatively compact in $\ell_p$. Then, by Proposition \ref{p:compact-ell-p}, there is $\e>0$ such that
\[
\sup_{ a\in A}\Big\{ \sum_{m\leq n} |\langle T^\ast(e^\ast_n), a\rangle|^p\Big\} \geq \e \;\; \mbox{ for every } m\in\w.
\]
For every $m\in\w$, choose $a_m\in A$ such that $\sum_{m\leq n} |\langle T^\ast(e^\ast_n), a_m\rangle|^p>\tfrac{\e}{2}$. By assumption the sequence $\{a_m\}_{m\in\w}$ is a coarse $p$-limited set. Therefore, by Proposition \ref{p:compact-ell-p} and the choice of $a_m$, we have
\[
\tfrac{\e}{2}<\sup_{m\in \w}\Big\{ \sum_{m\leq n} |\langle T^\ast(e^\ast_n), a_m\rangle|^p\Big\} \to 0,
\]
a contradiction.
\smallskip

(vi) Let $1\leq p<\infty$, and let $E=\ell_p$ (for $p=\infty$, one can consider $E=c_0$). Then the unit ball $B_E$ is not a coarse $p$-limited set in $E$ (if $\Id:E\to \ell_p$ is the identity map then $\Id(B_E)$ is not relatively compact in $\ell_p$). However, since every $T\in \LL(E_w,\ell_p)$ is finite-dimensional by Lemma 17.18 of \cite{Gab-Pel} 
it follows that $B_E$ is a coarse $p$-limited set in $E_w$. \qed
\end{proof}

If $1\leq p<\infty$, Proposition 1 of \cite{GalMir} states that every $p$-limited subset of a Banach space is a coarse $p$-limited set. Below we generalize this result.
\begin{proposition} \label{p:pV*-coarse-p-lim}
Let $E$ be a locally convex space. Then:
\begin{enumerate}
\item[{\rm(i)}] if $p\in[1,\infty]$ and $S_p\big(\LL(E,\ell_p)\big)\subseteq\ell_p^w(E'_\beta)$, then every $(p,p)$-$(V^\ast)$ subset of $E$ is a coarse $p$-limited set;
\item[{\rm(ii)}]  if $1<p<\infty$, then every $(p,p)$-$(V^\ast)$ subset $A$ of $E$ is a coarse $p$-limited set;
\item[{\rm(iii)}] if $p=\infty$ and $S_\infty\big(\LL(E,c_0)\big)=c_0^w(E'_\beta)$, then the class of $\infty$-$(V^\ast)$ subsets of $E$ coincides with the class of coarse $\infty$-limited subsets of $E$;
\item[{\rm(iv)}] if $p\in[1,\infty]$, then  every $p$-limited subset of $E$ is  coarse $p$-limited; in particular, every finite subset of $E$ is coarse $p$-limited.
\end{enumerate}
\end{proposition}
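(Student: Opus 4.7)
My plan is to run a single unified argument through all four clauses, based on the identity
\[
\sup_{y \in T(A)} |y_n| \;=\; \sup_{a \in A} |\langle T^\ast(e_n^\ast), a\rangle|, \qquad n \in \w,
\]
valid for every $T \in \LL(E, \ell_p)$ (or $\LL(E, c_0)$ if $p = \infty$). Whenever the sequence $S_p(T) = (T^\ast(e_n^\ast))$ happens to be weakly $p$-summable in $E'_\beta$ (or weakly null, in the case $p = \infty$), the $(p,p)$-$(V^\ast)$ (resp.\ $\infty$-$(V^\ast)$) property of $A$ places the sequence of suprema above in $\ell_p$ (resp.\ $c_0$); from the tail estimate $\sup_{a \in A}\sum_{n \ge m}|\langle T^\ast(e_n^\ast), a\rangle|^p \le \sum_{n \ge m} (\sup_{a \in A}|\langle T^\ast(e_n^\ast), a\rangle|)^p$, the characterization of relative compactness in $\ell_p$ and $c_0$ from Proposition~\ref{p:compact-ell-p} then forces $T(A)$ to be relatively compact; boundedness of $T(A)$ is automatic since $(p,p)$-$(V^\ast)$ sets are bounded and $T$ is continuous. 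This yields (i) at once, and also the forward implication of (iii) (the $p = \infty$ case, with Proposition~\ref{p:compact-ell-p}(ii) in place of (i)).

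For (ii) I must verify the hypothesis of (i) when $1 < p < \infty$, that is, $S_p(T) \in \ell_p^w(E'_\beta)$ for every $T \in \LL(E, \ell_p)$. My plan is to check that the transpose $T^\ast : \ell_{p^\ast} \to E'_\beta$ is continuous (for any bounded $B \subseteq E$, H\"older gives $\|T^\ast(\psi)\|_B = \sup_{y \in T(B)} |\psi(y)| \le \|\psi\|_{\ell_{p^\ast}} \cdot \sup_{y \in T(B)} \|y\|_{\ell_p}$, with $T(B)$ bounded in $\ell_p$), and that the dual basis $(e_n^\ast)$ is weakly $p$-summable in $\ell_{p^\ast}$ (for $\phi \in \ell_p = (\ell_{p^\ast})'$, $(\langle \phi, e_n^\ast\rangle) = (\phi_n) \in \ell_p$). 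Since continuous linear maps between locally convex spaces preserve weak $p$-summability, $(T^\ast(e_n^\ast))$ is weakly $p$-summable in $E'_\beta$ and (i) applies. This step is the one real obstacle in the whole proposition: the same argument collapses at the endpoints, because $(e_n^\ast)$ is neither weakly $1$-summable in $\ell_\infty$ nor weakly null in $\ell_1$, which is precisely why (ii) is stated only in the reflexive range.

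For the converse in (iii), I would suppose $A$ is coarse $\infty$-limited and $(\chi_n)$ is weakly null in $E'_\beta$. The hypothesis $S_\infty(\LL(E, c_0)) = c_0^w(E'_\beta)$ realizes $\chi_n = T^\ast(e_n^\ast)$ for some $T \in \LL(E, c_0)$; then relative compactness of $T(A)$ in $c_0$ combined with Proposition~\ref{p:compact-ell-p}(ii) gives $\sup_{a \in A}|\langle \chi_n, a\rangle| = \sup_{y \in T(A)} |y_n| \to 0$, so $A$ is an $\infty$-$(V^\ast)$ set.

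Finally, (iv) sidesteps the difficulty of (ii) altogether: the sequence $S_p(T)$ is \emph{always} weak$^\ast$ $p$-summable in $E'$ (or weak$^\ast$ null in $E'$, when $p = \infty$), with no hypothesis on $E$, simply because $\langle T^\ast(e_n^\ast), x\rangle = [T(x)]_n$ and $T(x) \in \ell_p$ (respectively, $c_0$). Hence if $A$ is $p$-limited, then $(\sup_{a \in A}|\langle T^\ast(e_n^\ast), a\rangle|) \in \ell_p$ (or $c_0$), and the same tail argument as in (i) forces $T(A)$ to be relatively compact. The assertion on finite sets follows from Lemma~\ref{l:limited-set-1}(xi).
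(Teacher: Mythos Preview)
Your proof is correct and follows essentially the same approach as the paper's: both rest on the identity $\sup_{y\in T(A)}|y_n|=\sup_{a\in A}|\langle T^\ast(e_n^\ast),a\rangle|$, the tail estimate, and Proposition~\ref{p:compact-ell-p}. The only difference is that for (ii) and (iv) the paper cites Proposition~4.17 of \cite{Gab-Pel} for the fact that $(T^\ast(e_n^\ast))$ is weakly $p$-summable in $E'_\beta$ (respectively, weak$^\ast$ $p$-summable in $E'$), whereas you supply short direct verifications of these facts, making your argument more self-contained.
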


\begin{proof}
(i) Let $A$ be a $(p,p)$-$(V^\ast)$ subset of $E$, and let $T:E\to \ell_p$ (or $T:E\to c_0$ if $p=\infty$) be an operator. For every $n\in\w$, we set $\chi_n:=T^\ast(e^\ast_n)$. Then the inclusion $S_p\big(\LL(E,\ell_p)\big)\subseteq\ell_p^w(E'_\beta)$ implies that the sequence $\{\chi_n\}_n$ is weakly $p$-summable in $E'_\beta$. Since $A$ is a $(p,p)$-$(V^\ast)$ set, it follows that
\begin{equation} \label{equ:pV*-coarse-p-lim-1}
\big( \sup_{a\in A}|\langle\chi_n,a\rangle|\big) \in \ell_p \;\mbox{ (or $\in c_0$ if $p=\infty$). }
\end{equation}
Assume that $p<\infty$. Then (\ref{equ:pV*-coarse-p-lim-1}) implies
\[
\sup\left\{ \sum_{n=m}^\infty |\langle e^\ast_n,T(a)\rangle|^p: a\in A\right\} \leq \sum_{n=m}^\infty\Big(\sup_{a\in A}|\langle e^\ast_n,T(a)\rangle|\Big)^p <\infty.
\]
Therefore, by (i) of Proposition \ref{p:compact-ell-p}, $T(A)$ is relatively compact in $\ell_p$. Thus $A$ is  a coarse $p$-limited set.

If $p=\infty$, then (\ref{equ:pV*-coarse-p-lim-1}) yields $\lim_{n\to\infty} \sup\big\{|\langle e^\ast_n,T(a)\rangle|: a\in A\big\}=0$. Hence, by (ii) of Proposition \ref{p:compact-ell-p}, $T(A)$ is relatively compact in $c_0$. Thus $A$ is  a coarse $\infty$-limited set.
\smallskip

(ii) Assume that $1<p<\infty$. Then, by (iii) of Proposition 4.17 of \cite{Gab-Pel}, 
$S_p\big(\LL(E,\ell_p)\big)\subseteq\ell_p^w(E'_\beta)$. Thus, by (i), $A$ is a coarse $p$-limited set.
\smallskip

(iii) Taking account (i), we have to prove that every coarse $\infty$-limited subset $A$ of $E$ is an $\infty$-$(V^\ast)$ set. To this end, let $\{\chi_n\}_n$ be a weakly $\infty$-summable sequence in $E'_\beta$. Then the equality $S_\infty\big(\LL(E,c_0)\big)=c_0^w(E'_\beta)$ implies that there is $T\in\LL(E,c_0)$ such that $\chi_n=T^\ast(e^\ast_n)$ for every $n\in\w$. Since $A$ is a  coarse $\infty$-limited set we obtain that $T(A)$ is a relatively compact subset of $c_0$. Therefore, by (ii) of Proposition \ref{p:compact-ell-p}, we have
\[
\sup\big\{ |\langle \chi_n,a\rangle|: a\in A\big\}= \sup\big\{ |\langle e^\ast_n,T(a)\rangle|: a\in A\big\}\to 0 \; \mbox{ as $n\to\infty$},
\]
which means that $A$ is an $\infty$-$(V^\ast)$ set.
\smallskip

(iv) Let $A$ be a $p$-limited subset of $E$, and let $T:E\to \ell_p$ (or $T:E\to c_0$ if $p=\infty$) be an operator. For every $n\in\w$, we set $\chi_n:=T^\ast(e^\ast_n)$. Then, by Proposition 4.17(i) of \cite{Gab-Pel}, 
the sequence $\{\chi_n\}_n$ is weakly $p$-summable in $E'_{w^\ast}$. Now proceeding exactly as in (i) we obtain that $A$ is a coarse $p$-limited subset of $E$.

For the last assertion it suffices to note that, by (xi) of Lemma \ref{l:limited-set-1}, 
every finite subset of $E$ is a $p$-limited set.\qed
\end{proof}

\begin{proposition} \label{p:product-coarse-p-lim}
Let  $p\in[1,\infty]$, and let $\{E_i\}_{i\in I}$  be a non-empty family of locally convex spaces. Then:
\begin{enumerate}
\item[{\rm(i)}] a subset $K$ of $E=\prod_{i\in I} E_i$ is a coarse $p$-limited set if and only if so are all its coordinate projections;
\item[{\rm(ii)}]  a subset $K$ of  $E=\bigoplus_{i\in I} E_i$  is a coarse $p$-limited set if and only if so are all its coordinate projections   and the support of $K$ is finite.
\end{enumerate}
\end{proposition}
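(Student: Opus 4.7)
The plan is to follow the pattern of the proof of Proposition \ref{p:product-sum-limited-set}. The necessity is immediate in both cases: each coordinate projection is a continuous linear operator, so (iii) of Lemma \ref{l:coarse-p-limited} gives that every $K_i := \pi_i(K)$ is coarse $p$-limited in $E_i$; for the direct sum case, $K$ is bounded by (i) of Lemma \ref{l:coarse-p-limited}, and every bounded subset of a locally convex direct sum has finite support.

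For the sufficiency in (i), I would fix an operator $T : \prod_{i \in I} E_i \to \ell_p$ (or to $c_0$ if $p = \infty$) and set $\chi_n := T^\ast(e_n^\ast)$. By Proposition 4.17(i) of \cite{Gab-Pel}, the sequence $\{\chi_n\}_{n\in\w}$ is weak$^\ast$ $p$-summable in $E' = \bigoplus_{i\in I} E'_i$. The decisive step is to invoke Lemma 4.18 of \cite{Gab-Pel}—already used in the proof of Proposition \ref{p:product-sum-limited-set}—to obtain a \emph{finite} set $F \subseteq I$ such that, writing $\chi_n = (\chi_{i,n})_{i\in I}$, we have $\chi_{i,n} = 0$ whenever $i \in I \SM F$, and each coordinate sequence $\{\chi_{i,n}\}_{n\in\w}$ is weak$^\ast$ $p$-summable in $E'_i$. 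For each $i \in F$ this sequence defines, again via Proposition 4.17(i) of \cite{Gab-Pel}, an operator $T_i : E_i \to \ell_p$ (or $c_0$) by $T_i(y) := (\langle \chi_{i,n}, y \rangle)_{n\in\w}$, and by hypothesis each $T_i(K_i)$ is relatively compact.

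To conclude, I would observe that for every $x = (x(i))_{i\in I} \in K$,
\[
T(x) = \bigl( \langle \chi_n, x\rangle\bigr)_{n\in\w} = \Bigl( \sum_{i\in F} \langle \chi_{i,n}, x(i)\rangle\Bigr)_{n\in\w} = \sum_{i\in F} T_i(x(i)),
\]
so $T(K) \subseteq \sum_{i\in F} T_i(K_i)$. A finite Minkowski sum of relatively compact subsets of $\ell_p$ (or of $c_0$) is relatively compact, which gives that $T(K)$ is relatively compact and hence $K$ is coarse $p$-limited.

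For (ii), the assumption that $F := \supp(K)$ is finite reduces the sum $\bigoplus_{i\in I} E_i$ to the finite subsum $\bigoplus_{i\in F} E_i = \prod_{i\in F} E_i$ containing $K$. Given any operator $T : \bigoplus_{i\in I} E_i \to \ell_p$ (or to $c_0$ if $p = \infty$), the restrictions $T_i := T \circ \iota_i : E_i \to \ell_p$ (or $c_0$) along the canonical embeddings $\iota_i : E_i \to \bigoplus_{j \in I} E_j$ satisfy $T(x) = \sum_{i\in F} T_i(x(i))$ for all $x \in K$, and the same finite Minkowski-sum argument yields that $T(K)$ is relatively compact. The principal (and essentially only) obstacle is the structural fact about weak$^\ast$ $p$-summable sequences in duals of products; once it is available, everything reduces to a finite sum of relatively compact images in $\ell_p$ or $c_0$.
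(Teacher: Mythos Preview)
Your proof is correct, and for part (ii) it is essentially identical to the paper's. For part (i), however, you reach the key finiteness step by a different route. The paper argues directly: any operator $T$ from a product $\prod_{i\in I} E_i$ into a normed space has $\{0\}^F \times \prod_{i\in I\SM F} E_i$ in its kernel for some finite $F$ (Lemma~2.6 of \cite{Gab-Pel}); after arranging $0\in K$, this gives $T(K) \subseteq \sum_{i\in F} T(\iota_i(K_i))$ immediately. You instead pass to the adjoint, locating the finite set $F$ via the finite-support property of the weak$^\ast$ $p$-summable sequence $\{T^\ast(e_n^\ast)\}$ in $\bigoplus_{i\in I} E'_i$ (Lemma~4.18 of \cite{Gab-Pel}), mirroring the proof of Proposition~\ref{p:product-sum-limited-set}. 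Both routes work; the paper's is more direct since it avoids the adjoint altogether and does not need the weak$^\ast$ $p$-summability of $\{T^\ast(e_n^\ast)\}$. One small remark: your second appeal to Proposition~4.17(i) of \cite{Gab-Pel}, to \emph{construct} the operators $T_i$ from the sequences $\{\chi_{i,n}\}$, is unnecessary (and that proposition, as cited elsewhere in the paper, goes from operators to equicontinuous sequences rather than the reverse); this is harmless, since $T_i = T\circ\iota_i$ is automatically continuous---exactly the device you use in part (ii).
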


\begin{proof}
The necessity follows from (iii) of Lemma \ref{l:coarse-p-limited} since $E_i$ is a direct summand of $E$ and, for the case (ii), the well known fact that any bounded subset of a direct locally convex sum has finite support.
\smallskip


To prove the sufficiency, let $K$ be a subset of $E$ such that each coordinate projection $K_i$ of $K$ is a  coarse $p$-limited  set in $E_i$, and, for the case (ii), $K_i=\{0\}$ for all but finitely many indices $i\in I$. By (ii) of Lemma \ref{l:coarse-p-limited}, 
 we can assume that $0\in K$. We distinguish between the cases (i) and (ii).
\smallskip

(i) Let $T:E\to\ell_p$ (or $T:E\to c_0$ if $p=\infty$) be an operator. It is easy to show (see for example Lemma 2.6 in \cite{Gab-Pel}) 
that there is a finite subset $F$ of $I$ such that $\{0\}^F\times \prod_{i\in I\SM F} E_i$ is in the kernel of $T$. Then, taking into account that $0\in K$, we obtain $T(K)\subseteq \sum_{i\in F} T(K_i)$. Since, by assumption, all $T(K_i)$ are relatively compact in the Banach space $\ell_p$ (or in $c_0$) it follows that $\sum_{i\in F} T(K_i)$ (we identify $K_i$ with $K_i\times \prod_{I\SM \{i\}} \{0_i\}$) and hence also $T(K)$ are relatively compact in $\ell_p$ (or $c_0$). Thus $K$ is a  coarse $p$-limited  set in $E$.
\smallskip

(ii) Let $F\subseteq I$ be the finite support of $K$. Then $T(K)\subseteq \sum_{i\in F} T(K_i)$. As above in (i), it follows that  $T(K)$ is relatively compact in $\ell_p$ (or $c_0$). Thus $K$ is a  coarse $p$-limited  set in $E$. \qed
\end{proof}


\section{Limited type sets and $p$-convergent operators} \label{sec:small-bound-p-conv}



Let  $1\leq p\leq q\leq\infty$, and let $E$ be a locally convex space. By Lemma 7.2 of \cite{Gab-Pel} 
and Lemmas  \ref{l:limited-set-1} and \ref{l:coarse-p-limited}, the family $\mathsf{V}^\ast_{(p,q)}(E)$ of all $(p,q)$-$(V^\ast)$ sets, the family $\mathsf{L}_{(p,q)}(E)$ of all $(p,q)$-limited sets and the family $\mathsf{CL}_{p}(E)$ of all coarse $p$-limited sets in  $E$ are saturated bornologies. Therefore one can naturally define the following polar topologies on the dual space $E'$.

\begin{definition} \label{def:V-L-topology} {\em
Let $1\leq p\leq q\leq\infty$, and let $E$ be a locally convex space. Denote by $V^\ast_{(p,q)}(E',E)$ (resp., $EV^\ast_{(p,q)}(E',E)$, $L_{(p,q)}(E',E)$, $EL_{(p,q)}(E',E)$ and $CL_{p}(E',E)$) the polar topology on $E'$ of uniform convergence on $(p,q)$-$(V^\ast)$ (resp., $(p,q)$-$(EV^\ast)$, $(p,q)$-limited, $(p,q)$-$\EE$-limited or coarse $p$-limited) subsets of $E$.\qed}
\end{definition}
Since the families $\mathsf{V}^\ast_{(p,q)}(E)$ and $\mathsf{L}_{(p,q)}(E)$  depend only on the duality, the topologies $V^\ast_p(E',E)$  and $L_{(p,q)}(E',E)$ are topologies of the dual pair $(E,E')$. However, (vi) of Lemma \ref{l:coarse-p-limited} shows that the topology $CL_{p}(E',E)$ is not a topology of $(E,E')$. By this reason in what follows we consider only the topologies $\mathsf{V}^\ast_{(p,q)}(E)$ and $\mathsf{L}_{(p,q)}(E)$.

For further references we select the next simple lemma.

\begin{lemma} \label{l:V*-topology}
Let  $1\leq p\leq q\leq\infty$, and let $E$ be a locally convex space. Then:
\begin{enumerate}
\item[{\rm(i)}] $\sigma(E',E)\subseteq L_{(p,q)}(E',E) \subseteq V^\ast_{(p,q)}(E',E) \subseteq \beta(E',E)$,
\item[{\rm(ii)}] $L_{(p,q)}(E',E) \subseteq \mu(E',E)$ if and only if every $(p,q)$-limited set $A$ in $E$ is relatively weakly compact;
\item[{\rm(iii)}] $L_{(p,q)}(E',E) = \mu(E',E)$ if and only if every $(p,q)$-limited set $A$ in $E$ is relatively weakly compact and every weakly compact absolutely convex subset of $E$ is $(p,q)$-limited.
\end{enumerate}
\end{lemma}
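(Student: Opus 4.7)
Proof plan for Lemma \ref{l:V*-topology}.

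For (i) the three inclusions each reduce to a bornological inclusion, using the general fact that polar topologies are monotone in the underlying bornology: if $\mathcal{A}_1\subseteq\mathcal{A}_2$ are families of bounded sets in $E$, then uniform convergence on $\mathcal{A}_1$ is a coarser topology on $E'$ than uniform convergence on $\mathcal{A}_2$. So it suffices to show: every finite subset of $E$ is $(p,q)$-limited, every $(p,q)$-limited set is a $(p,q)$-$(V^\ast)$ set, and every $(p,q)$-$(V^\ast)$ set is bounded. The first is (xi) of Lemma \ref{l:limited-set-1}, the second is (viii) of Lemma \ref{l:limited-set-1}, and the third is (ii) of Lemma 7.2 of \cite{Gab-Pel} (which is quoted in the proof of (ii) of Lemma \ref{l:limited-set-1}).

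For (ii), by the Mackey--Arens theorem, $\mu(E',E)$ is the topology on $E'$ of uniform convergence on absolutely convex weakly compact subsets of $E$. By the bipolar theorem together with (iii) of Lemma \ref{l:limited-set-1} (closed absolutely convex hulls of $(p,q)$-limited sets remain $(p,q)$-limited), the polars of closed absolutely convex $(p,q)$-limited sets form a neighborhood base at $0$ for $L_{(p,q)}(E',E)$. Now assume every $(p,q)$-limited set is relatively weakly compact, and let $A$ be a closed absolutely convex $(p,q)$-limited set. By Mazur's theorem, $A$ is weakly closed, so $A=\overline{A}^{\,w}$ is weakly compact; thus $A^\circ$ is a $\mu(E',E)$-neighborhood, which gives $L_{(p,q)}(E',E)\subseteq \mu(E',E)$. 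Conversely, assume $L_{(p,q)}(E',E)\subseteq\mu(E',E)$ and let $A$ be a $(p,q)$-limited subset of $E$. Let $B$ be the closed absolutely convex hull of $A$; again $B$ is $(p,q)$-limited, so $B^\circ$ is a $\mu$-neighborhood of $0$ and contains some $K^\circ$ with $K\subseteq E$ absolutely convex and weakly compact. Taking bipolars and using that $B$ is already closed absolutely convex, $B=B^{\circ\circ}\subseteq K^{\circ\circ}=K$, whence $A\subseteq B\subseteq K$ is relatively weakly compact.

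For (iii), combine (ii) with the reverse inclusion $\mu(E',E)\subseteq L_{(p,q)}(E',E)$. Using the same Mackey--Arens description, $\mu(E',E)\subseteq L_{(p,q)}(E',E)$ holds if and only if for every absolutely convex weakly compact $K\subseteq E$ the polar $K^\circ$ is an $L_{(p,q)}(E',E)$-neighborhood, i.e., $K\subseteq A$ for some $(p,q)$-limited $A$. By (iii) of Lemma \ref{l:limited-set-1}, subsets of $(p,q)$-limited sets are $(p,q)$-limited, so this last condition is equivalent to every absolutely convex weakly compact subset of $E$ being itself a $(p,q)$-limited set. Combining this with (ii) yields the equivalence in (iii).

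The main issue to handle carefully is the step in (ii) that promotes ``relatively weakly compact'' to ``contained in an absolutely convex weakly compact set''. The point is that we do not need a Krein-type theorem here, because Lemma \ref{l:limited-set-1}(iii) already lets us pass to the closed absolutely convex hull within the class of $(p,q)$-limited sets, and Mazur's theorem then ensures this closed convex set is weakly closed. Everything else is a routine polarity/bipolarity computation.
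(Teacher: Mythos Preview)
Your proposal is correct and follows essentially the same approach as the paper's proof, which simply cites (viii) and (xi) of Lemma \ref{l:limited-set-1} for (i) and invokes the Mackey--Arens theorem together with the fact that $\mathsf{L}_{(p,q)}(E)$ is a saturated bornology (i.e., (iii) of Lemma \ref{l:limited-set-1}) for (ii) and (iii). You have simply unpacked these citations in full, including the Mazur/bipolar manipulations that the paper leaves implicit in the word ``saturated''.
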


\begin{proof}
(i) follows from (viii) and (xi) of Lemma \ref{l:limited-set-1}. 

(ii) and (iii) follow from the Mackey--Arens theorem and the fact that $\mathsf{L}_{(p,q)}(E)$ is a saturated bornology (see (iii) of Lemma \ref{l:limited-set-1}). \qed
\end{proof}

\begin{remark} \label{rem:V*-not-L-top} {\em
The inclusion $L_{(p,q)}(E',E) \subseteq V^\ast_{(p,q)}(E',E)$ can be strict. Indeed, let $X$ be a Tychonoff space containing an infinite functionally bounded subset. Then, by Corollary \ref{c:Cp-V*-sets-non-GP}, the space $C_p(X)$ contains $(p,q)$-$(V^\ast)$ sets which are not $(p,q)$-limited. This fact,  the inclusion $L_{(p,q)}(E',E) \subseteq V^\ast_{(p,q)}(E',E)$ and the fact that $\mathsf{V}_{(p,q)}^\ast\big(C_p(X)\big)$ and $\mathsf{L}_{(p,q)}^\ast\big(C_p(X)\big)$ are saturated bornologies imply that $L_{(p,q)}(E',E) \subsetneq V^\ast_{(p,q)}(E',E)$.\qed}
\end{remark}

It is well known that if $T\in\LL(E,L)$, then $T^\ast$ is weak$^\ast$ and strongly continuous. The following assertion shows that $T^\ast$ is also continuous with respect to the topology $L_{(p,q)}$.

\begin{proposition} \label{p:T*-p-convergent}
Let $1\leq p\leq q\leq\infty$, and let $T:E\to L$ be an operator between locally convex spaces  $E$ and $L$. Then:
 \begin{enumerate}
\item[{\rm(i)}] the adjoint map $T^\ast: \big( L',L_{(p,q)}(L',L)\big) \to \big(E', L_{(p,q)}(E',E)\big)$ is continuous;
\item[{\rm(ii)}] the adjoint map $T^\ast: L'_{w^\ast} \to \big(E', L_{(p,q)}(E',E)\big)$ is $p$-convergent.
\end{enumerate}
\end{proposition}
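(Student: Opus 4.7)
The plan is to unwind the definitions of the polar topology $L_{(p,q)}$ and exploit the stability of $(p,q)$-limited sets under continuous linear images recorded in (iv) of Lemma \ref{l:limited-set-1}, together with the adjoint relation $\langle T^\ast(\chi),a\rangle=\langle\chi,T(a)\rangle$. Since $\mathsf{L}_{(p,q)}(E)$ is a saturated bornology, a base of neighborhoods of zero for $L_{(p,q)}(E',E)$ is given by the polars $\{A^\circ: A\in\mathsf{L}_{(p,q)}(E)\}$, and analogously for $L_{(p,q)}(L',L)$.

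For (i), I would fix an arbitrary $(p,q)$-limited $A\subseteq E$. By Lemma \ref{l:limited-set-1}(iv) the image $T(A)$ is a $(p,q)$-limited subset of $L$, so that $T(A)^\circ$ is a basic zero neighborhood for $L_{(p,q)}(L',L)$. The adjoint relation then yields
\[
|\langle T^\ast(\chi),a\rangle|=|\langle\chi,T(a)\rangle|\leq 1 \quad \text{for all } \chi\in T(A)^\circ,\ a\in A,
\]
that is $T^\ast\big(T(A)^\circ\big)\subseteq A^\circ$. This proves continuity at zero, and hence by linearity the continuity of $T^\ast$ as claimed.

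For (ii), I would take an arbitrary weak$^\ast$ $p$-summable sequence $\{\chi_n\}_{n\in\w}$ in $L'$ (equivalently, a weakly $p$-summable sequence in $L'_{w^\ast}$) and show $T^\ast(\chi_n)\to 0$ in $L_{(p,q)}(E',E)$, which is exactly the required $(\infty,p)$-convergence. Fix a $(p,q)$-limited $A\subseteq E$; by Lemma \ref{l:limited-set-1}(iv), $T(A)$ is $(p,q)$-limited in $L$, so by Definition \ref{def:p-limit-coarse-set}(i),
\[
\Big(\sup_{y\in T(A)}|\langle \chi_n,y\rangle|\Big)\in\ell_q \quad \text{if } q<\infty, \quad \text{or} \quad \sup_{y\in T(A)}|\langle \chi_n,y\rangle|\to 0 \quad \text{if } q=\infty.
\]
In either case the sequence converges to zero, and since
\[
\sup_{a\in A}|\langle T^\ast(\chi_n),a\rangle|=\sup_{y\in T(A)}|\langle \chi_n,y\rangle|,
\]
it follows that $T^\ast(\chi_n)$ tends to zero uniformly on $A$, as required.

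There is no real obstacle: both assertions reduce to the same adjoint identity together with the functoriality of the class $\mathsf{L}_{(p,q)}$ under continuous linear images. The only mild point is to observe that $\ell_q$-summability implies nullity, which unifies the cases $q<\infty$ and $q=\infty$ into a single statement that $T^\ast$ is $p$-convergent (i.e., $(\infty,p)$-convergent) into $(E',L_{(p,q)}(E',E))$.
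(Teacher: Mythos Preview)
Your proof is correct and follows essentially the same route as the paper. The only cosmetic difference is in (ii): the paper first reduces to $q=\infty$ via the topology inclusion $L_{(p,q)}(E',E)\subseteq L_{(p,\infty)}(E',E)$ (coming from (vi) of Lemma~\ref{l:limited-set-1}), whereas you handle general $q$ directly by noting that $\ell_q$-membership already forces the sequence of suprema to be null; both amount to the same observation.
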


\begin{proof}
(i) To show that $T^\ast$ is continuous, let $A^\circ$ be a standard $L_{(p,q)}(E',E)$-neighborhood of zero, where $A$ is a $(p,q)$-limited set in $E$. Then, by Lemma \ref{l:limited-set-1}, $T(A)$ is a $(p,q)$-limited set in $L$. Then for every $\eta\in T(A)^\circ$ and each $a\in A$, we have $|\langle T^\ast(\eta),a\rangle|= |\langle \eta,T(a)\rangle|\leq 1$ and hence $T^\ast\big(T(A)^\circ\big) \subseteq A^\circ$. Thus $T^\ast$ is continuous.

(ii) Since $L_{(p,q)}(E',E)\subseteq L_{(p,\infty)}(E',E)$ by (vi) of Lemma \ref{l:limited-set-1}, it suffices to consider the case $q=\infty$.  Let $\{\chi_n\}_{n\in\w}$ be a weakly $p$-summable sequence in $L'_{w^\ast}$. To show that $T^\ast(\chi_n)\to 0$ in $\big(E', L_{(p,\infty)}(E',E)\big)$, fix an arbitrary $B\in\mathsf{L}_{(p,\infty)}(E)$. By Lemma \ref{l:limited-set-1}, $T(B)$ is a  $(p,\infty)$-limited set in $L$ and hence
\[
\lim_{n\to\infty}\sup_{b\in B} \big|\langle T^\ast (\chi_n), b\rangle\big|= \lim_{n\to\infty}\sup_{b\in B} \big|\langle\chi_n, T(b)\rangle\big| = 0 .
\]
Therefore $T^\ast (\chi_n)\in B^\circ$ for all sufficiently large $n\in\w$. Since $B$ was arbitrary this means that $T^\ast (\chi_n)\to 0$ in $L_{(p,\infty)}(E',E)$, as desired.\qed
\end{proof}

Below we give a complete answer to the problem posed in the introduction for the $(p,\infty)$-case (namely, characterize those operators $T$ which map {\em  all  bounded } sets into $(p,q)$-limited sets or into coarse $p$-limited  sets). We are interested in this special case because it is dually connected with $p$-convergent operators, see in particular  Theorem \ref{t:bounded-to-p-lim} and Theorem \ref{t:qp*-convergent-Limited} for the case $q=\infty$.

One can naturally also ask when  the topology $L_{(p,q)}(E',E)$ in (ii) of Proposition \ref{p:T*-p-convergent} can be replaced by the strong topology $\beta(E',E)$. We answer this question in the next theorem. 
For $1\leq p<\infty$, it generalizes a characterization of operators $T$ between Banach spaces for which $T^\ast$ is $p$-convergent, see  Ghenciu \cite{Ghenciu-pGP}.
Following Definition 13.8 of \cite{Gab-Pel}, if $p\in[1,\infty]$, a locally convex space $E$ is called {\em weakly sequentially $p$-complete} if every weakly $p$-Cauchy sequence is weakly $p$-convergent.

\begin{theorem} \label{t:bounded-to-p-lim}
Let $p\in[1,\infty]$, and let $T:E\to L$ be an operator between  locally convex spaces  $E$ and $L$. Then the following assertions are equivalent:
\begin{enumerate}
\item[{\rm(i)}] for every $B\in\Bo(E)$, the image $T(B)$ is a $(p,\infty)$-limited set in $L$;
\item[{\rm(ii)}] $T^\ast: L'_{w^\ast}  \to E'_\beta$  is $p$-convergent.
\end{enumerate}
If $L'_{w^\ast} $ is sequentially complete and $T^\ast:L'_{w^\ast} \to \big(E'_\beta\big)_w$ is sequentially continuous, then {\em(i)} and {\em(ii)} are equivalent to
\begin{enumerate}
\item[{\rm(iii)}] $T^\ast\circ S$ is a sequentially precompact operator for any operator $S:\ell_{p^\ast} \to L'_{w^\ast} $ $($or $S:c_0 \to L'_{w^\ast} $ if $p=1$$)$.
\end{enumerate}
If $1< p<\infty$, $L'_{w^\ast} $ is sequentially complete  and $T^\ast:L'_{w^\ast} \to \big(E'_\beta\big)_w$ is sequentially continuous, then {\rm(i)-(iii)} are equivalent to the following
\begin{enumerate}
\item[{\rm(iv)}] $T^\ast\circ S$ is a sequentially compact operator for any operator $S:\ell_{p^\ast} \to L'_{w^\ast} $.
\end{enumerate}
If $p=1$, $E'_\beta$ and $L'_{w^\ast} $ are sequentially complete,  $T^\ast:L'_{w^\ast} \to \big(E'_\beta\big)_w$ is sequentially continuous and $L'_{w^\ast}$ is weakly sequentially $1$-complete, then {\rm(i)-(iii)} are equivalent to the following
\begin{enumerate}
\item[{\rm(v)}] $T^\ast\circ S$ is a sequentially compact operator for any operator $S:c_0 \to L'_{w^\ast} $.
\end{enumerate}
\end{theorem}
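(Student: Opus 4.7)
The equivalence (i)$\Leftrightarrow$(ii) comes from directly unraveling the definitions: for any bounded $B\subseteq E$ and any weak$^\ast$ $p$-summable sequence $\{\chi_n\}\subseteq L'$, the identity
\[
\sup_{b\in B}|\langle T^\ast(\chi_n),b\rangle|=\sup_{b\in B}|\langle\chi_n,T(b)\rangle|
\]
shows that $T(B)$ is $(p,\infty)$-limited in $L$ for every such $B$ precisely when $T^\ast(\chi_n)\to 0$ in $E'_\beta$ for every such sequence, which is exactly the definition of $p$-convergence of $T^\ast:L'_{w^\ast}\to E'_\beta$. I would dispatch this in a few lines, using (xii) of Lemma \ref{l:limited-set-1} if it is convenient to work term by term rather than with suprema.

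For (ii)$\Leftrightarrow$(iii) under the extra hypotheses, the key bridge is the correspondence $S\leftrightarrow(S(e_n^\ast))_n$ between operators $S:\ell_{p^\ast}\to L'_{w^\ast}$ (or $S:c_0\to L'_{w^\ast}$ when $p=1$) and weakly $p$-summable sequences in $L'_{w^\ast}$: for any such sequence $\{\chi_n\}$, the partial sums of $\sum y_n\chi_n$ are Cauchy in $L'_{w^\ast}$, and the sequential completeness of $L'_{w^\ast}$ places their limit in $L'$, giving a continuous $S$ with $S(e_n^\ast)=\chi_n$. For (iii)$\Rightarrow$(ii), (iii) provides a Cauchy subsequence of $(T^\ast(\chi_n))=(T^\ast S(e_n^\ast))$ in $E'_\beta$; sequential continuity of $T^\ast:L'_{w^\ast}\to (E'_\beta)_w$ combined with $\chi_n\to 0$ in $L'_{w^\ast}$ gives $T^\ast(\chi_n)\to 0$ pointwise on $E$, and a Cauchy sequence in $E'_\beta$ converging pointwise to $0$ must converge to $0$ in $E'_\beta$; running this through every subsequence yields $T^\ast(\chi_n)\to 0$ in $E'_\beta$. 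For (ii)$\Rightarrow$(iii), fix $S$ and a sequence $(y_n)$ in the closed unit ball of its domain; for $1<p<\infty$ reflexivity of $\ell_{p^\ast}$ supplies a weakly convergent subsequence and for $p=1$ Rosenthal's $\ell_1$-theorem (applicable since $c_0$ contains no copy of $\ell_1$) supplies a weakly Cauchy one, so the differences $z_k=y_{n_{k+1}}-y_{n_k}$ are weakly null. A Bessaga--Pe\l czy\'nski-type dichotomy then yields, after further thinning, either $\|z_k\|\to 0$ rapidly enough that $(z_k)$ is weakly $p$-summable (arranging $\|z_{k_j}\|\le 2^{-j}$ suffices) or a basic subsequence equivalent to the canonical basis, again weakly $p$-summable. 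Continuity of $S$ transfers this to a weakly $p$-summable sequence in $L'_{w^\ast}$, and the $p$-convergence of $T^\ast$ forces $T^\ast(S(z_k))\to 0$ in $E'_\beta$, so $(T^\ast(S(y_{n_k})))$ is Cauchy.

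Finally, (iv) and (v) are obvious strengthenings of (iii); for (iii)$\Rightarrow$(iv) in the range $1<p<\infty$, reflexivity lets me take the subsequence to be weakly convergent to some $y$, and sequential continuity of $T^\ast:L'_{w^\ast}\to(E'_\beta)_w$ identifies the weak limit of the Cauchy sequence as $T^\ast(S(y))$, upgrading Cauchy plus known weak limit to strong convergence. For (iii)$\Rightarrow$(v) with $p=1$, sequential completeness of $E'_\beta$ directly turns Cauchy into convergent, while the weak sequential $1$-completeness of $L'_{w^\ast}$ is what allows me to produce a weak $1$-limit of the extracted subsequence inside $L'_{w^\ast}$ and thereby identify the target limit as $T^\ast(S(y))$. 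The main technical obstacle I expect is the Bessaga--Pe\l czy\'nski selection in (ii)$\Rightarrow$(iii): it has to be implemented uniformly across $p\in(1,\infty)$, $p=1$, and the delicate case $p=\infty$ (where bounded sequences in $\ell_1$ need not admit weakly Cauchy subsequences, so the selection must be rearranged to exploit weak$^\ast$-nullity of $\chi_n$ directly), while at each step verifying that the difference sequence is genuinely weakly $p$-summable so that the $p$-convergence hypothesis on $T^\ast$ can be deployed.
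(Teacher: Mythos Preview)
Your argument for (i)$\Leftrightarrow$(ii) is correct and matches the paper's. For the remaining equivalences the paper simply invokes Theorem~13.17 of \cite{Gab-Pel} applied with $E_1=L'_{w^\ast}$, $L_1=E'_\beta$ and $T_1=T^\ast$, so your attempt to argue directly is a more self-contained route, and the overall strategy---pass to a weakly Cauchy/convergent subsequence in $\ell_{p^\ast}$ or $c_0$, form differences, and extract a weakly $p$-summable subsequence by a Bessaga--Pe\l czy\'nski selection---is the natural one.

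There is, however, a concrete logical gap in your (ii)$\Rightarrow$(iii). From $T^\ast S(z_{k_j})\to 0$ in $E'_\beta$ along \emph{one} thinning of the consecutive differences $z_k=y_{n_{k+1}}-y_{n_k}$ you cannot conclude that $(T^\ast S(y_{n_k}))_k$ is Cauchy: knowing that selected differences tend to zero says nothing about all differences (think of $a_m=\sum_{i\le m}1/i$). The repair is to argue by contradiction: if $(T^\ast S(y_{n_k}))_k$ is not Cauchy, fix $B\in\Bo(E)$ and $\varepsilon>0$ and extract $a_1<b_1<a_2<\cdots$ with $\|T^\ast S(y_{n_{a_j}}-y_{n_{b_j}})\|_B>\varepsilon$ for every $j$; the sequence $w_j:=y_{n_{a_j}}-y_{n_{b_j}}$ is weakly null, so your selection applied to $(w_j)$ produces a weakly $p$-summable subsequence on which $p$-convergence of $T^\ast$ forces the $B$-seminorm to vanish, a contradiction. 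A smaller imprecision: ``equivalent to the canonical basis'' does not by itself imply weak $p$-summability (that depends on the embedding, not just on the isomorphism type of the span); what you really use is that the subsequence is a small perturbation of a \emph{block} basic sequence, whose disjoint supports make it weakly $p$-summable in $\ell_{p^\ast}$ or $c_0$. Finally, you rightly flag $p=\infty$ as unresolved: in $\ell_1$ neither reflexivity nor Rosenthal's theorem delivers a weakly Cauchy subsequence, so a different extraction (in the paper absorbed into the cited theorem) is needed.
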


\begin{proof}
(i)$\Ra$(ii) Let $\{\chi_n\}_{n\in\w}$ be a weak$^\ast$ $p$-summable sequence in $L'$. To show that $T^\ast(\chi_n)\to 0$ in $E'_\beta$, fix an arbitrary $B\in\Bo(E)$. Since $T(B)$ is a  $(p,\infty)$-limited set in $L$ we have
\[
\sup_{b\in B} \big|\langle T^\ast (\chi_n), b\rangle\big|= \sup_{b\in B} \big|\langle\chi_n, T(b)\rangle\big| \to 0 \; \mbox{ as }\; n\to\infty,
\]
and hence $T^\ast (\chi_n)\in B^\circ$ for all sufficiently large $n\in\w$. Since $B$ was arbitrary this means that $T^\ast (\chi_n)\to 0$ in $E'_\beta$, as desired.

(ii)$\Ra$(i) Let $B\in\Bo(E)$. To show that $T(B)$ is  a  $(p,\infty)$-limited set in $L$, take any weakly $p$-summable sequence $\{\chi_n\}_{n\in\w}$ in $L'_{w^\ast}$. For every $\e>0$, the polar $\e B^\circ =\big(\tfrac{1}{\e}B\big)^\circ$ is a neighborhood of zero in $E'_\beta$. Since $T^\ast$ is  $p$-convergent, we have $T^\ast(\chi_n)\to 0$ in $E'_\beta$ and hence there is $N_\e\in\w$ such that $T^\ast(\chi_n) \in \e B^\circ$ for all $n\geq N_\e$. Therefore
\[
\sup_{b\in B} \big|\langle\chi_n, T(b)\rangle\big| = \sup_{b\in B} \big|\langle T^\ast (\chi_n), b\rangle\big| \leq \e  \; \mbox{ for all }\; n\geq N_\e.
\]
As $\e$ was arbitrary it follows that $\sup_{b\in B} \big|\langle\chi_n, T(b)\rangle\big|\to 0$. Thus  $T(B)$ is  a  $(p,\infty)$-limited set.
\smallskip

The equivalences (ii)$\LRa$(iii) and (ii)$\LRa$(iii)$\LRa$(iv)$\LRa$(v) immediately follow from Theorem 13.17 of \cite{Gab-Pel} 
applied to $E_1=L'_{w^\ast}$, $L_1=E'_\beta$ and $T_1=T^\ast$.\qed
\end{proof}

\begin{remark} \label{rem:Lw-Eb-cont} {\em
The condition on $T$ to be such that $T^\ast:L'_{w^\ast} \to E'_\beta$ is continuous is sufficiently strong. It is satisfied if $E$ is a feral space because $E'_\beta = E'_{w^\ast}$ and hence $T^\ast$ is automatically continuous by Theorem 8.10.5 of \cite{NaB}. Recall that an lcs $E$ is {\em feral} if every bounded subset of $E$ is finite-dimensional.\qed}
\end{remark}

Theorem \ref{t:bounded-to-p-lim} applied to the identity map $T=\Id_{E}:  E \to E$ immediately implies the following characterization of spaces for which every bounded subset is a $(p,\infty)$-limited set. Recall that a locally convex space $E$ is called {\em Grothendieck} or has the {\em Grothendieck property} if the identity map $\Id_{E'}:  E'_{w^\ast} \to \big(E'_\beta\big)_w$ is sequentially continuous.

\begin{corollary} \label{c:Bo=Lim-2}
Let  $p\in[1,\infty]$,  and let $E$ be a locally convex space. Then the following conditions are equivalent:
\begin{enumerate}
\item[{\rm(i)}] every bounded subset of $E$ is a $(p,\infty)$-limited set $\big($i.e., $\Bo(E)=\mathsf{L}_{(p,\infty)}(E)$$\big)$;
\item[{\rm(ii)}] the identity map $\Id_{E'}: E'_{w^\ast}  \to E'_\beta$  is $p$-convergent.
\end{enumerate}
If $E'_{w^\ast}$ is sequentially complete and  $E$ has the Grothendieck property, then {\em(i)}-{\em(ii)} are equivalent to
\begin{enumerate}
\item[{\rm(iii)}] any operator $S:\ell_{p^\ast} \to E'_{w^\ast}$ $($or $S:c_0 \to E'_{w^\ast}$ if $p=1$$)$ is sequentially precompact.
\end{enumerate}
If $1< p<\infty$,  $E'_{w^\ast}$ is sequentially complete and  $E$ has the Grothendieck property, then {\rm(i)-(iii)} are equivalent to the following
\begin{enumerate}
\item[{\rm(iv)}] any operator $S:\ell_{p^\ast} \to E'_{w^\ast}$  is sequentially compact.
\end{enumerate}
If $p=1$, $E'_{w^\ast}$ is a sequentially complete, weakly sequentially $1$-complete, Grothendieck space and $E'_\beta$ is sequentially complete, then {\rm(i)-(iii)} are equivalent to the following
\begin{enumerate}
\item[{\rm(v)}] any operator $S:c_0 \to E'_{w^\ast}$ is sequentially compact.
\end{enumerate}
\end{corollary}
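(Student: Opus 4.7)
The plan is to apply Theorem~\ref{t:bounded-to-p-lim} to the identity operator $T = \Id_E : E \to E$ and the target space $L = E$; the whole corollary will then follow as a direct specialization, with no genuinely new argument required.

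First, observe that with $T = \Id_E$ one has $T^\ast = \Id_{E'} : L'_{w^\ast} = E'_{w^\ast} \to E'_\beta$, and $T(B) = B$ for every $B \in \Bo(E)$. Thus clause (i) of Theorem~\ref{t:bounded-to-p-lim} reads ``every $B \in \Bo(E)$ is $(p,\infty)$-limited in $E$'' and clause (ii) reads ``$\Id_{E'} : E'_{w^\ast} \to E'_\beta$ is $p$-convergent,'' matching (i) and (ii) of the corollary verbatim.

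Next, match the supplementary hypotheses. The condition ``$L'_{w^\ast}$ is sequentially complete'' becomes ``$E'_{w^\ast}$ is sequentially complete,'' and ``$T^\ast : L'_{w^\ast} \to (E'_\beta)_w$ is sequentially continuous'' becomes sequential continuity of $\Id_{E'} : E'_{w^\ast} \to (E'_\beta)_w$, which by definition is precisely the Grothendieck property of $E$. Under these identifications, for any operator $S$ appearing in clauses (iii), (iv), (v) of Theorem~\ref{t:bounded-to-p-lim}, the composition $T^\ast \circ S = \Id_{E'} \circ S = S$, so these three clauses of the theorem specialize directly to clauses (iii), (iv), (v) of the corollary. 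For clause (v) one further needs to note that ``$E'_\beta$ and $L'_{w^\ast}$ are sequentially complete'' and ``$L'_{w^\ast}$ is weakly sequentially $1$-complete'' translate, upon substituting $L = E$, to exactly the hypotheses listed before (v) of the corollary.

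There is no serious obstacle in the argument; the entire proof is a bookkeeping exercise identifying each hypothesis and each conclusion of Theorem~\ref{t:bounded-to-p-lim} with its counterpart in the corollary. The only thing one must be careful about is not to confuse the Grothendieck property (sequential continuity of $\Id_{E'}$ as a map $E'_{w^\ast} \to (E'_\beta)_w$) with the stronger property that $\Id_{E'}$ is continuous from $E'_{w^\ast}$ to $E'_\beta$; here only the former is assumed, and that is exactly what the theorem requires.\qed
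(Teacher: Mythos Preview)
Your proof is correct and follows exactly the same approach as the paper: apply Theorem~\ref{t:bounded-to-p-lim} to the identity operator $T=\Id_E:E\to E$ with $L=E$, so that $T^\ast=\Id_{E'}$ and $T^\ast\circ S=S$. The paper states this in a single sentence, while you spell out the identification of each hypothesis and conclusion, but the argument is identical.
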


Applying Corollary \ref{c:Bo=Lim-2} for $p=\infty$, we obtain the following assertion.
\begin{corollary} \label{c:Bo=Lim-infty}
Let $E$ be a locally convex space. Then the following conditions are equivalent:
\begin{enumerate}
\item[{\rm(i)}] every bounded subset of $E$ is limited $\big($i.e., $\Bo(E)=\mathsf{L}(E)$$\big)$;
\item[{\rm(ii)}] the identity map $\Id_{E'}: E'_{w^\ast}  \to E'_\beta$  is completely continuous {\rm(=} $\infty$-convergent{\rm)}; in particular, $E$ is a Grothendieck space.
\end{enumerate}
\end{corollary}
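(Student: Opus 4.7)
The plan is to obtain Corollary \ref{c:Bo=Lim-infty} as the special case $p=\infty$ of Corollary \ref{c:Bo=Lim-2}, so most of the work has already been done. First I would verify that substituting $p=\infty$ into the equivalence (i)$\Leftrightarrow$(ii) of Corollary \ref{c:Bo=Lim-2} yields precisely the claimed equivalence here. For this, recall two observations: (a) a weakly $\infty$-summable sequence in $E'_{w^\ast}$ is, by the definition in Section \ref{sec:pre}, simply a weak$^\ast$ null sequence in $E'$; and (b) convergence to zero in $E'_\beta$ means uniform convergence to zero on every bounded subset of $E$. Consequently, $\Id_{E'}:E'_{w^\ast}\to E'_\beta$ being $\infty$-convergent (= completely continuous, in the sense of Definition \ref{def:qp-summable}) says exactly that $\|\chi_n\|_B\to 0$ for every weak$^\ast$ null sequence $\{\chi_n\}_{n\in\w}\subseteq E'$ and every $B\in\Bo(E)$, which by Definition \ref{def:limited-lcs} is the assertion $\Bo(E)=\mathsf{L}(E)$.

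Next I would dispatch the supplementary clause that (ii) forces $E$ to be a Grothendieck space by a direct topology comparison. Assume $\Id_{E'}:E'_{w^\ast}\to E'_\beta$ is $\infty$-convergent. Given a weak$^\ast$ null sequence $\{\chi_n\}_{n\in\w}\subseteq E'$, the hypothesis yields $\chi_n\to 0$ in the strong topology $\beta(E',E)$. Since the associated weak topology on $E'_\beta$ is coarser than $\beta(E',E)$, we also have $\chi_n\to 0$ in $(E'_\beta)_w$. Hence $\Id_{E'}:E'_{w^\ast}\to (E'_\beta)_w$ is sequentially continuous, which is precisely the definition of the Grothendieck property for $E$ recalled just before the corollary.

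Because the whole argument reduces to specializing an already-established corollary and noting a one-line comparison between the strong and weak topologies on $E'$, I do not anticipate any genuine technical obstacle. The only subtle point to keep in mind is the terminological convention: $\infty$-convergent in Definition \ref{def:qp-summable} means $(\infty,\infty)$-convergent, i.e., it sends weakly null sequences into strongly null sequences, and this coincides with the classical notion of a completely continuous map; once this dictionary is in place, the proof is just the two short observations above.
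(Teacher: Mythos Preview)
Your proposal is correct and follows exactly the paper's approach: the corollary is stated immediately after the phrase ``Applying Corollary \ref{c:Bo=Lim-2} for $p=\infty$, we obtain the following assertion,'' with no separate proof given. Your additional verification of the ``in particular'' Grothendieck clause via the trivial inclusion $\sigma(E',E'')\subseteq\beta(E',E)$ is entirely in line with what the paper leaves implicit.
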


Below we give a useful construction of operators from $\ell_1^0$ into locally convex spaces whose adjoint is $p$-convergent.

\begin{proposition} \label{p:seq-p-convergent}
Let $\{x_n\}_{n\in\w}$ be a bounded sequence in a locally convex space $(E,\tau)$, 
and let $T:\ell_1^0 \to E$ be a linear map defined by
\[
T(a_0 e_0+\cdots+a_ne_n):=a_0 x_0+\cdots+ a_n x_n \quad (n\in\w, \; a_0,\dots,a_n\in\IF).
\]
Then $T$ is continuous. Moreover, if $E$ is locally complete, then $T$ can be extended to a continuous operator from $\ell_1$ to $E$. In any case,  if $\{x_n\}_{n\in\w}$ is a $(p,\infty)$-limited set, then $T^\ast: E'_{w^\ast}\to\ell_\infty$ is $p$-convergent.
\end{proposition}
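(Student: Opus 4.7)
My plan is to handle the three assertions in sequence, treating continuity on $\ell_1^0$, extension to $\ell_1$, and $p$-convergence of $T^\ast$ separately.

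For continuity of $T:\ell_1^0\to E$, I will use the Minkowski functional argument: given any $U\in\Nn_0^c(E)$, the boundedness of $\{x_n\}_{n\in\w}$ yields some $M>0$ with $x_n\in MU$ for every $n\in\w$. For $\alpha=(a_0,\dots,a_n,0,0,\dots)\in\ell_1^0$ I can then write
\[
T(\alpha)=\sum_{i=0}^n a_i x_i = M\sum_{i=0}^n a_i (x_i/M),
\]
and since $x_i/M\in U$ and $U$ is absolutely convex, the relation $T(\alpha)\in M\|\alpha\|_1\, U$ follows immediately. This shows $T$ is continuous (in fact that $p_U\circ T\leq M\|\cdot\|_1$), which in turn gives $T^{-1}(U)\supseteq\{\alpha:\|\alpha\|_1<1/M\}$.

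For the extension to $\ell_1$ when $E$ is locally complete, I will set $B=\overline{\acx}(\{x_n\})$, which is bounded and absolutely convex. For $\alpha=(a_n)\in\ell_1$, the partial sums $s_N=\sum_{n=0}^N a_n x_n$ satisfy $s_N-s_M=\sum_{n=M+1}^N a_n x_n\in\bigl(\sum_{n=M+1}^N|a_n|\bigr)B$, so $(s_N)$ is Mackey--Cauchy with respect to $B$. Local completeness (which is equivalent to the convergence of Mackey--Cauchy sequences, as follows from the paper's definition via null sequences) then guarantees a limit $s\in E$; I will define $T(\alpha):=s$, verify linearity is inherited from finite sums, and note that the extension is still continuous because the bound $p_U(T(\alpha))\leq M\|\alpha\|_1$ passes to the limit by lower semicontinuity of $p_U$. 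The main obstacle here is to justify convergence cleanly from the paper's null-sequence definition of local completeness rather than from sequential completeness; I expect to handle this by recording that $(a_n x_n)$ is a Mackey-null sequence and invoking the standard Mackey--Cauchy formulation.

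For the last assertion, assume $\{x_n\}_{n\in\w}$ is $(p,\infty)$-limited. I will compute $T^\ast$ explicitly: for $\chi\in E'$ and the canonical basis $\{e_n\}$ we have $T^\ast(\chi)(e_n)=\langle\chi,T(e_n)\rangle=\langle\chi,x_n\rangle$, so $T^\ast(\chi)=(\langle\chi,x_n\rangle)_{n\in\w}\in\ell_\infty$ with
\[
\|T^\ast(\chi)\|_{\ell_\infty}=\sup_{n\in\w}|\langle\chi,x_n\rangle|=\|\chi\|_{\{x_n\}}.
\]
Now let $\{\chi_k\}_{k\in\w}$ be a weakly $p$-summable sequence in $E'_{w^\ast}$, i.e., a weak$^\ast$ $p$-summable sequence in $E'$. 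By the definition of a $(p,\infty)$-limited set applied to $A=\{x_n\}_{n\in\w}$ (using (v) of Lemma \ref{l:limited-set-1} for the countable subset), $\|\chi_k\|_{A}\to 0$, hence $\|T^\ast(\chi_k)\|_{\ell_\infty}\to 0$. This says precisely that $T^\ast$ sends weakly $p$-summable sequences to norm-null sequences in $\ell_\infty$, i.e., $T^\ast$ is $p$-convergent, completing the proof.
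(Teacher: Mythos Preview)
Your proof is correct. The paper defers the continuity and extension claims to an external reference (Proposition~14.9 of \cite{Gab-Pel}), whereas you supply direct, self-contained arguments: the Minkowski-functional bound $p_U(T\alpha)\leq M\|\alpha\|_1$ for continuity, and the Mackey--Cauchy formulation of local completeness for the extension. Your treatment of the $p$-convergence of $T^\ast$---computing $T^\ast(\chi)=(\langle\chi,x_n\rangle)_n$ and reading off $\|T^\ast(\chi_k)\|_{\ell_\infty}=\|\chi_k\|_{\{x_n\}}\to 0$ from the $(p,\infty)$-limitedness---is essentially identical to the paper's. One minor remark: your parenthetical appeal to (v) of Lemma~\ref{l:limited-set-1} is unnecessary, since the entire sequence $\{x_n\}$ is already assumed to be a $(p,\infty)$-limited set, so the definition applies directly without passing to a countable subset.
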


\begin{proof}
The continuity of $T$ and, in the case $E$ is locally complete, the existence of the extension of $T$ are proved in Proposition 14.9 of \cite{Gab-Pel}. Assume now that $A=\{x_n\}_{n\in\w}$ is a $(p,\infty)$-limited set.
To show that the adjoint linear map $T^\ast$ is $p$-convergent, let $\{\chi_n\}_{n\in\w}$ be a  weak$^\ast$ $p$-summable sequence in $E'$. Since $A$ is a $(p,\infty)$-limited set, by definition, we have $\lim_{n\to\infty} \sup_{x\in A} |\langle\chi_n,x\rangle|=0$. Therefore
\[
\| T^\ast(\chi_n)\|_{\ell_\infty}= \sup_{k\in\w} \big|\langle T^\ast(\chi_n),e_k\rangle\big|= \sup_{k\in\w} \big|\langle \chi_n,x_k\rangle\big| \leq \sup_{x\in A} \big|\langle \chi_n,x\rangle\big| \to 0\;\; \mbox{ as } \; n\to\infty.
\]
Thus $\big\{T^\ast(\chi_n)\big\}_{n\in\w}$ is a null sequence in $\ell_\infty$, and hence $T^\ast$ is $p$-convergent.\qed
\end{proof}

We know  that $(p,q)$-limited sets are bounded. In the next  theorem we give an operator characterization of those spaces $E$ in which the  $(p,q)$-limited sets have stronger topological properties than just being bounded as, for example, being weakly sequentially (pre)compact. 

\begin{theorem} \label{t:qp*-convergent-Limited}
Let $1\leq p\leq q\leq \infty$, and let  $E$ be a locally convex space. Then the following assertions are equivalent:
\begin{enumerate}
\item[{\rm(i)}] if $L$ is a normed space and $T:L\to E$ is an operator such that $T^\ast: E'_{w^\ast} \to L'_\beta$ is $(q,p)$-convergent, then $T$ is weakly sequentially compact {\rm(}resp., sequentially compact, weakly sequentially precompact, sequentially precompact, weakly sequentially $p$-compact or  weakly sequentially $p$-precompact{\rm)};
\item[{\rm(ii)}] the same as {\rm(i)} with $L=\ell_1^0$;
\item[{\rm(iii)}] each $(p,q)$-limited subset of $E$ is relatively weakly sequentially compact {\rm(}resp.,  relatively sequentially compact, weakly sequentially precompact, sequentially precompact,  relatively  weakly sequentially $p$-compact or  weakly sequentially $p$-precompact{\rm)}.
\end{enumerate}
Moreover, if $E$ is locally complete, then {\rm(i)-(iii)} are equivalent to
\begin{enumerate}
\item[{\rm(iv)}] the same as {\rm(i)} with $L=\ell_1$.
\end{enumerate}
\end{theorem}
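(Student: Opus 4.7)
The plan is to establish the cycle (i)$\Rightarrow$(ii)$\Rightarrow$(iii)$\Rightarrow$(i), together with (i)$\Leftrightarrow$(iv) under the local completeness assumption. The implications (i)$\Rightarrow$(ii) and (i)$\Rightarrow$(iv) are immediate, since $\ell_1^0$ is normed and $\ell_1$ is Banach. The core engine for the nontrivial implications is Theorem \ref{t:qp-convergent-pq-limited-1}: for any operator $T:L\to E$ with $L$ normed, $T(B_L)$ is a $(p,q)$-limited subset of $E$ if and only if $T^\ast:E'_{w^\ast}\to L'_\beta$ is $(q,p)$-convergent. The argument then proceeds uniformly for each of the six listed properties, because each is formulated via extraction of subsequences with a prescribed type of (weak) convergence or Cauchyness in $E$, and is therefore inherited by subsets.

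For (iii)$\Rightarrow$(i): given $T:L\to E$ from a normed space $L$ such that $T^\ast$ is $(q,p)$-convergent, Theorem \ref{t:qp-convergent-pq-limited-1} yields that $T(B_L)$ is $(p,q)$-limited, and (iii) then delivers the topological property of $T(B_L)$ that is precisely the operator version demanded in (i).

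For the key implication (ii)$\Rightarrow$(iii): take $A\in \mathsf{L}_{(p,q)}(E)$ and an arbitrary sequence $\{a_n\}_{n\in\w}\subseteq A$. Since $A$ is bounded by (ii) of Lemma \ref{l:limited-set-1}, Proposition \ref{p:seq-p-convergent} provides a continuous operator $T:\ell_1^0\to E$ with $T(e_n)=a_n$. A direct computation gives $T(B_{\ell_1^0})=\acx(\{a_n:n\in\w\}\cup\{0\})\subseteq \cacx(A)$, and the right-hand side is $(p,q)$-limited by (iii) of Lemma \ref{l:limited-set-1}; hence so is $T(B_{\ell_1^0})$ by the subset-closure of $\mathsf{L}_{(p,q)}(E)$. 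Theorem \ref{t:qp-convergent-pq-limited-1} then guarantees that $T^\ast$ is $(q,p)$-convergent, and applying (ii) to $T$ produces a subsequence of $\{a_n\}$ inside $T(B_{\ell_1^0})\subseteq E$ with the required convergence behaviour. This is exactly the relative version of the property for $A$.

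Finally, (iv)$\Rightarrow$(ii) when $E$ is locally complete: given $T:\ell_1^0\to E$ whose adjoint is $(q,p)$-convergent, Proposition \ref{p:seq-p-convergent} extends $T$ to an operator $\tilde T:\ell_1\to E$. Under the canonical identifications $(\ell_1^0)'_\beta\cong \ell_\infty\cong \ell_1'_\beta$ the adjoints $T^\ast$ and $\tilde T^\ast$ coincide (both send $\chi\in E'$ to the bounded sequence $(\chi(a_n))_{n\in\w}$), so $\tilde T^\ast$ is also $(q,p)$-convergent. By (iv), $\tilde T(B_{\ell_1})$ enjoys the property $\mathcal{P}$ in question, and since $T(B_{\ell_1^0})\subseteq \tilde T(B_{\ell_1})$ and each of the six properties is inherited by subsets, $T(B_{\ell_1^0})$ does too. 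The main obstacle throughout is really bookkeeping: one must verify, uniformly for all six variants of $\mathcal{P}$, both that it passes to subsets (needed in (iv)$\Rightarrow$(ii) and for extracting the \emph{relative} property in (ii)$\Rightarrow$(iii)) and that the canonical adjoint identification between $T^\ast$ and $\tilde T^\ast$ preserves $(q,p)$-convergence; this is routine but must be checked case by case.
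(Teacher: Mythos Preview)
Your proof is correct and follows essentially the same route as the paper's. The only cosmetic differences are that in (ii)$\Rightarrow$(iii) you invoke Theorem \ref{t:qp-convergent-pq-limited-1} (via the inclusion $T(B_{\ell_1^0})\subseteq\cacx(A)$) to conclude that $T^\ast$ is $(q,p)$-convergent, whereas the paper verifies this by the direct computation $\|T^\ast(\chi)\|_{\ell_\infty}=\sup_{n}|\langle\chi,x_n\rangle|$ together with the fact that the sequence $\{x_n\}$ itself is $(p,q)$-limited; and that you close the locally complete cycle via (iv)$\Rightarrow$(ii) rather than the paper's (iv)$\Rightarrow$(iii), which amounts to the same argument.
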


\begin{proof}
(i)$\Ra$(ii) and (i)$\Ra$(iv) are clear.

(ii)$\Ra$(iii) and (iv)$\Ra$(iii): Let $A$ be a $(p,q)$-limited subset of $E$. Fix an arbitrary sequence $S=\{x_n\}_{n\in\w}$ in $A$, so $S$ is a bounded subset of $E$. Therefore, by Proposition \ref{p:seq-p-convergent}, the linear map $T:\ell_1^0 \to E$ (or $T:\ell_1 \to E$ if $E$ is locally complete) defined by
\[
T(a_0 e_0+\cdots+a_ne_n):=a_0 x_0+\cdots+ a_n x_n \quad (n\in\w, \; a_0,\dots,a_n\in\IF).
\]
is continuous. For every $n\in\w$ and each $\chi\in E'$, we have
$
\langle T^\ast(\chi),e_n\rangle=\langle \chi,T(e_n)\rangle=\langle\chi,x_n\rangle
$
and hence $T^\ast(\chi)=\big(\langle\chi,x_n\rangle\big)_n\in\ell_\infty$. In particular, $\|T^\ast(\chi)\|_{\ell_\infty}=\sup_{n\in\w} |\langle\chi,x_n\rangle|$.

Let now $\{\chi_n\}_{n\in\w}$ be a weak$^\ast$ $p$-summable sequence in $E'_{w^\ast}$. Since $A$ and hence also $S$ are $(p,q)$-limited sets we obtain $\big(\|T^\ast(\chi_n)\|_{\ell_\infty}\big)=\big(\sup_{i\in\w} |\langle\chi_n,x_i\rangle|\big)\in\ell_q$ (or $\in c_0$ if $q=\infty$). Therefore $T^\ast$ is $(q,p)$-convergent, and hence, by (ii) or (iv), the operator $T$ belongs to the corresponding class described in (i). Therefore $S=\{T(e_n)\}_{n\in\w}$ has a weakly convergent (resp., convergent, weakly Cauchy, Cauchy, weakly $p$-convergent, or weakly $p$-Cauchy) subsequence, as desired.
\smallskip

(iii)$\Ra$(i)  Let $T:L\to E$ be an operator from a normed space such that $T^\ast: E'_{w^\ast} \to L'_\beta$ is a $(q,p)$-convergent operator. Then, by Theorem \ref{t:qp-convergent-pq-limited-1}, $T(B_L)$ is a $(p,q)$-limited set and hence it is relatively weakly sequentially compact (resp.,  relatively sequentially compact, weakly sequentially precompact, sequentially precompact,  relatively  weakly sequentially $p$-compact or  weakly sequentially $p$-precompact). Thus $T$ belongs to the corresponding class described in (i).\qed
\end{proof}

The definition of coarse $p$-limited sets allows to reformulate Theorem 14.16 of \cite{Gab-Pel} as follows.

\begin{theorem}  \label{t:p-V*-set}
Let $1<p<\infty$, and let $E$ be a quasibarrelled space such that $E'_\beta$ is an $\ell_\infty$-$V_p$-barrelled space. Then the class of $p$-$(V^\ast)$ sets in $E$ coincides with the class of coarse $p$-limited sets.
\end{theorem}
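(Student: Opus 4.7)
The plan is to establish both inclusions, essentially reading off the content of Theorem 14.16 of \cite{Gab-Pel} through the lens of Definition \ref{def:p-limit-coarse-set}(ii), which says that $A$ is coarse $p$-limited precisely when $T(A)$ is relatively compact in $\ell_p$ for every $T\in\LL(E,\ell_p)$.

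For the inclusion that every coarse $p$-limited subset of $E$ is $p$-$(V^\ast)$, I would start with a coarse $p$-limited set $A$ and an arbitrary weakly $p$-summable sequence $\{\chi_n\}_{n\in\w}$ in $E'_\beta$. Since $E$ is quasibarrelled, $\{\chi_n\}_{n\in\w}$ is equicontinuous, and then the analog for $\ell_p$ of the construction used in Proposition 4.19 of \cite{Gab-Pel} produces an operator $T:E\to\ell_p$ satisfying $T(x)=(\langle\chi_n,x\rangle)_{n\in\w}$; equivalently, $\chi_n=T^\ast(e_n^\ast)$ for every $n\in\w$. By hypothesis, $T(A)$ is relatively compact in $\ell_p$, so Proposition \ref{p:compact-ell-p}(i) yields
\[
\lim_{m\to\infty}\sup_{a\in A}\sum_{n=m}^{\infty}|\langle\chi_n,a\rangle|^p=0,
\]
which in particular forces $\sup_{a\in A}|\langle\chi_n,a\rangle|\to 0$. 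Hence $A$ is $p$-$(V^\ast)$.

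For the reverse inclusion, let $A$ be $p$-$(V^\ast)$ and fix $T\in\LL(E,\ell_p)$. By (iii) of Proposition 4.17 of \cite{Gab-Pel}, the sequence $\{T^\ast(e_n^\ast)\}_{n\in\w}$ is weakly $p$-summable in $E'_\beta$ (this is where $1<p<\infty$ is used). The $p$-$(V^\ast)$ hypothesis gives only the pointwise decay $\sup_{a\in A}|\langle T^\ast(e_n^\ast),a\rangle|\to 0$, whereas Proposition \ref{p:compact-ell-p}(i) requires the stronger uniform tail condition $\lim_{m\to\infty}\sup_{a\in A}\sum_{n\geq m}|\langle T^\ast(e_n^\ast),a\rangle|^p=0$ for the desired relative compactness of $T(A)$. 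Bridging this gap is where the assumption that $E'_\beta$ is $\ell_\infty$-$V_p$-barrelled is decisive: it is precisely tailored to upgrade the $c_0$-type decay coming from the definition of $p$-$(V^\ast)$ sets to the $\ell_p$-tail decay required here. This is the substantive content of Theorem 14.16 of \cite{Gab-Pel}, which I would invoke directly to conclude that $T(A)$ is relatively compact in $\ell_p$, so that $A$ is coarse $p$-limited.

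The main obstacle is this second inclusion; once the $\ell_\infty$-$V_p$-barrelled condition is brought to bear via Theorem 14.16 of \cite{Gab-Pel}, the two directions are matched by the operator-theoretic definition of coarse $p$-limitedness and by Proposition \ref{p:compact-ell-p}. In effect the present theorem is a straightforward re-reading of Theorem 14.16 of \cite{Gab-Pel} in the new terminology, with no additional work beyond recognizing the characterization there as the defining property of the class $\mathsf{CL}_p(E)$.
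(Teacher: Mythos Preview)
Your proposal is correct and matches the paper's approach exactly: the paper gives no independent argument but simply notes that the definition of coarse $p$-limited sets allows Theorem 14.16 of \cite{Gab-Pel} to be read as the present statement, which is precisely your concluding observation. The extra unpacking you provide of the two inclusions goes beyond what the paper writes, but is consistent with that reformulation and correctly identifies where each hypothesis enters.
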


In Theorem \ref{t:p-V*-set} the condition on $E$ being a quasibarrelled space is essential as Example \ref{exa:coarse-p-lim-not-pV*} below shows. First we prove the next simple lemma.
\begin{lemma} \label{l:weak-to-norm}
Let $E$ be a locally convex space such that $E=E_w$, and let $L$ be a normed space. Then every $T\in\LL(E,L)$ is finite-dimensional.
\end{lemma}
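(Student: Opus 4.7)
The plan is to exploit the fact that $0$-neighborhoods in a space carrying the weak topology contain linear subspaces of finite codimension, and that a normed space contains no nontrivial bounded subspaces.

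First I would use continuity of $T$: since $L$ is normed, the closed unit ball $B_L$ is a $0$-neighborhood in $L$, hence $U:=T^{-1}(B_L)$ is a $0$-neighborhood in $E$. Next, because $E=E_w$, the weak topology coincides with the original topology on $E$, so $U$ contains a basic weak neighborhood of the form
\[
V=\{x\in E:|\langle\chi_i,x\rangle|<\e,\ i=1,\dots,n\}
\]
for some $\chi_1,\dots,\chi_n\in E'$ and $\e>0$. Setting $F:=\bigcap_{i=1}^n\ker\chi_i$, we obtain a linear subspace $F\subseteq V\subseteq U$ with $\dim(E/F)\leq n$.

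Since $F$ is a vector subspace and $T$ is linear, $T(F)$ is a vector subspace of $L$ contained in $B_L$. But the only vector subspace of a normed space contained in its unit ball is $\{0\}$, so $T(F)=\{0\}$, i.e., $F\subseteq\ker T$. Consequently $\dim(E/\ker T)\leq\dim(E/F)\leq n$, and therefore the range $T(E)\cong E/\ker T$ is finite-dimensional.

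There is no real obstacle here; the argument is a standard one-paragraph verification once the two ingredients (basic weak neighborhoods are unions of translates of finite-codimensional subspaces, and normed spaces have no nontrivial bounded subspaces) are isolated. The only thing to be careful about is writing the weak neighborhood correctly so that $F=\bigcap\ker\chi_i$ genuinely sits inside $U$, which is automatic from the definition of $V$.
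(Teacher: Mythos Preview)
Your proof is correct and uses essentially the same idea as the paper: a weak $0$-neighborhood contains a finite-codimensional linear subspace, and the image of such a subspace lands in the unit ball of $L$, forcing it to be zero. The paper routes the argument through the completion $\bar E=\IF^\kappa$ and picks out finitely many coordinates there, whereas you work directly in $E$ with a basic weak neighborhood; your version is slightly more direct but the substance is identical.
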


\begin{proof}
Observe that $T$ can be extended to an operator ${\bar T}$ from a completion ${\bar E}$ of $E$ to a completion ${\bar L}$ of $L$. As $E$ carries its weak topology, we obtain ${\bar E}=\IF^\kappa$ for some cardinal $\kappa$. Since ${\bar T}$ is continuous, there is a finite subset $\lambda$ of $\kappa$ such that ${\bar T}\big(\{0\}^\lambda \times \IF^{\kappa\SM \lambda} \big)$ is contained in the unit ball $B_{{\bar L}}$ of ${\bar L}$. Taking into account that $B_{{\bar L}}$ contains no non-trivial linear subspaces we obtain that $\{0\}^\lambda \times \IF^{\kappa\SM \lambda}$ is contained in the kernel $\ker({\bar T})$ of ${\bar T}$. Therefore ${\bar T}[\IF^\kappa]={\bar T}[\IF^\lambda]$ is finite-dimensional. Thus also $T$ is finite-dimensional.\qed
\end{proof}

\begin{example} \label{exa:coarse-p-lim-not-pV*}
Let $1< q \leq p<\infty$. Then the space $E:=(\ell_q)_w$ satisfies the following conditions:
\begin{enumerate}
\item[{\rm(i)}] $E$ is not quasibarrelled, but $E'_\beta=\ell_{q^\ast}$ is a Banach space;
\item[{\rm(ii)}] $B_{\ell_q}$ is a coarse $p$-limited set in $E$ which is not a $p$-$(V^\ast)$ set.
\end{enumerate}
\end{example}

\begin{proof}
The clause (i) is clear, and Lemma \ref{l:weak-to-norm} shows that $B_{\ell_q}$ is a coarse $p$-limited set in $E$. To show that $B_{\ell_q}$  is not a $p$-$(V^\ast)$ set, let $\chi_n=e^\ast_n$ for every $n\in\w$. Then the sequence $\{\chi_n\}_{n\in\w} \subseteq E'_\beta=\ell_{q^\ast}$ is weakly $p$-summable (indeed, if $(x_k)\in \ell_q=\big(\ell_{q^\ast}\big)'$, then $\big(\langle(x_k),\chi_n\rangle\big)_n=(x_n)\in \ell_q\subseteq \ell_p$). However, $\sup_{(x_k)\in B_{\ell_q}} |\langle\chi_n,(x_k)\rangle|=1\not\to 0$. Thus $B_{\ell_q}$  is not a $p$-$(V^\ast)$ set.\qed
\end{proof}

Below for an important case which includes all strict $(LF)$-spaces, we characterize coarse $1$-limited sets. First we recall some definitions and results.
Following \cite{Gabr-free-resp}, a sequence $A=\{ a_n\}_{n\in\w}$ in an lcs $E$ is said to be {\em equivalent to the standard unit basis $\{ e_n: n\in\w\}$ of $\ell_1$} if there exists a linear topological isomorphism $R$ from $\cspn(A)$ onto a subspace of $\ell_1$ such that $R(a_n)=e_n$ for every $n\in\w$ (we do not assume that the closure $\cspn(A)$ of the  $\spn(A)$ of $A$ is complete or that $R$ is onto). We shall say also that $A$ is an {\em $\ell_1$-sequence}.
Following \cite{GKKLP}, a locally convex space $E$ is said to have the {\em Rosenthal property} if every bounded sequence in $E$ has a subsequence which either (1) is Cauchy in the weak topology, or (2) is equivalent to the unit basis of $\ell_1$. The following remarkable extension of the celebrated Rosenthal $\ell_1$-theorem  was proved by Ruess \cite{ruess}.

\begin{theorem}\label{ruess}
Every locally complete locally convex space $E$ whose every separable bounded set is metrizable has the Rosenthal property.
\end{theorem}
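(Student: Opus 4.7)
The plan is to reduce to the classical Rosenthal $\ell_1$-theorem for Banach spaces by exploiting local completeness to manufacture an auxiliary Banach space from the given bounded sequence, and then to argue that the dichotomy obtained there lifts back to $E$. Let $\{x_n\}_{n\in\w}$ be a bounded sequence in $E$ and set $B:=\overline{\acx}\{x_n:n\in\w\}$. A standard characterization of local completeness asserts that every closed bounded absolutely convex subset of $E$ is completant, so the normed space $(E_B,\|\cdot\|_B)$, whose norm is the Minkowski functional of $B$, is actually a Banach space, and $\{x_n\}$ lies in its closed unit ball. Applying the classical Rosenthal $\ell_1$-theorem inside $E_B$, one extracts a subsequence $\{x_{n_k}\}_{k\in\w}$ that is either $\sigma(E_B,(E_B)^\ast)$-Cauchy, or equivalent in the Banach space sense to the unit vector basis of $\ell_1$ inside $E_B$.

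For the first alternative, every $\chi\in E'$ is bounded on the bounded set $B$, so its restriction to $E_B$ is continuous with respect to $\|\cdot\|_B$; thus $E'|_{E_B}\subseteq (E_B)^\ast$, and any $\sigma(E_B,(E_B)^\ast)$-Cauchy sequence is automatically $\sigma(E,E')$-Cauchy. This yields the required weakly Cauchy subsequence in $E$ with no recourse to the metrizability hypothesis.

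For the second alternative, the task is to produce a linear topological isomorphism $R$ from $\cspn\{x_{n_k}\}$ (closure taken in $E$) onto a subspace of $\ell_1$ satisfying $R(x_{n_k})=e_k$. On the algebraic span I would define $R\bigl(\sum a_i x_{n_{k_i}}\bigr):=(a_i)$. Continuity of $R^{-1}$ in the $E$-topology is immediate: if $(a_i^{(j)})\to 0$ in $\ell_1$, then $\|\sum a_i^{(j)}x_{n_{k_i}}\|_B\leq\sum|a_i^{(j)}|\to 0$, and since the $E$-topology is weaker than the $E_B$-topology, the partial sums converge to $0$ in $E$ as well; since $E_B$ is complete, $R^{-1}$ extends as a continuous linear map $\ell_1\to E_B\hookrightarrow E$.

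The main obstacle is the continuity of $R$ itself in the $E$-topology, and the consequent extension of $R$ to the $E$-closure of the span. The Banach space $\ell_1$-equivalence supplies the lower estimate $\sum|a_i|\leq (1/c)\|\sum a_i x_{n_{k_i}}\|_B$ only in the strong norm $\|\cdot\|_B$, not in any $E$-continuous seminorm, and the biorthogonal functionals produced by the classical argument a priori live in $(E_B)^\ast$ rather than in $E'$. This is precisely where the assumption that separable bounded subsets of $E$ be metrizable becomes decisive: the closed span of $\{x_{n_k}\}$ is separable, its bounded pieces are metrizable, so continuity of $R$ on $\spn\{x_{n_k}\}$ can be tested sequentially. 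Combined with local completeness, a diagonal or sliding-hump extraction should allow one either to pass to a further subsequence for which genuine biorthogonal functionals in $E'$ are available, or equivalently to establish that on the closed span of the subsequence the $E$-topology and the $E_B$-topology agree to the extent required for $\ell_1$-equivalence in the paper's sense. Executing this transfer step faithfully is the technical heart of Ruess's theorem and is the part of the argument I would expect to demand the greatest care.
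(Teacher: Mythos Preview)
The paper does not supply its own proof of this statement; Theorem~\ref{ruess} is quoted as a result of Ruess with the citation \cite{ruess} and is then used as a black box (notably in Theorem~\ref{t:image-l1} and Theorem~\ref{t:coarse-p-lim-wsc}). So there is no in-paper argument to compare against.

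As for your proposal itself: the overall architecture is the right one and is indeed how Ruess proceeds---pass to the Banach disc $E_B$ generated by the sequence, invoke the classical Rosenthal theorem there, and lift the dichotomy back to $E$. Your treatment of the weakly Cauchy branch is complete and correct. However, you explicitly stop short of carrying out the $\ell_1$-branch, and this is a genuine gap rather than a routine verification. The issue you flag is exactly the difficulty: equivalence to the $\ell_1$-basis in $(E_B,\|\cdot\|_B)$ gives you coefficient functionals only in $(E_B)^\ast$, and the definition of ``$\ell_1$-sequence'' used in the paper demands a topological isomorphism from the \emph{$E$-closure} of the span into $\ell_1$, which requires control in the $E$-topology. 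The metrizability hypothesis is what bridges this, but a vague appeal to ``sequential testing plus sliding hump'' is not yet a proof; one needs, for instance, to show that after a further extraction the $E$-topology and the $\|\cdot\|_B$-topology agree on $\cspn\{x_{n_k}\}$, or to produce the biorthogonal functionals directly in $E'$. Without that step executed, the argument is a correct outline but not a proof.
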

Note that for every $\ell_1$-sequence $A=\{ a_n\}_{n\in\w}$ in $E$  in Theorem \ref{ruess}, a topological isomorphism $R$ from $\overline{\spn}(A)$ to  $\ell_1$ is {\em onto}. Observe also that strict $(LF)$-spaces satisfy Theorem \ref{ruess}.

\begin{theorem} \label{t:image-l1}
Let $E$ be  a locally complete space whose separable bounded sets are metrizable. Then for a bounded subset $A$ of $E$, the following assertions are equivalent:
\begin{enumerate}
\item[{\rm(i)}] $A$ is a coarse $1$-limited set; 
\item[{\rm(ii)}] $A$ does not contain an $\ell_1$-sequence $\{x_n\}_{n\in\w}$  such that the closed span $\overline{\spn}\{x_n\}_{n\in\w}$ is complemented in $E$.
\end{enumerate}
If in addition $E$ is barrelled, then {\rm(i)} and {\rm(ii)} are equivalent to
\begin{enumerate}
\item[{\rm(iii)}] $A$ is a $(1,\infty)$-limited set.
\end{enumerate}
\end{theorem}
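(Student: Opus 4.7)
The plan is to prove (i)$\Leftrightarrow$(ii) by contraposition in both directions, and to deduce (i)$\Leftrightarrow$(iii) in the barrelled case directly from Proposition~\ref{p:char-1-limited}, whose clauses (i) and (ii) are exactly $(1,\infty)$-limitedness and coarse $1$-limitedness. The substantive work is thus the equivalence (i)$\Leftrightarrow$(ii), resting on two ingredients: Ruess's theorem (Theorem~\ref{ruess}) applied in $E$, together with the remark that the canonical map $R:\overline{\spn}\{x_n\}\to\ell_1$ is surjective for every $\ell_1$-sequence $\{x_n\}$ in $E$; and the classical fact that $\ell_1$ enjoys the Schur property and that block basic sequences of its unit basis have complemented closed span in $\ell_1$.

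For (i)$\Rightarrow$(ii) I argue contrapositively. Given an $\ell_1$-sequence $\{x_n\}\subseteq A$ whose closed span $Y:=\overline{\spn}\{x_n\}$ is complemented in $E$ by a projection $\Pi:E\to Y$, Ruess's theorem supplies a topological isomorphism $R:Y\to\ell_1$ with $R(x_n)=e_n$. The operator $T:=R\circ\Pi\in\LL(E,\ell_1)$ then sends $A$ to a set containing the non-relatively-compact sequence $\{e_n\}$, which witnesses that $A$ is not coarse $1$-limited.

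For (ii)$\Rightarrow$(i), also contrapositively, suppose $T\in\LL(E,\ell_1)$ makes $T(A)$ non-relatively-compact and pick $\{a_n\}\subseteq A$ with $\{T(a_n)\}$ having no norm-convergent subsequence. Applying Ruess's theorem to $\{a_n\}$ in $E$ and passing to a subsequence $\{x_n\}$, we get that $\{x_n\}$ is either weakly Cauchy or an $\ell_1$-sequence in $E$. The weakly Cauchy case is excluded, since then $\{T(x_n)\}$ would be weakly Cauchy in $\ell_1$, hence norm Cauchy by the Schur property, hence norm convergent by completeness, contradicting the choice of $\{a_n\}$. Therefore $\{x_n\}$ is an $\ell_1$-sequence in $E$. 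Applying Rosenthal's theorem once more inside $\ell_1$ to $\{T(x_n)\}$ (again Schur kills the weakly Cauchy option), after a further subsequence $\{T(x_n)\}$ is equivalent to the standard unit basis of $\ell_1$. Now invoke the Bessaga--Pe{\l}czy\'{n}ski selection principle: thinning the indices once more, $\{T(x_n)\}$ becomes a small norm perturbation of a block basic sequence of the standard $\ell_1$-basis, whose closed span is complemented in $\ell_1$; hence $Z:=\overline{\spn}\{T(x_n)\}$ is itself complemented in $\ell_1$ via some $Q:\ell_1\to Z$. Since $T$ restricts to an isomorphism $T|_Y:Y\to Z$ (both spaces are canonically isomorphic to $\ell_1$ with $x_n$ and $T(x_n)$ corresponding to $e_n$), the composition $P:=(T|_Y)^{-1}\circ Q\circ T$ is a continuous projection of $E$ onto $Y=\overline{\spn}\{x_n\}$. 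As $\{x_n\}\subseteq A$, this contradicts (ii).

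The main obstacle is this final complementation step. The subspace $\overline{\spn}\{x_n\}\subseteq E$ is abstractly isomorphic to $\ell_1$, but producing a projection from $E$ onto it is not automatic; the argument sidesteps this by transferring the complementation question across $T$ into $\ell_1$, where Bessaga--Pe{\l}czy\'{n}ski together with the complementation of block bases of the standard $\ell_1$-basis finishes the job, after which the projection is pulled back via $(T|_Y)^{-1}$. The Schur property of $\ell_1$ is used twice, once to rule out the weakly Cauchy alternative in $E$ and once inside $\ell_1$ itself.
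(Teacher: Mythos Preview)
Your overall strategy is the same as the paper's: contraposition in both directions for (i)$\Leftrightarrow$(ii), with Ruess's theorem in $E$ and a transfer of the complementation problem across $T$ into $\ell_1$; and Proposition~\ref{p:char-1-limited} for (i)$\Leftrightarrow$(iii). The paper orders the (ii)$\Rightarrow$(i) argument slightly differently---it first extracts the $\ell_1$-equivalent sequence with complemented span inside $\ell_1$ and only then applies Ruess in $E$---but this reordering is immaterial.

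There is, however, one genuine gap in your Bessaga--Pe\l czy\'nski step. The selection principle, as usually stated, requires the sequence to be coordinate-wise null, i.e.\ $e^\ast_k\big(T(x_n)\big)\to 0$ for each fixed $k$. The sequence $\{T(x_n)\}$ need not satisfy this: for instance $y_n=e_0+e_n$ in $\ell_1$ is equivalent to $\{e_n\}$, yet no subsequence is a small perturbation of a block basis. Passing to a weak$^\ast$ limit $c\in\ell_1$ and applying B--P to $\{T(x_n)-c\}$ does give a complemented span, but that span is $\overline{\spn}\{T(x_n)-c\}$, not $\overline{\spn}\{T(x_n)\}$, and it is the latter you need in order to define $(T|_Y)^{-1}\circ Q\circ T$. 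The paper avoids this difficulty by citing Theorem~1.4 of \cite{Nicolescu}, which directly yields, from any non-relatively-compact subset of $\ell_1$, a sequence equivalent to $\{e_n\}$ whose closed span is complemented in $\ell_1$; passing afterwards to the subsequence required by Ruess preserves complementation, since under the isomorphism it corresponds to a subsequence of the standard basis. Your argument becomes correct if you replace the B--P invocation by this citation, or by a direct proof of the same $\ell_1$-specific fact.
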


\begin{proof}
%
%
(i)$\Ra$(ii) Assume that $E$ has only the Rosenthal property, and suppose for a contradiction that there is an $\ell_1$-sequence $\{x_n\}_{n\in\w}$ in $A$ such that $L:=\overline{\spn}\{ x_n: n\in\w\}$ is complemented in $E$. Let $S$ be a  projection from $E$ onto $L$, and let $R$ be a linear homeomorphism of $L$ onto a subspace of $\ell_1$ such that $R(x_n)=e_n$ for every $n\in\w$. Then $T:=R\circ S: E\to\ell_1$ is an operator such that $T(A)$ contains $\{ e_n: n\in\w\}$. Therefore $T(A)$ is not relatively compact in $\ell_1$. Thus $A$ is not coarse $1$-limited, a contradiction.
\smallskip

(ii)$\Ra$(i) Suppose for a contradiction that $A$ is not a coarse $1$-limited set. Then there is $T\in\LL(E,\ell_1)$ such that $T(A)$ is not relatively compact in $\ell_1$. By Theorem 1.4 of \cite{Nicolescu}, there is a sequence $\{x_n\}_{n\in\w}$ in $A$ such that the sequence  $S_0=\{T(x_n)\}_{n\in\w}$ is equivalent to the standard unit basis $\{e_n\}_{n\in\w}$ of $\ell_1$ and such that the subspace $H_0:=\overline{\spn}(S_0)$ is a complemented subspace of $\ell_1$. Let $R_0: H_0\to \ell_1$ be a linear topological isomorphism such that $R_0\big( T(x_n)\big)=e_n$ for every $n\in\w$. Since a continuous image of a weakly Cauchy sequence is weakly Cauchy, the sequence $\{x_n\}_{n\in\w}$ has no weakly Cauchy subsequences, and hence, by the Rosenthal property of $E$ (see Theorem \ref{ruess}), there is a subsequence $\{x_{n_k}\}_{k\in\w}$ of $\{x_n\}_{n\in\w}$ which is equivalent to $\{e_k\}_{k\in\w}$. Let $R: \overline{\spn}\{x_{n_k}\}_{k\in\w}\to \ell_1$ be a linear topological isomorphism such that $R\big( x_{n_k}\big)=e_k$ for every $k\in\w$. 
 Observe that the subspace $H_1:= \overline{\spn}\{T(x_{n_k})\}_{k\in\w}$ of $H_0$ satisfies the following two conditions:
\begin{enumerate}
\item[(1)] $H_1$ is complemented in $H_0$ (since $R_0$ is a topological isomorphism, $R_0\big(T(x_{n_k})\big)=e_{n_k}$ and $\cspn\{e_{n_k}\}_{k\in\w}$ is complemented in $\ell_1$), and hence $H_1$ is complemented also in $\ell_1$, and
\item[(2)] $H_1$ is topologically isomorphic to $\ell_1$ (since  $R_0$ is a topological isomorphism and $\cspn\{e_{n_k}\}_{k\in\w}$ is topologically isomorphic to $\ell_1$).
\end{enumerate}
Let $Q:\ell_1\to H_1$ be a projection (so $Q\big(T(x_{n_k})\big)=T(x_{n_k})$ for every $k\in\w$), and let $R_1:H_1\to \ell_1$ be a linear topological isomorphism such that $R_1\big(T(x_{n_k})\big)=e_k$ for every $k\in\w$. Since
\[
R^{-1}\circ R_1\circ Q\circ T(x_{n_k})=R^{-1}\circ R_1\big(T(x_{n_k})\big)=R^{-1}(e_k)=x_{n_k} \;\; \mbox{ for every } k\in\w,
\]
it follows that $R^{-1}\circ R_1\circ Q\circ T$ is a continuous projection from $E$ onto $\overline{\spn}\{x_{n_k}\}_{k\in\w}$ and $\{x_{n_k}\}_{k\in\w}$ is equivalent to $\{e_k\}_{k\in\w}$. But this contradicts (ii).
\smallskip

(i)$\Leftrightarrow$(iii) immediately follows from Proposition \ref{p:char-1-limited}.\qed
\end{proof}

\begin{corollary} \label{c:operator-strict-LF-L1}
Let $E$ be a strict $(LF)$-space which does not contain an isomorphic copy of $\ell_1$ which is complemented in $E$. Then every bounded subset $A$ of $E$ is a coarse $1$-limited and a $(1,\infty)$-limited set.
\end{corollary}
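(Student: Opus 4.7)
The plan is to derive this corollary as a direct consequence of Theorem \ref{t:image-l1}. First I would verify that a strict $(LF)$-space $E$ satisfies the hypotheses of that theorem: strict $(LF)$-spaces are complete (hence locally complete) and their bounded subsets are metrizable when separable (as noted in the remark following Theorem \ref{ruess}). Moreover, strict $(LF)$-spaces are barrelled, so the equivalence (i)$\Leftrightarrow$(iii) in Theorem \ref{t:image-l1} is available, meaning it suffices to show that every bounded $A\subseteq E$ is coarse $1$-limited.

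Next I would verify condition (ii) of Theorem \ref{t:image-l1} under the hypothesis on $E$. Suppose for contradiction that some bounded $A$ contains an $\ell_1$-sequence $\{x_n\}_{n\in\w}$ whose closed linear span $L:=\overline{\spn}\{x_n\}_{n\in\w}$ is complemented in $E$. Since $E$ satisfies the strengthened Rosenthal property stated just after Theorem \ref{ruess}, the canonical isomorphism $R\colon L\to\ell_1$ with $R(x_n)=e_n$ is \emph{onto}. Thus $L$ is topologically isomorphic to $\ell_1$ and complemented in $E$, which contradicts the assumption on $E$.

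Therefore condition (ii) of Theorem \ref{t:image-l1} holds for every bounded subset $A$, and the theorem yields that $A$ is coarse $1$-limited. Invoking the equivalence (i)$\Leftrightarrow$(iii) of the same theorem (available since $E$ is barrelled), we conclude that $A$ is also $(1,\infty)$-limited. The only subtlety to be careful about is the distinction between an abstract $\ell_1$-sequence (where the canonical map is only an embedding onto a subspace of $\ell_1$) and the onto version provided by Ruess's theorem for spaces satisfying Theorem \ref{ruess}; this is exactly what allows the non-containment hypothesis on $E$ (formulated as an isomorphic copy of $\ell_1$) to be applied, and is the only place where any care is needed.
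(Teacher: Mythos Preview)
Your proposal is correct and matches the paper's intended route: the corollary is stated without proof precisely because it follows directly from Theorem~\ref{t:image-l1} once one notes that strict $(LF)$-spaces are locally complete, barrelled, and have metrizable separable bounded sets (as remarked after Theorem~\ref{ruess}). Your careful handling of the ``onto'' refinement of Ruess's theorem---needed to pass from an $\ell_1$-sequence to a complemented isomorphic copy of $\ell_1$---is exactly the point that makes the hypothesis on $E$ applicable.
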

By the classical Pitt theorem \cite[4.49]{fabian-10}, all operators  $\LL(\ell_p,\ell_1)$ ($1<p<\infty$) and $\LL(c_0,\ell_1)$ are compact. Below we generalize this result.
\begin{corollary} \label{c:operator-Banach-l1}
If $E$ is a Banach space containing no an isomorphic copy of $\ell_1$  which is complemented in $E$, then the class of all bounded subsets of $E$ coincides with the class of all coarse $1$-limited sets.  Consequently, every $T\in\LL(E,\ell_1)$ is compact.
\end{corollary}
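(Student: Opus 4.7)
The plan is to deduce the corollary directly from Theorem~\ref{t:image-l1} applied to $E$, noting first that the hypotheses of that theorem are automatically satisfied for Banach spaces: $E$ is complete hence locally complete, and any separable bounded subset of a Banach space is metrizable in the norm topology. So the equivalence (i)$\Leftrightarrow$(ii) of Theorem~\ref{t:image-l1} is available: a bounded subset $A\subseteq E$ is coarse $1$-limited if and only if $A$ contains no $\ell_1$-sequence whose closed linear span is complemented in $E$.

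First I would prove the main assertion. Every coarse $1$-limited subset of $E$ is bounded by (i) of Lemma~\ref{l:coarse-p-limited}, so one inclusion is free. For the converse, let $A\in\Bo(E)$. If $A$ contained an $\ell_1$-sequence $\{x_n\}_{n\in\w}$ with $\cspn\{x_n\}_{n\in\w}$ complemented in $E$, then $\cspn\{x_n\}_{n\in\w}$ would be an isomorphic copy of $\ell_1$ complemented in $E$, contradicting the hypothesis on $E$. Hence, by Theorem~\ref{t:image-l1}, $A$ is coarse $1$-limited. Thus $\Bo(E)=\mathsf{CL}_1(E)$.

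For the last assertion, take any $T\in\LL(E,\ell_1)$. The unit ball $B_E$ is bounded, hence coarse $1$-limited by what we just proved, so by the very definition of coarse $1$-limited set, $T(B_E)$ is a relatively compact subset of $\ell_1$. Therefore $T$ is a compact operator.

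There is no real obstacle here: the entire content is packaged in Theorem~\ref{t:image-l1}, and the only subtlety is the (standard) observation that Banach spaces fall under its hypotheses. The corollary is essentially a translation of that theorem into the special setting where the obstruction to being coarse $1$-limited is globally absent from the space.
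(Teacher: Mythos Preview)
Your proof is correct and is exactly the intended argument: the paper states this corollary without proof, as an immediate consequence of Theorem~\ref{t:image-l1} (Banach spaces being locally complete with metrizable bounded subsets), together with the definition of coarse $1$-limited sets for the compactness conclusion. The only small point worth making explicit is why $\cspn\{x_n\}$ is actually isomorphic to $\ell_1$ (not merely to a subspace of it): in a Banach space this closed span is complete, so its image under the isomorphism $R$ is a closed subspace of $\ell_1$ containing all $e_n$, hence all of $\ell_1$.
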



The condition of being a barrelled space in (iii) of Theorem \ref{t:image-l1} is essential as the following example shows.
\begin{example} \label{exa:1-lim-coarse}
Let $E=(c_0)_p$ be the Banach space $c_0$ endowed with the pointwise topology induced from $\IF^\w$, and let
\[
B=\big\{ (x_n)_{n\in\w}\in E: |x_n|\leq (n+1)^2 \mbox{ for every } n\in\w\big\}.
\]
Then $B$ is a coarse $1$-limited set in $E$ which is not $(1,\infty)$-limited.
\end{example}

\begin{proof}
It is clear that $B$ is a bounded subset of $E$. Therefore, by (ii) of Example 5.4 of \cite{Gab-Pel}, 
$B$ is a coarse $p$-limited set for every $p\in[1,\infty]$. To show that $B$ is not $(1,\infty)$-limited, consider the sequence $\{\chi_n\}_{n\in\w}= \big\{ \tfrac{e^\ast_n}{(n+1)^2}\big\}_{n\in\w}$ in $E'$. In the proof of (i) of Example 5.4 of \cite{Gab-Pel}, 
we showed that  $\{\chi_n\}_{n\in\w}$ is a weak$^\ast$ $1$-summable sequence in $E'$. Since
\[
\sup_{(x_k)\in B} |\langle\chi_n, (x_k)\rangle|=1 \;\; \mbox{ for every } n\in\w,
\]
it follows that $B$ is not a $(1,\infty)$-limited set in $E$.\qed
\end{proof}


\begin{corollary} \label{c:Banach-p-coarse-limited}
Let $E$ be a Banach space. Then:
\begin{enumerate}
\item[{\rm(i)}] if $p=1$, then the class of coarse $1$-limited sets in $E$ coincides with the class of $(1,\infty)$-limited sets;
\item[{\rm(ii)}] if $1<p<\infty$, then the class of coarse $p$-limited sets in $E$ coincides with the class of $p$-$(V^\ast)$ sets.
\end{enumerate}
\end{corollary}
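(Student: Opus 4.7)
The plan is to deduce both parts as direct applications of the general machinery already established in the paper: part (i) from Theorem~\ref{t:image-l1}, and part (ii) from Theorem~\ref{t:p-V*-set}. No fresh ideas are needed; the task is really to verify that a Banach space $E$ satisfies the hypotheses of those two theorems, and then to read off the equivalence.

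For (i), observe first that any Banach space $E$ is complete, hence locally complete, and is norm-metrizable, so in particular every separable bounded subset of $E$ is metrizable. Thus $E$ meets the standing hypotheses of Theorem~\ref{t:image-l1}. Moreover $E$ is barrelled, so the additional assertion ``if in addition $E$ is barrelled, then (i) and (ii) are equivalent to (iii)'' of Theorem~\ref{t:image-l1} applies. The equivalence (i)$\Leftrightarrow$(iii) there says exactly that a bounded subset $A\subseteq E$ is coarse $1$-limited if and only if it is $(1,\infty)$-limited; since every coarse $p$-limited set and every $(p,q)$-limited set is automatically bounded (Lemma~\ref{l:coarse-p-limited}(i) and Lemma~\ref{l:limited-set-1}(ii)), this gives the full equality $\mathsf{CL}_1(E)=\mathsf{L}_{(1,\infty)}(E)$.

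For (ii), I would invoke Theorem~\ref{t:p-V*-set}: for $1<p<\infty$, a quasibarrelled space $E$ whose strong dual $E'_\beta$ is $\ell_\infty$-$V_p$-barrelled has the property that its $p$-$(V^\ast)$ sets are precisely its coarse $p$-limited sets. A Banach space is barrelled and hence quasibarrelled, so the first hypothesis holds trivially; and the strong dual of a Banach space is again a Banach space, which in particular is $\ell_\infty$-$V_p$-barrelled (Banach spaces are one of the standard sources of examples of this weak-barrelledness condition in the cited reference \cite{Gab-Pel}). Feeding this into Theorem~\ref{t:p-V*-set} yields $\mathsf{CL}_p(E)=\mathsf{V}^\ast_p(E)$, which is (ii).

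The only step that might need more than a line of bookkeeping is the verification that $E'_\beta$ is $\ell_\infty$-$V_p$-barrelled for every Banach space $E$; this is where the main work is hidden, but it is precisely the content of the corresponding result in \cite{Gab-Pel} to which Theorem~\ref{t:p-V*-set} appeals, so no new argument is required here.
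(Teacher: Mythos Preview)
Your proposal is correct and follows exactly the paper's own proof, which simply cites Theorem~\ref{t:image-l1} for (i) and Theorem~\ref{t:p-V*-set} for (ii); you have merely made explicit the routine verification that a Banach space satisfies the hypotheses of those two theorems.
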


\begin{proof}
(i) follows from Theorem \ref{t:image-l1}, and (ii) follows from Theorem \ref{t:p-V*-set}.\qed
\end{proof}




To generalize (iii) of Theorem \ref{t:limited-BD} and its extension given  in Proposition 3 of \cite{GalMir}, first we prove the next lemma.
Recall that an lcs $X$ is called {\em injective} if for every subspace $H$ of a locally convex space $E$, each operator $T:H\to X$ can be extended to an operator ${\bar T}: E\to X$.

\begin{lemma} \label{l:injective-Banach-lcs}
Every injective Banach space is also an injective locally convex space.
\end{lemma}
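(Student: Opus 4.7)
The strategy is a standard reduction from the locally convex setting to the Banach setting by factoring through the Banach space associated with a continuous seminorm. Let $X$ be an injective Banach space, let $H$ be a linear subspace of an lcs $E$, and let $T\in\LL(H,X)$. Since $T$ is continuous and $X$ carries its norm topology, there exists a continuous seminorm $p$ on $E$ such that
\[
\|T(h)\|_X \leq p(h) \quad \text{for every } h\in H.
\]
First I would form the normed space $E_p:=E/p^{-1}(0)$ equipped with $\|[x]\|:=p(x)$, let $\pi\colon E\to E_p$ denote the quotient map, and let $\hat E_p$ be the Banach completion of $E_p$.

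Next, set $H_p:=\pi(H)\subseteq E_p$. The estimate above implies that $p(h)=0\Rightarrow T(h)=0$, so the formula $\tilde T([h]):=T(h)$ gives a well-defined bounded linear operator $\tilde T\colon H_p\to X$ of norm at most $1$. Because $X$ is complete, $\tilde T$ extends uniquely by continuity to a bounded operator $\tilde T\colon \overline{H_p}\to X$, where the closure is taken in $\hat E_p$. The key (but trivial) observation is that the inclusion $H_p\hookrightarrow E_p$ is isometric, so $\overline{H_p}$ is genuinely a closed subspace of the Banach space $\hat E_p$ and may be identified with the completion of $H_p$.

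Now I would invoke the hypothesis that $X$ is injective as a Banach space: the bounded operator $\tilde T\colon \overline{H_p}\to X$ defined on a closed subspace of the Banach space $\hat E_p$ admits a bounded extension $S\colon \hat E_p\to X$. Composing with $\pi$ (viewed as a continuous map $E\to \hat E_p$), define $\bar T:=S\circ\pi\colon E\to X$. Then $\bar T$ is continuous as a composition of continuous maps, and for every $h\in H$ we have $\bar T(h)=S(\pi(h))=\tilde T(\pi(h))=T(h)$, so $\bar T$ extends $T$, as required.

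The only nontrivial step is passing from $E$ to the Banach space $\hat E_p$ in a way that carries $H$ to a closed subspace; once this reduction is done, the conclusion is immediate from the Banach-space injectivity of $X$. There is no obstruction coming from $H$ being non-closed in $E$, because after applying $\pi$ and closing in $\hat E_p$ one automatically obtains a closed subspace of a Banach space, which is all that the Banach-space injectivity hypothesis requires.
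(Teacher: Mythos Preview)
Your proof is correct and takes a genuinely different route from the paper's. The paper uses the structural fact that an injective Banach space $X$ is complemented in some $\ell_\infty(\Gamma)$ (a standard Banach-space result), together with the known fact (Jarchow, Proposition 7.4.5) that $\ell_\infty(\Gamma)$ is injective in the category of locally convex spaces; one then extends $I_X\circ T\colon H\to\ell_\infty(\Gamma)$ to all of $E$ and projects back onto $X$. Your argument instead reduces directly to the Banach setting by factoring $T$ through the seminormed quotient $E_p=E/p^{-1}(0)$ and its completion $\hat E_p$, using only that any operator from an lcs into a normed space is dominated by a single continuous seminorm. Your approach is more self-contained: it avoids both the complementation-in-$\ell_\infty(\Gamma)$ theorem and the external citation for lcs-injectivity of $\ell_\infty(\Gamma)$. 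The paper's approach, by contrast, is a bit quicker once those two facts are granted, and it highlights that the lemma is really a corollary of the lcs-injectivity of $\ell_\infty(\Gamma)$.
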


\begin{proof}
Let $X$ be an injective Banach space, $H$ be a subspace of a locally convex space $E$, and let $T:H\to X$ be an operator. It is well known (see for example Exercise 5.27 of \cite{fabian-10}) that there are a set $\Gamma$ and a closed subspace $Y$ of $\ell_\infty(\Gamma)$ such that $X\oplus Y=\ell_\infty(\Gamma)$. Denote by $\pi_X:\ell_\infty(\Gamma)\to X$ the canonical projection, and let $I_X:X\to \ell_\infty(\Gamma)$, $I_X(x):=(x,0)$, be the canonical embedding. Since, by Proposition 7.4.5 of \cite{Jar}, $\ell_\infty(\Gamma)$ is an injective locally convex space, the operator $I_X\circ T:H\to \ell_\infty(\Gamma)$ can be extended to an operator $\overline{I_X\circ T}:E\to \ell_\infty(\Gamma)$. Set ${\bar T}:= \pi_X\circ (\overline{I_X\circ T})$. Then ${\bar T}$ is an operator from $E$ to $X$ such that
\[
{\bar T}(h)=\pi_X\circ \big(I_X\circ T\big)(h)=\pi_X\big((T(h),0)\big)=T(h) \;\;\mbox{ for each $h\in H$}.
\]
Thus ${\bar T}$ extends $T$ and hence $X$ is an injective locally convex space.\qed
\end{proof}

\begin{theorem} \label{t:coarse-p-lim-wsc}
Let $2\leq p\leq\infty$, and let $E$ be a locally convex space with the Rosenthal property. Then every coarse $p$-limited subset of $E$ is  weakly sequentially precompact.
\end{theorem}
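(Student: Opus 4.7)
The plan is to argue by contradiction via Rosenthal's $\ell_1$ theorem. Suppose $A \subseteq E$ is coarse $p$-limited but not weakly sequentially precompact; then there exists a sequence $\{x_n\}_{n\in\w} \subseteq A$ with no weakly Cauchy subsequence. Since $E$ has the Rosenthal property, after passing to a subsequence we may assume $\{x_n\}$ is equivalent to the standard unit basis of $\ell_1$; let $R : M \to \ell_1$ denote the associated topological isomorphism from $M := \cspn\{x_n\}$ onto its image in $\ell_1$, with $R(x_n) = e_n$.

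The core step is to exhibit a continuous operator $T : E \to \ell_p$ (respectively $T : E \to c_0$ when $p = \infty$) for which $T(\{x_n\})$ fails to be relatively compact, contradicting the assumption; the target image to aim for is $T(x_{n_k}) = e_k$ along a subsequence, since $\{e_k\}$ is never relatively compact in $\ell_p$ or $c_0$. To build $T$ I start from the biorthogonal functionals $x_n^* := e_n^* \circ R \in M'$, which form an equicontinuous family on $M$ (controlled by the continuous seminorm $y \mapsto \|R(y)\|_{\ell_1}$). By the seminorm form of Hahn--Banach, I extend to an equicontinuous family $\{\tilde\chi_n\} \subseteq E'$ with $\tilde\chi_n(x_m) = \delta_{nm}$; this places $\{\tilde\chi_n\}$ inside a weak$^\ast$ compact polar $U^\circ$, and on $M$ it satisfies $\tilde\chi_n(y) = (R(y))_n \to 0$ because $R(y) \in \ell_1$. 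Using that on the equicontinuous set $U^\circ$ the weak$^\ast$ topology is governed by pointwise convergence on a dense subspace, I extract a subsequence $\{\tilde\chi_{n_k}\}$ weak$^\ast$ convergent to some $\Phi \in U^\circ$; since the weak$^\ast$ limit on $M$ is zero, $\Phi|_M = 0$, so $\eta_k := \tilde\chi_{n_k} - \Phi$ is equicontinuous, weak$^\ast$ null, and satisfies $\eta_k(x_{n_j}) = \delta_{kj}$.

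For $p = \infty$, the assignment $T(y) := (\eta_k(y))_k$ defines a continuous operator $T : E \to c_0$ (continuity from equicontinuity of $\{\eta_k\}$, image in $c_0$ from weak$^\ast$ nullity), and $T(x_{n_k}) = e_k$ delivers the contradiction directly. For $2 \leq p < \infty$ the sequence $\{\eta_k\}$ must be further strengthened to be weakly $p$-summable in $E'_{w^\ast}$ so that the induced $T$ actually takes values in $\ell_p$. On $M$ this is automatic: for $y = \sum_j a_j x_j \in M$ one has $\eta_k(y) = a_{n_k}$, and $(a_{n_k})_k$ is a subsequence of $(a_j) \in \ell_1 \subseteq \ell_p$. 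For general $y \in E$ a further block-basis-style refinement of the subsequence $\{n_k\}$ (using the continuous inclusion $\ell_1 \hookrightarrow \ell_p$ together with the equicontinuity of $\{\tilde\chi_n\}$) is needed to propagate the $\ell_1$-decay on $M$ to $\ell_p$-summability throughout $E$; this is where the hypothesis $p \geq 2$ enters decisively.

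The principal technical obstacle will be this last step in the $p < \infty$ case: upgrading the weak$^\ast$ null family $\{\eta_k\}$ to a weakly $p$-summable one in $E'_{w^\ast}$ so that the constructed operator is $\ell_p$-valued on all of $E$ and not merely $\ell_\infty$-valued. Once this refinement is in place, the conclusion proceeds uniformly with the $p = \infty$ case via the equality $T(x_{n_k}) = e_k$, producing a non-relatively-compact image and the desired contradiction.
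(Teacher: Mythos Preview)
Your overall contradiction setup and use of the Rosenthal property match the paper, but the heart of the argument --- producing an operator $T:E\to\ell_p$ (or $c_0$) with $T(x_{n_k})=e_k$ --- is where your proposal diverges and breaks down.

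For $2\le p<\infty$ you acknowledge the gap yourself: you need the extended functionals $\{\eta_k\}$ to be weak$^\ast$ $p$-summable in $E'$, not merely weak$^\ast$ null, and you offer only a vague ``block-basis-style refinement.'' There is no such mechanism. An equicontinuous weak$^\ast$ null sequence in $E'$ need not have any subsequence that is weak$^\ast$ $p$-summable for finite $p$: already for $E=c_0$ with $\eta_k=e_k^\ast\in\ell_1$, one has $\eta_k(y)=y_k$, and a generic $y\in c_0$ does not lie in $\ell_p$. So the coordinatewise map $y\mapsto(\eta_k(y))_k$ simply does not take values in $\ell_p$, and no subsequence fixes this. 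Your assertion that ``this is where the hypothesis $p\ge 2$ enters decisively'' is not backed by any argument.

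The paper takes a completely different route to build $T$: it exploits that the formal inclusion $J:\ell_1\to\ell_p$ (or $\ell_1\to c_0$) is $2$-summing when $p\ge 2$, by Grothendieck's theorem combined with the factorization $\ell_1\to\ell_2\to\ell_p$. Every $2$-summing operator factors through an $L_\infty(\mu)$ space, and $L_\infty(\mu)$ is injective as a locally convex space. Hence the composition $H\xrightarrow{P}\ell_1\xrightarrow{R} L_\infty(\mu)$ extends to all of $E$, and postcomposing with $L_\infty(\mu)\to L_2(\mu)\to\ell_p$ yields the desired $T:E\to\ell_p$ with $T(x_n)=e_n$. This is precisely where $p\ge 2$ is used, and it bypasses entirely the need to control individual functionals on $E$.

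A secondary issue: even in the $p=\infty$ case, your extraction of a weak$^\ast$ convergent \emph{subsequence} from $\{\tilde\chi_n\}\subseteq U^\circ$ is unjustified, since $U^\circ$ is weak$^\ast$ compact but not sequentially compact for a general locally convex $E$. You would at best get a cluster point, which does not suffice to make $\eta_k$ weak$^\ast$ null.
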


\begin{proof}
We consider only the case $2\leq p<\infty$ since the case $p=\infty$ can be considered analogously replacing $\ell_p$ by $c_0$.
Suppose for a contradiction that there is a coarse $p$-limited subset $A$ of $E$ which is not weakly sequentially precompact. So there is a sequence $S=\{x_n\}_{n\in\w}$ in $A$ that does not have a weakly Cauchy subsequence. By the Rosenthal property of $E$ and passing to a subsequence if needed, we can assume that $S$ is an $\ell_1$-sequence. Set $H:=\cspn(S)$ and let $P:H\to \ell_1$ be a topological isomorphism of $H$ onto a subspace of $\ell_1$ such that $P(x_n)=e_n$ for every $n\in\w$. Let $J:\ell_1\to \ell_p$, $I_1:\ell_1\to \ell_2$, and $I_2:\ell_2\to \ell_p$ be the natural inclusions, so  $J=I_2\circ I_1$. By the Grothendieck Theorem 1.13 of \cite{DJT}, the operator $I_1$ is $1$-summing. By the Ideal Property 2.4 of \cite{DJT}, $J$ is also $1$-summing, and hence, by the Inclusion Property 2.8 of \cite{DJT}, the operator $J$ is $2$-summing.
By the discussion after Corollary 2.16 of \cite{DJT}, the operator $J$ has a factorization
\[
\xymatrix{
J: \; \ell_1  \ar[r]^R  & L_\infty(\mu) \ar[r]^{J_2^\infty}  & L_2(\mu) \ar[r]^Q & \ell_p },
\]
where $\mu$ is a regular probability measure on some compact space $K$ and $J_2^\infty:L_\infty(\mu) \to L_2(\mu)$  is the natural inclusion. By Theorem 4.14 of \cite{DJT}, the Banach space $L_\infty(\mu)$ is injective. Therefore, by Lemma \ref{l:injective-Banach-lcs},  $L_\infty(\mu)$ is  an injective locally convex space. In particular, the operator $R\circ P:H\to L_\infty(\mu) $ can be extended to an operator $T_\infty:E\to L_\infty(\mu) $. Set $T:=Q\circ  J_2^\infty \circ T_\infty$. Then $T$ is an operator from $E$ to $\ell_p$ such that
\[
T(x_n)=Q\circ  J_2^\infty \circ R\circ P(x_n)=J\circ P(x_n)=e_n \quad \mbox{ for every $n\in\w$}.
\]
Since $A$ and hence also $S$ are coarse $p$-limited sets, (iii) of Lemma \ref{l:coarse-p-limited} implies that the canonical basis $\{e_n\}_{n\in\w}$ of $\ell_p$ is also a coarse $p$-limited set. Therefore $\Id_{\ell_p}\circ T (S)=\{e_n\}_{n\in\w}$ is a relatively compact subset of $\ell_p$, a contradiction.\qed
\end{proof}
It is noticed in \cite[p.~944]{GalMir} that in general Theorem \ref{t:coarse-p-lim-wsc} is not true for $p=1$ even for Banach spaces (in fact, the closed unit ball of $C([0,1])$ is a coarse $1$-limited set which is not  weakly sequentially precompact).

\begin{corollary} \label{c:Banach-lim-wsp}
\begin{enumerate}
\item[{\rm(i)}] If $2\leq p\leq\infty$ and $E$ is a locally convex space with the Rosenthal property, then every $p$-limited subsets of $E$ is weakly sequentially precompact.
\item[{\rm(ii)}] If $E$ is a Banach space and $1\leq p<\infty$, then every $p$-limited subsets of $E$ is relatively weakly {\rm(}sequentially{\rm)}  compact.
\end{enumerate}
\end{corollary}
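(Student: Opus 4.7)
The plan is to reduce both parts to results established earlier in the paper, augmented by classical Banach-space theory.

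For (i), the observation is that Proposition \ref{p:pV*-coarse-p-lim}(iv) shows every $p$-limited set in any locally convex space is coarse $p$-limited. Since $E$ has the Rosenthal property and $2\leq p\leq\infty$, Theorem \ref{t:coarse-p-lim-wsc} then immediately delivers that every $p$-limited subset of $E$ is weakly sequentially precompact.

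For (ii), I would exploit the operator characterization of Theorem \ref{t:pq-limited-summing}. Let $A$ be a $p$-limited subset of a Banach space $E$ with $1\leq p<\infty$. The natural map $T_A:\ell_1^0(A)\to E$ of (\ref{equ:L1-E}) extends, by local completeness of $E$, to a bounded operator $\overline{T_A}:\ell_1(A)\to E$ between Banach spaces. By Theorem \ref{t:pq-limited-summing}, the adjoint $T_A^\ast: E'_{w^\ast}\to\ell_\infty(A)$ is $(p,p)$-convergent, meaning it sends weak$^\ast$ $p$-summable sequences in $E'$ to norm $p$-summable sequences in $\ell_\infty(A)$. Since every weakly $p$-summable sequence in the Banach space $E'_\beta$ is, a fortiori, weak$^\ast$ $p$-summable (via the canonical inclusion $E\subseteq E''$), the operator $T_A^\ast:E'_\beta\to\ell_\infty(A)$ is then $p$-summing in the classical Banach sense. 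Pietsch's theorem (see Theorem 2.17 of \cite{DJT}) asserts that for $1\leq p<\infty$, $p$-summing operators between Banach spaces are weakly compact; so by Gantmacher's theorem, $\overline{T_A}$ itself is weakly compact. Consequently $\overline{T_A}(B_{\ell_1(A)})$ is relatively weakly compact in $E$, and since $a=\overline{T_A}(\delta_a)\in\overline{T_A}(B_{\ell_1(A)})$ for every $a\in A$, the inclusion $A\subseteq\overline{T_A}(B_{\ell_1(A)})$ yields that $A$ is relatively weakly compact. Eberlein--\v{S}mulian then supplies relative weak sequential compactness simultaneously.

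The main obstacle I anticipate is bridging the $(p,p)$-convergent language of the paper (formulated via weak$^\ast$ summability) with the classical $p$-summing theory (formulated via weak summability in the Banach dual). This will be handled by the canonical embedding $E\hookrightarrow E''$: it ensures that weakly $p$-summable sequences in $E'_\beta$ form a subclass of the weak$^\ast$ $p$-summable sequences in $E'$, so that the $(p,p)$-convergent hypothesis is strictly stronger than $p$-summing, thereby allowing Pietsch's theorem to be invoked in the Banach setting.
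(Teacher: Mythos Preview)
For (i), your argument is exactly the paper's.

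For (ii), your route is correct but genuinely different from the paper's. The paper simply invokes Lemma~\ref{l:limited-set-1}(viii) (every $p$-limited set is a $(p,p)$-$(V^\ast)$ set) together with the external reference Theorem~17.19 of \cite{Gab-Pel} (every $(p,p)$-$(V^\ast)$ subset of a Banach space is relatively weakly compact). Your approach instead feeds Theorem~\ref{t:pq-limited-summing} into classical summing-operator theory: from $(p,p)$-convergence of $T_A^\ast:E'_{w^\ast}\to\ell_\infty(A)$ and the inclusion $E\hookrightarrow E''$ you correctly deduce that $\overline{T_A}^{\,\ast}:E'_\beta\to\ell_\infty(A)$ maps weakly $p$-summable sequences to absolutely $p$-summable sequences, which is a standard characterization of $p$-summing operators (you use this equivalence implicitly; it follows from the closed graph theorem or the gliding-hump argument). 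Pietsch and Gantmacher then finish the job. The payoff of your route is that it stays within this paper plus textbook Banach-space theory, avoiding the appeal to the companion preprint; the paper's one-line proof is shorter but depends on that external result.
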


\begin{proof}
(i) Since $p$-limited sets are coarse $p$-limited by Proposition \ref{p:pV*-coarse-p-lim}(iv), the assertion follows from Theorem \ref{t:coarse-p-lim-wsc}.

(ii) immediately follows from Theorem 17.19 of \cite{Gab-Pel} 
(which states that every $(p,p)$-$(V^\ast)$ subset of $E$ is relatively weakly compact) and (viii) of Lemma \ref{l:limited-set-1}.\qed
\end{proof}
Concerning the case $p=\infty$ in (ii) of Corollary \ref{c:Banach-lim-wsp}, we note that if a Banach space $E$ does not contain an isomorphic copy of $\ell_1$, then every limited subset of $E$ is relatively weakly (sequentially)  compact, for the proof see  \cite{BourDies} (an alternative proof is given in Theorem 1.9 of \cite{Ghenciu-15}).

\begin{remark} \label{rem:limited-not-weakly-seq-precompact} {\rm
By Corollary  \ref{c:Banach-lim-wsp}, each limited subset of a Banach space is weakly sequentially precompact. It turns out that for non-Banach spaces, this very useful assertion is not true in general. Indeed, by Example 7.12 of \cite{Gab-Pel}, 
the product $\IR^\mathfrak{c}$ contains a uniformly bounded sequence $S=\{f_n\}_{n\in\w}$ which is a $(p,q)$-$(V^\ast)$ set for all $1\leq p\leq q\leq\infty$ but is not (weakly) sequentially precompact. Since $\IR^\mathfrak{c}$ is reflexive, by (viii)  of Lemma  \ref{l:limited-set-1}, $S$ is also  a $(p,q)$-limited set.\qed}
\end{remark}

\bibliographystyle{amsplain}

\begin{thebibliography}{10}





\bibitem{Alonso}
C. Alonso,  Limited sets in Fr\'{e}chet spaces. Rev. Real Acad. Cienc. Exact. Fs. Natur. Madrid. \textbf{89}  (1995), 53--60.





\bibitem{BG-free}
T. Banakh, S. Gabriyelyan, On free locally convex spaces, Filomat \textbf{36}:18 (2022), 6393--6401.





\bibitem{BG-GP-lcs}
T. Banakh, S. Gabriyelyan, The $b$-Gelfand--Phillips property for locally convex spaces, Collectanea Math. accepted.



\bibitem{Bombal}
F. Bombal, On $(V^\ast)$ sets and Pelczynski's property $(V^\ast)$, Glasgow Math. J. \textbf{32} (1990), 109--120.

\bibitem{BourDies}
J. Bourgain, J. Diestel, {\em Limited operators and strict cosingularity}, Math. Nachr. \textbf{119} (1984), 55--58.






\bibitem{CS}
J.M.F. Castillo, F. Sanchez, Dunford--Pettis-like properties of continuous vector function spaces, Revista Mat. Complut. \textbf{6}:1 (1993), 43--59.

\bibitem{CCDL}
D. Chen, J.A. Ch\'{a}vez-Dom\'{\i}nguez, L. Li, $p$-Converging operators and Dunford--Pettis property of order $p$, J. Math. Anal. Appl. \textbf{461} (2018), 1053--1066.

\bibitem{Diestel}
J. Diestel, \emph{Sequences and Series in Banach Spaces}, GTM \textbf{92}, Springer, 1984.



\bibitem{DJT}
J. Diestel, H. Jarchow, A. Tonge, \emph{Absolutely summing operators}, Cambridge Stud. Adv. Math. 43, Cambridge University Press, 1995.










\bibitem{fabian-10}
M. Fabian, P. Habala, P. H\'{a}jek, V. Montesinos, J. Pelant, V. Zizler, \emph{Banach space theory. The basis for linear and nonlinear analysis}, Springer, New York, 2010.







\bibitem{FZ-p}
J.H. Fourie, E.D. Zeekoei, On weak-star $p$-convergent operators, Quest. Math. \textbf{40} (2017), 563--579.



\bibitem{Gabr-L(X)-Mackey}
S. Gabriyelyan,  The Mackey problem for free locally convex spaces, Forum Math.  \textbf{30} (2018), 1339--1344.


\bibitem{Gabr-free-lcs}
S. Gabriyelyan, Locally convex properties of free locally convex spaces,  J. Math. Anal. Appl. 480 (2019) 123453.

\bibitem{Gabr-free-resp}
S. Gabriyelyan, Locally convex spaces and Schur type properties,  Ann. Acad. Sci. Fenn. Math., \textbf{44} (2019), 363--378.

\bibitem{Gab-Respected}
S. Gabriyelyan, {\em Maximally almost periodic groups and respecting properties}, Descriptive Topology and Functional Analysis II, J.C. Ferrando (ed.), Springer Proceedings in Mathematics \& Statistics,  \textbf{286} (2019), 103--136.


\bibitem{Gab-Pel}
S. Gabriyelyan, Pe{\l}czy\'{n}ski's type sets and Pe{\l}czy\'{n}ski's geometrical properties of locally convex spaces, preprint.

\bibitem{Gab-DP}
S. Gabriyelyan, Dunford--Pettis type  properties of locally convex spaces, submitted.

\bibitem{Gab-GP}
S. Gabriyelyan, Gelfand--Phillips and Dunford--Pettis type  properties of locally convex spaces, preprint.


\bibitem{GKKLP}
S. Gabriyelyan, J.~K{\c{a}}kol, A. Kubzdela, M. Lopez Pellicer,  On topological properties of Fr\'{e}chet locally convex spaces with the weak topology, Topology Appl.  \textbf{192} (2015), 123--137.

\bibitem{GalMir}
P. Galindo, V.C.C. Miranda, A class of sets in a Banach space coarser than limited sets, Bull. Braz. Math. Soc., New series \textbf{53} (2022), 941--955.


\bibitem{Ghenciu-15}
I. Ghenciu, Limited sets and bibasic sequences, Canad. Math. Bull. \textbf{58}:1 (2015), 71--79.

\bibitem{Ghenciu-wGP}
I. Ghenciu, The weak Gelfand--Phillips property in spaces of compact operators, Comment. Math. Univ. Carolin. \textbf{58}:1 (2017). 35--47.

\bibitem{Ghenciu-pGP}
I. Ghenciu, The $p$-Gelfand--Phillips property in spaces of operators and Dunford--Pettis like sets, Acta Math. Hungar. \textbf{155}:2 (2018), 439--457.










\bibitem{Grothen}
A. Grothendieck,  Sur les applications lin\'{e}aires faiblement compactes d'espaces du type $C(K)$, Canad. J. Math. \textbf{5} (1953), 129--173.


\bibitem{Grothendieck}
A. Grothendieck, \emph{Topological vector spaces},  Gordon and Breach, New York-London-Paris, 1973.

\bibitem{HMVZ}
P. H\'{a}jek, V. Montesinos, J. Vanderwerff, V. Zizler, \emph{Biorthogonal systems in Banach spaces}, Springer, 2008.

\bibitem{horvath}
J. Horv\'{a}th, \emph{Topological Vector Spaces and Distributions, I}. Addison-Wesley, Reading, Mass, 1966.


\bibitem{Jar}
H.~Jarchow, \emph{Locally Convex Spaces}, B.G. Teubner, Stuttgart, 1981.


\bibitem{KarnSinha}
A.K. Karn, D.P. Sinha, An operator summability in Banach spaces, Glasgow Math. J. \textbf{56} (2014), 427--437.


\bibitem{LCCD}
L. Li, D. Chen, J.A. Ch\'{a}vez-Dom\'{\i}nguez, Pe{\l}czy\'{n}ski's property $(V^\ast)$ of order $p$ and its quantification, Math. Nachr. \textbf{291} (2018), 420--442.

\bibitem{Lin-Schl-lim}
M. Lindstr\"{o}m, Th. Schlumprecht,  On limitedness in locally convex spaces, Arch. Math. \textbf{53} (1989), 65--74.

\bibitem{Mar}
A.A. Markov,  On free topological groups, Dokl. Akad. Nauk SSSR 31 (1941) 299--301.


\bibitem{NaB}
L. Narici, E. Beckenstein,  \emph{Topological vector spaces}, Second Edition, CRC Press, New York, 2011.

\bibitem{Nicolescu}
C.P. Nicolescu, Weak compactness in Banach lattices, J. Operator Theory, \textbf{9} (1981), 217--213.





\bibitem{PB}
P. P\'{e}rez Carreras, J. Bonet, \emph{Barrelled Locally Convex Spaces}, North-Holland Mathematics Studies \textbf{131}, North-Holland, Amsterdam, 1987.




\bibitem{Pryce}
J.D.~Pryce, A device of R.J.~Whitley's applied to pointwise compactness in spaces of continuous functions, Proc. London Math. Soc. \textbf{23} (1971), 532--546.

\bibitem{Rudin}
W. Rudin, {\em Real and complex analysis}, 3rd. edition, McGraw-Hill, 1987.


\bibitem{ruess}
W. Ruess, Locally convex spaces not containing $\ell_{1}$, Funct. Approx. Comment. Math. \textbf{50} (2014), 351--358.





\bibitem{Schlumprecht-Ph}
T. Schlumprecht, \emph{Limited sets in Banach spaces}, Dissertation, Univ. Munich, 1987.






\end{thebibliography}

\end{document}